\newtheorem{theorem}{Theorem}[section]
\newtheorem{corollary}[theorem]{Corollary}
\newtheorem{lemma}[theorem]{Lemma}
\newtheorem{prop}[theorem]{Proposition}
\theoremstyle{definition}
\newtheorem{definition}[theorem]{Definition}
\newtheorem{example}[theorem]{Example}
\newtheorem*{ack}{Acknowledgments}
\DeclareMathOperator{\im}{im}
\DeclareMathOperator{\id}{id}
\DeclareMathOperator{\ab}{ab}
\DeclareMathOperator{\Prim}{Prim}
\DeclareMathOperator{\gr}{gr}
\DeclareMathOperator{\GL}{GL}
\DeclareMathOperator{\Aut}{Aut}
\DeclareMathOperator{\Inn}{Inn}
\DeclareMathOperator{\IA}{IA}
\newcommand{\N}{\mathbb{N}}
\newcommand{\R}{\mathbb{R}}
\newcommand{\Q}{\mathbb{Q}}
\newcommand{\C}{\mathbb{C}}
\newcommand{\Z}{\mathbb{Z}}
\renewcommand{\k}{\Bbbk}
\newcommand{\g}{\mathfrak{g}}
\newcommand{\h}{\mathfrak{h}}
\newcommand{\fh}{\mathfrak{h}}
\newcommand{\fg}{\mathfrak{g}}
\newcommand{\fM}{\mathfrak{M}}
\newcommand{\fm}{\mathfrak{m}}
\newcommand{\fr}{\mathfrak{r}}
\newcommand{\Lie}{\mathfrak{lie}}
\newcommand{\lie}{\mathfrak{lie}}
\newcommand{\cF}{\mathcal{F}}
\newcommand{\cG}{\mathcal{G}}
\newcommand{\wB}{wB}
\newcommand{\wP}{wP}
\DeclareMathAlphabet{\pazocal}{OMS}{zplm}{m}{n}
\newcommand{\surj}{\twoheadrightarrow}
\newcommand{\inj}{\hookrightarrow}
\newcommand{\abs}[1]{\left| #1 \right|}
\newcommand{\ncom}[1]{\langle\!\langle #1 \rangle\!\rangle}
\newcommand{\com}[1]{[\![ #1 ]\!]}
\newcommand\isom{\xrightarrow{
 \,\smash{\raisebox{-0.6ex}{\ensuremath{\scriptstyle\simeq}}}\,}}
\newcommand{\hdot}{{\raise1.2pt\hbox to0.3em{\Large $\dot$}}}
\begin{document}

\title[Taylor expansions of groups and filtered-formality]%
{Taylor expansions of groups and filtered-formality}

\author[Alexander~I.~Suciu]{Alexander~I.~Suciu$^1$}
\address{Department of Mathematics,
Northeastern University,
Boston, MA 02115, USA}
\email{\href{mailto:a.suciu@northeastern.edu}{a.suciu@northeastern.edu}}
\urladdr{\href{http://web.northeastern.edu/suciu/}%
{http://web.northeastern.edu/suciu/}}
\thanks{$^1$Supported in part by the Simons Foundation 
collaboration grant for mathematicians 354156.}

\author{He Wang}
\address{Department of Mathematics,
Northeastern University,
Boston, MA 02115, USA}
\email{\href{mailto:wanghemath@gmail.com}%
{wanghemath@gmail.com}, \href{mailto:he.wang@northeastern.edu}%
{he.wang@northeastern.edu}}

\subjclass[2010]{Primary
20F40.  
Secondary
16T05,  
16W70, 
17B70,   
20F14,  
20J05,  
55P62.  
}

\keywords{Taylor expansion, Hopf algebra, Chen iterated integrals, 
Malcev Lie algebra, filtered-formality, $1$-formality, residually 
torsion-free nilpotent group, automorphism groups of free groups.}

\dedicatory{To the memory of \c{S}tefan Papadima, 1953--2018}

\begin{abstract} 
Let $G$ be a finitely generated group, and let $\k{G}$ be its group algebra 
over a field of characteristic $0$.  A Taylor expansion is a certain type of map from 
$G$ to the degree completion of the associated graded algebra of $\k{G}$ 
which generalizes the Magnus expansion of a free group. 
The group $G$ is said to be filtered-formal if its Malcev Lie 
algebra is isomorphic to the degree completion of its associated 
graded Lie algebra.   We show that $G$ is filtered-formal if and 
only if it admits a Taylor expansion, and derive some consequences.
\end{abstract}

\maketitle
\setcounter{tocdepth}{1}
\tableofcontents

\section{Introduction}
\label{sect:intro}

\subsection{Expansions of groups} 
\label{intro:exp}

Group expansions were first introduced by Magnus in \cite{Magnus35}, 
in order to show that finitely generated free groups are residually nilpotent. 
This technique has been generalized and used in many ways. 
For instance, the exponential expansion of a free group was used 
to give a presentation for the Malcev Lie algebra 
of a finitely presented group by Papadima \cite{Papadima95} 
and Massuyeau \cite{Massuyeau12}. 
Expansions of pure braid groups and their applications in knot theory 
have been studied since the 1980s by several authors,
see for instance Kohno's papers \cite{Kohno85, Kohno88, Kohno16}.
X.-S. Lin studied in  \cite{Linxiaosong97} expansions of fundamental 
groups of smooth manifolds, using K.T. Chen's theory \cite{Chen77} of formal power 
series connections and their induced monodromy representations.  
More generally, Bar-Natan has explored in \cite{Bar-Natan16}  
the Taylor expansions of an arbitrary ring.

Let $G$ be a finitely generated group, and fix a coefficient 
field $\k$ of characteristic zero. We let ${\gr}(\k{G})$ be the associated 
graded algebra of $\k{G}$ with respect to the filtration by powers of the 
augmentation ideal, and we let 
$\widehat{\gr}(\k{G})$ be the degree completion of this algebra.
Developing an idea from \cite{Bar-Natan16}, we say that a map 
$E\colon G\to \widehat{\gr}(\k{G})$ is a multiplicative expansion 
of $G$ if the induced algebra morphism,  
$\bar{E}\colon \k{G}\to \widehat{\gr}(\k{G})$,  
is filtration-preserving and induces the identity at the associated graded level.
Such a map $E$ is called a \emph{Taylor expansion}\/  if it sends 
each element of $G$ to a group-like element of the Hopf algebra 
$\widehat{\gr}(\k{G})$.

\subsection{Expansions  and filtered-formality} 
\label{intro:exp formal}

Once again, let $G$ be a finitely generated group. 
The concept of filtered-formality relates an object from rational homotopy 
theory to a group-theoretic object. The first object is the Malcev Lie 
algebra $\fm(G,\k)$, defined by Quillen \cite{Quillen69} 
as the set of primitive elements of the 
$I$-adic completion of the group algebra of $G$, where $I$ is the augmentation 
ideal of  $\k{G}$.  This Lie algebra comes endowed with a (complete) filtration 
induced from the natural filtration on  $\widehat{\k{G}}$,  
and is isomorphic to the dual of Sullivan's $1$-minimal model of
a  $K(G,1)$ space.
The second object is the graded Lie algebra $\gr(G,\k)$, defined by taking 
the direct sum of the successive quotients of the lower central series of $G$, 
tensored with $\k$. As shown by Quillen in \cite{Quillen68}, the associated 
graded algebra ${\gr}(\k{G})$ is isomorphic to the universal enveloping 
algebra of $\gr(G,\k)$. 

The group $G$ is called {\em filtered-formal}\/ if its 
Malcev Lie algebra, $\fm(G,\k)$, is isomorphic
to $\widehat{\gr}(G;\k)$, the degree completion of its associated 
graded Lie algebra, as filtered Lie algebras. 
If, in addition, the graded Lie algebra $\gr(G;\k)$ is 
quadratic, the group $G$ is said to be {\em $1$-formal}.
For more details on these notions we refer to 
\cite{PS04-imrn, Papadima-Suciu09, SW-formal} 
and references therein.

The following result, which elucidates the relationship 
between Taylor expansions and formality properties,
is a combination of Theorem \ref{thm:expansionFiltered} and 
Corollary  \ref{cor:expansionFormal}.

\begin{theorem}
\label{thm:intro expansion}
Let $G$ be a finitely generated group.  Then:
\begin{enumerate}
\item \label{tt1}
$G$  is filtered-formal if and only if 
$G$ has a Taylor expansion $G\to \widehat{\gr}(\k{G})$.
\item \label{tt2} $G$ is $1$-formal if and only if
$G$ has a Taylor expansion  and $\gr(\k{G})$ is a quadratic algebra.
\end{enumerate}
\end{theorem}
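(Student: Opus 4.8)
The plan is to establish the two equivalences by carefully unwinding the definitions of Taylor expansion, Malcev Lie algebra, and the associated graded constructions, and exploiting the functoriality of completion together with Quillen's identification of $\gr(\k{G})$ with $U(\gr(G;\k))$. For part \eqref{tt1}, I would first observe that a multiplicative expansion $E\colon G\to\widehat{\gr}(\k{G})$ is the same datum as a filtration-preserving algebra map $\bar{E}\colon \widehat{\k{G}}\to\widehat{\gr}(\k{G})$ inducing the identity on associated graded objects; such a map is automatically a filtered isomorphism, since a filtered map inducing an isomorphism at the graded level is an isomorphism on completions. The Taylor (group-like) condition then says precisely that $\bar{E}$ is a morphism of complete Hopf algebras, because the group-like elements of $\widehat{\k{G}}$ topologically generate it and conversely a filtered Hopf algebra isomorphism carries group-likes to group-likes. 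So a Taylor expansion exists if and only if there is an isomorphism of complete Hopf algebras $\widehat{\k{G}}\isom\widehat{\gr}(\k{G})$, and passing to primitives this is equivalent to an isomorphism of filtered Lie algebras $\fm(G,\k)=\Prim(\widehat{\k{G}})\isom\Prim(\widehat{\gr}(\k{G}))$.

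The remaining point for \eqref{tt1} is to identify $\Prim(\widehat{\gr}(\k{G}))$ with $\widehat{\gr}(G;\k)$ as filtered Lie algebras, which follows from Quillen's theorem $\gr(\k{G})\cong U(\gr(G;\k))$ combined with the fact that completing a graded connected Hopf algebra of the form $U(\fL)$ yields a complete Hopf algebra whose primitives are the degree completion $\widehat{\fL}$ (the Milnor--Moore / Poincaré--Birkhoff--Witt picture survives completion since everything is filtered by degree and finite-dimensional in each degree, using that $G$ is finitely generated). Matching the filtrations on both sides — the $I$-adic/lower-central-series filtration on $\fm(G,\k)$ versus the degree filtration on $\widehat{\gr}(G;\k)$ — requires knowing that the lower central series filtration on $\fm(G,\k)$ induces the lower central series filtration on $\gr(G;\k)$, which is standard. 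Putting these together gives: Taylor expansion exists $\iff$ $\fm(G,\k)\cong\widehat{\gr}(G;\k)$ as filtered Lie algebras $\iff$ $G$ is filtered-formal.

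For part \eqref{tt2}, I would invoke part \eqref{tt1}: $G$ is $1$-formal means $G$ is filtered-formal and in addition $\gr(G;\k)$ is quadratic. By \eqref{tt1} the filtered-formality is equivalent to the existence of a Taylor expansion, so it only remains to check that quadraticity of $\gr(G;\k)$ as a Lie algebra is equivalent to quadraticity of $\gr(\k{G})$ as an associative algebra. This is again a consequence of $\gr(\k{G})\cong U(\gr(G;\k))$: the universal enveloping algebra of a quadratic Lie algebra is a quadratic associative algebra, and conversely one recovers quadraticity of the Lie algebra from that of its enveloping algebra by looking at degrees one and two of the relations and using the PBW decomposition to separate the Lie part from the symmetric part. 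So \eqref{tt2} reduces to \eqref{tt1} plus this purely algebraic translation.

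The main obstacle I anticipate is not any single hard idea but the bookkeeping around completions: one must be careful that $\widehat{\gr}(\k{G})$ genuinely is a complete Hopf algebra, that group-like and primitive elements behave well under degree completion (this is where finite generation of $G$, hence finite-dimensionality of each graded piece, is essential), and that the two a priori different filtrations one can put on each side — the multiplicative/$I$-adic one coming from the group algebra and the grading-induced one — actually agree. Establishing the dictionary "complete Hopf algebra isomorphism $\leftrightarrow$ filtered Lie algebra isomorphism of primitives $\leftrightarrow$ filtered-formality" cleanly, with all filtrations correctly aligned, is the technical heart; once that is in place both equivalences fall out.
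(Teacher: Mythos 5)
Your proposal is correct and follows essentially the same route as the paper: the chain ``Taylor expansion $\leftrightarrow$ filtered Hopf algebra isomorphism $\widehat{\k G}\cong\widehat{\gr}(\k G)$ $\leftrightarrow$ filtered Lie algebra isomorphism of primitives $\fm(G,\k)\cong\widehat{\gr}(G,\k)$ $\leftrightarrow$ filtered-formality'' is exactly Propositions \ref{prop:Taylor} and \ref{prop:TaylorExp}, Theorem \ref{thm:expansionFiltered}, and Corollary \ref{cor:TaylorFilteredFormal}, and your reduction of part (2) to part (1) plus the equivalence of quadraticity of $\gr(G,\k)$ with that of $U(\gr(G,\k))\cong\gr(\k G)$ is the paper's Corollary \ref{cor:expansionFormal} (which cites Lee for that last equivalence). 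The only cosmetic difference is that the paper explicitly normalizes an arbitrary filtered Hopf isomorphism to one inducing the identity on the associated graded level via $(\widehat{\gr}(\phi))^{-1}\circ\phi$, a step you leave implicit.
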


Combining this theorem with our results on filtered-formality from \cite{SW-formal}, 
we conclude that the following propagation property of Taylor expansions holds. 
This is a combination of Theorems \ref{thm:Taylorpropagation} and \ref{thm:nilp-taylor}.

\begin{prop}
\label{prop:intro propagation}
The existence of a Taylor expansion  is preserved under 
field extensions, and taking finite products and coproducts, 
split injections, nilpotent quotients or solvable quotients of groups. 
\end{prop}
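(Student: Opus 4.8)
The plan is to deduce each clause from the corresponding stability property of filtered-formality, via part~\ref{tt1} of Theorem~\ref{thm:intro expansion}. Since for a finitely generated group ``admits a Taylor expansion'' and ``is filtered-formal'' are equivalent, it suffices to know that filtered-formality is preserved under field extensions, finite direct products and free products, split injections, and passage to the canonical nilpotent and solvable quotients; all of these are available from \cite{SW-formal}. Rather than invoke this black box, however, I would prefer to transport the Taylor expansions directly, since the constructions are short and expose the mechanism: the tools are the functoriality of $G\mapsto\widehat{\gr}(\k G)$ and the compatibility of its complete Hopf algebra structure --- in particular its set of group-like elements --- with each of these operations.

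For a field extension $\k\subseteq\k'$ one uses $\gr(\k' G)\cong\gr(\k G)\otimes_\k\k'$, compatibly with completion and coproduct, so a Taylor expansion over $\k$ extends $\k'$-linearly. For $G=G_1\times G_2$, the Hopf algebra isomorphism $\k[G_1\times G_2]\cong\k G_1\otimes_\k\k G_2$ yields $\widehat{\gr}(\k G)\cong\widehat{\gr}(\k G_1)\,\widehat{\otimes}\,\widehat{\gr}(\k G_2)$, and $(g_1,g_2)\mapsto E_1(g_1)\otimes E_2(g_2)$ is a multiplicative, group-like--valued map inducing the identity on the associated graded. For $G=G_1*G_2$, I would first check that $\widehat{\gr}(\k[G_1*G_2])$ is the coproduct of $\widehat{\gr}(\k G_1)$ and $\widehat{\gr}(\k G_2)$ in complete Hopf algebras, and then use the universal property of the free product to assemble $E_1$ and $E_2$ into a homomorphism into the group-like elements of the coproduct. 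For a split injection $\iota\colon H\hookrightarrow G$ with retraction $\rho$, take $E_H=\widehat{\gr}(\rho)\circ E_G\circ\iota$: upon linearizing, this is a filtration-preserving algebra map inducing $\gr(\rho)\circ\gr(\iota)=\id$ on the associated graded, and it is group-like--valued because $\widehat{\gr}(\rho)$ is a morphism of Hopf algebras. In each case one then verifies the three defining properties of a Taylor expansion, which is routine.

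The case with genuine content is that of quotients. For $\pi\colon G\twoheadrightarrow G/\Gamma_k G$, the first step is to identify $\gr(\k[G/\Gamma_k G])$ with the quotient of $\gr(\k G)$ by the Hopf ideal generated by the components of $\gr(G)$ in degrees $\ge k$ --- combining Quillen's identification $\gr(\k G)\cong U(\gr G)$ \cite{Quillen68} with the standard truncation $\gr(G/\Gamma_k G)\cong\gr(G)/\bigoplus_{i\ge k}\gr_i(G)$; for the solvable quotient $G/G^{(k)}$ the corresponding identification of associated graded Lie algebras is the one established in \cite{SW-formal}. Granting this, the resulting morphism of graded Hopf algebras $\bar\pi\colon\gr(\k G)\twoheadrightarrow\gr(\k[G/\Gamma_k G])$, composed after completion with a Taylor expansion $E$ of $G$, gives a multiplicative, group-like--valued map $G\to\widehat{\gr}(\k[G/\Gamma_k G])$ that annihilates $\Gamma_k G$ (its associated Lie algebra map annihilates $\Gamma_k\fm(G)$), hence descends to $G/\Gamma_k G$. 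It then remains to check that the descended map still induces the identity on the associated graded. I expect the main obstacle to lie precisely here: the identification of the associated graded algebra of a solvable quotient, together with this last compatibility check, are the only points that require more than formal Hopf-algebraic bookkeeping, and both rest on the machinery of \cite{SW-formal}.
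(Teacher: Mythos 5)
Your opening paragraph is precisely the paper's proof: Proposition~\ref{prop:intro propagation} is obtained there by combining the equivalence ``Taylor expansion $\Leftrightarrow$ filtered-formal'' (Corollary~\ref{cor:TaylorFilteredFormal}, or its refined form Theorem~\ref{thm:expansionFiltered}) with the propagation results for filtered-formality imported from \cite{SW-formal} (Theorem~\ref{thm:propagation} for split injections, field extensions, products and coproducts; Theorem~\ref{thm:DerivedQuotientMalcevISO} for solvable quotients; and, for nilpotent quotients, the transport of the Lie algebra isomorphism $\fm(G;\k)\to\widehat{\gr}(G;\k)$ through the proof of \cite[Theorem 7.13]{SW-formal} as in Theorem~\ref{thm:nilp-taylor}). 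So the statement is proved the moment you write that paragraph. Your alternative, ``transport the expansion directly,'' is a genuinely different and in places cleaner route: the split-injection argument $E_H=\widehat{\gr}(\rho)\circ E_G\circ\iota$ with $\gr(\bar\rho)\circ\gr(\bar\iota)=\gr(\overline{\rho\iota})=\id$ is complete as stated, and the product and nilpotent-quotient constructions go through essentially as you describe (for the latter, $\gr(\bar E_k)=\id$ follows from surjectivity of $\gr(\k G)\to\gr(\k[G/\Gamma_kG])$ and commutativity of the evident square, so the ``remaining check'' you worry about is in fact routine). What the direct route buys is an explicit formula for the induced expansion; what the paper's route buys is brevity and, crucially, coverage of the one clause your direct method cannot reach: for field extensions you only construct the easy ascent $\k\rightsquigarrow\k'$ via $\gr(\k'G)\cong\gr(\k G)\otimes_\k\k'$, whereas the content of Theorem~\ref{thm:Taylorpropagation}\eqref{item:fieldextensionT} is an equivalence whose substantive direction is descent (a Taylor expansion over $\C$ yields one over $\Q$); that direction has no $\k'$-linear-extension proof and genuinely requires the descent argument for filtered-formality in \cite[Theorem 6.6]{SW-formal}. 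Likewise the identification $\gr(G/G^{(i)};\k)\cong\gr(G;\k)/\gr(G;\k)^{(i)}$ needed for solvable quotients is not formal bookkeeping but is exactly Theorem~\ref{thm:DerivedQuotientMalcevISO}, valid because $G$ is already known to be filtered-formal; you correctly flag both points, so nothing in your write-up is wrong, but the direct route cannot be made self-contained at those two places.
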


In particular, if a finitely generated group $G$ 
has a Taylor expansion over $\C$, then it also has a Taylor expansion 
over $\Q$.

 \subsection{Residual properties and Taylor expansions} 
 \label{intro:rtfn-intro}

A group $G$ is said to be \emph{residually torsion-free nilpotent}\/ 
if any non-trivial element of $G$ can be detected in a 
torsion-free nilpotent quotient.  If $G$ is finitely generated, 
this condition is equivalent to the injectivity of the canonical 
map to the Malcev group completion, $\kappa\colon G\to \fM(G,\k)$. 
An expansion $\bar{E}\colon\k G\to \widehat{\gr}(\k G)$ is said to be 
{\em faithful}\/ if the map $E\colon G\to \widehat{\gr}(\k{G})$ is injective.  

The next proposition relates the property 
of being residually torsion-free nilpotent to the existence of 
a faithful Taylor expansion.  
 \begin{prop}
\label{prop:fftaylor-intro}
A finitely generated group $G$ has a faithful Taylor expansion 
if and only if $G$ is residually torsion-free nilpotent and filtered-formal. 
\end{prop}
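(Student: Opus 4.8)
The plan is to reduce the statement to Theorem~\ref{thm:intro expansion}, part~\eqref{tt1}, together with the standard dictionary, recalled in Section~\ref{intro:rtfn-intro}, between residual torsion-free nilpotence and the $I$-adic completion of the group algebra. The starting observation is purely formal: a multiplicative expansion $\bar{E}\colon \k G\to\widehat{\gr}(\k G)$, being filtration-preserving and inducing the identity on associated graded algebras, passes to an isomorphism $\widehat{\bar{E}}\colon \widehat{\k G}\isom\widehat{\gr}(\k G)$ of complete filtered algebras (a filtration-preserving algebra map that is an isomorphism on associated graded becomes an isomorphism after completion, and the target is already complete). Since $\bar{E}$ is the linear extension of $E$, its restriction to group elements is again $E$, so the triangle $E=\widehat{\bar{E}}\circ j$ commutes, where $j\colon G\to\widehat{\k G}$ is the canonical map. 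On the other side, for a finitely generated group the map $j$ is injective exactly when $G$ is residually torsion-free nilpotent: this is the equivalence recalled in Section~\ref{intro:rtfn-intro} between residual torsion-free nilpotence and injectivity of $\kappa\colon G\to\fM(G,\k)$, transported inside $\widehat{\k G}$ by means of the rational dimension-subgroup theorem, which identifies $G\cap(1+I^n)$ with the $n$-th rational dimension subgroup and hence $\ker j$ with the intersection of all of them.

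For the ``if'' direction, assume $G$ is filtered-formal and residually torsion-free nilpotent. By Theorem~\ref{thm:intro expansion}, part~\eqref{tt1}, $G$ admits a Taylor expansion $E\colon G\to\widehat{\gr}(\k G)$. Factoring $E=\widehat{\bar{E}}\circ j$ as above, and using that $j$ is injective (residual torsion-free nilpotence) while $\widehat{\bar{E}}$ is a bijection, we conclude that $E$ is injective; that is, $E$ is a faithful Taylor expansion.

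For the ``only if'' direction, suppose $E\colon G\to\widehat{\gr}(\k G)$ is a faithful Taylor expansion. Since $G$ then admits a Taylor expansion at all, Theorem~\ref{thm:intro expansion}, part~\eqref{tt1}, gives that $G$ is filtered-formal. Writing once more $E=\widehat{\bar{E}}\circ j$ with $\widehat{\bar{E}}$ invertible, injectivity of $E$ forces $j$ to be injective, so $G$ is residually torsion-free nilpotent. Combining the two implications yields the claimed equivalence.

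Essentially all of the substance is packaged into Theorem~\ref{thm:intro expansion}, part~\eqref{tt1}, whose proof manufactures a group-like expansion out of a filtered Lie algebra isomorphism $\fm(G,\k)\cong\widehat{\gr}(G;\k)$, relying in turn on Quillen's identification $\gr(\k G)\cong U(\gr(G;\k))$. Granting that input, the only genuinely new points here are the compatibility identity $E=\widehat{\bar{E}}\circ j$ and the translation of ``residually torsion-free nilpotent'' into ``$j$ injective''; the former is a direct unwinding of the definition of a multiplicative expansion, and the latter is the rational dimension-subgroup theorem over a field of characteristic zero. The one thing to be careful about — and what I would regard as the main (modest) obstacle — is confirming that the completed expansion is genuinely an isomorphism of filtered algebras intertwining $j$ with $E$; once this is in place, the group-like (Hopf-algebraic) nature of a Taylor expansion plays no further role in the argument, since faithfulness has become a question about the filtered algebra map $\bar{E}$ alone.
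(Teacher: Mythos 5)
Your argument is correct and follows essentially the same route as the paper's own proof: both reduce to the equivalence ``filtered-formal $\Leftrightarrow$ Taylor expansion exists,'' factor $E=\widehat{E}\circ\kappa$ through the canonical map $\kappa\colon G\to\widehat{\k G}$ with $\widehat{E}$ an isomorphism of complete (Hopf) algebras, and then invoke the standard identification of injectivity of $\kappa$ with residual torsion-free nilpotence. Your closing remark that the group-like condition is not needed for the faithfulness step is accurate and consistent with how the paper's Proposition~\ref{prop:Taylor} is actually proved.
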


The work of Magnus \cite{Magnus35,Magnus-K-S} shows that all the 
free groups $F_n$ are residually torsion-free nilpotent (RTFN). 
Furthermore, as shown by Hain \cite{Hain97} and Berceanu--Papadima 
\cite{Berceanu-Papadima09}, the Torelli groups  
$\IA_n=\ker(\Aut(F_n)\to \Aut((F_n)_{\ab}))$ 
are also RTFN.   
Consequently, all the subgroups of $\IA_n$, for instance, 
the pure braid group $P_n$, the McCool group $wP_n$, and the 
upper McCool group $wP_n^+$, inherit this property.

Let $\Pi_n$ be the direct product of the free groups $F_{1}, \dots, F_{n-1}$. 
The graded Lie algebras $\gr(P_n,\k)$, $\gr(\Pi_n,\k)$ and $\gr(\wP^+_n,\k)$
are isomorphic as vector spaces. Hence, their universal enveloping algebras,
which are domains for the Taylor expansions of $P_n$, $\Pi_n$, and $\wP^n$,
are also isomorphic as vector spaces. The next proposition shows that
they are not isomorphic as algebras.  
 
\begin{prop}
\label{prop:theta-intro}
For each $n\geq 4$, the graded Lie algebras $\gr(P_n,\k)$, $\gr(\Pi_n,\k)$, and 
$\gr(\wP^+_n,\k)$ are pairwise non-isomorphic.  
\end{prop}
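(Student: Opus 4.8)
The plan is to separate the three graded Lie algebras by computing, for each, the sequence of \emph{Chen ranks} $\theta_k(G)=\dim_\k\gr_k(G/G'';\k)$. I first record the common features. Each of $P_n$, $\Pi_n$, $\wP^+_n$ is an iterated almost-direct product of finitely generated free groups --- for $P_n$ this is the Fadell--Neuwirth/Falk--Randell fibration, for $\wP^+_n$ the analogous ``forget the last strand'' fibration, and for $\Pi_n$ it is a genuine direct product --- so all three groups are $1$-formal (by \cite{SW-formal}), their associated graded Lie algebras are the respective holonomy Lie algebras, and, as noted above, they are isomorphic to one another as graded vector spaces. Since the three groups are $1$-formal, $\gr(G/G'';\k)\cong\gr(G;\k)/\gr(G;\k)''$ (see \cite{Papadima-Suciu09}); hence each $\theta_k(G)$ is an isomorphism invariant of the graded Lie algebra $\gr(G;\k)$, and it suffices to prove that the three sequences $(\theta_k(P_n))$, $(\theta_k(\Pi_n))$, $(\theta_k(\wP^+_n))$ are pairwise distinct for every $n\ge4$.

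For $\Pi_n$ the computation is elementary. Because $(G_1\times G_2)''=G_1''\times G_2''$ and lower central series commute with finite direct products, $\theta_k(G_1\times G_2)=\theta_k(G_1)+\theta_k(G_2)$ for all $k$; combined with Chen's formula $\theta_k(F_m)=(k-1)\binom{m+k-2}{k}$ (for $k\ge2$), this gives
\[
\theta_k(\Pi_n)=\sum_{m=2}^{n-1}(k-1)\binom{m+k-2}{k}\qquad(k\ge 2),
\]
which for $k\gg0$ agrees with a polynomial in $k$ of degree $n-2$ and leading coefficient $1/(n-3)!$. (Equivalently, $\mathcal{R}_1(\Pi_n)=\bigcup_{m=2}^{n-1}H^1(F_m;\k)$, with one irreducible component of each dimension $2,3,\dots,n-1$.) For $P_n$ I would quote the Cohen--Suciu computation of the Chen groups of the pure braid group: $\theta_k(P_n)=(k-1)\binom{n+1}{4}$ for $k\gg0$, which is linear in $k$ for every $n$ --- reflecting the fact that every component of $\mathcal{R}_1(P_n)$, the $\binom n3$ local ones and the $\binom n4$ supported on $\mathrm{Br}_4$-subarrangements, has dimension $2$.

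It remains to carry out the analogous analysis for the upper McCool group. Starting from McCool's presentation of the basis-conjugating automorphisms of $F_n$ and passing to the upper-triangular generators, one obtains a finite presentation of $\wP^+_n$, hence a quadratic presentation of the holonomy Lie algebra $\gr(\wP^+_n;\k)$; from it one computes the metabelian quotient $\gr(\wP^+_n;\k)/\gr(\wP^+_n;\k)''$ --- equivalently, the resonance variety $\mathcal{R}_1(\wP^+_n)$, which is a finite union of linear subspaces since $\wP^+_n$ is $1$-formal --- and reads off the sequence $(\theta_k(\wP^+_n))$. The proposition then follows by comparison: for every $n\ge4$ one has $\deg_k\theta_k(\Pi_n)=n-2\ge2>1=\deg_k\theta_k(P_n)$, so $\gr(\Pi_n;\k)\not\cong\gr(P_n;\k)$; and the pairs involving $\wP^+_n$ are separated by whichever of the degree of $\theta_k$, its leading coefficient, or its first few values $\theta_2,\theta_3,\dots$ is needed.

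The hard part is exactly this last computation --- extracting $\mathcal{R}_1(\wP^+_n)$, equivalently the Chen ranks of $\wP^+_n$, from the presentation --- together with checking that the result differs from both $(\theta_k(P_n))$ and $(\theta_k(\Pi_n))$ for \emph{all} $n\ge4$ and not merely asymptotically. The case $n=4$ warrants separate attention, since $\deg_k\theta_k(\Pi_4)=2$ may coincide with $\deg_k\theta_k(\wP^+_4)$; there one compares leading coefficients or low-degree terms, falling back if necessary on the explicit initial values $\theta_\bullet(\Pi_4)=(6,4,10,18,\dots)$ and $\theta_\bullet(P_4)=(6,4,10,15,\dots)$.
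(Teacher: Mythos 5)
Your overall strategy is exactly the paper's: for filtered-formal groups the Chen ranks $\theta_k(G)=\dim_\k\gr_k(G/G'';\k)$ are invariants of the graded Lie algebra $\gr(G;\k)$ (this is the content of Theorem \ref{thm:Chenobstruction} and Corollary \ref{cor:noniosLie}, via $\gr(G/G'';\k)\cong\gr(G;\k)/\gr(G;\k)''$), so it suffices to exhibit a $k$ where the three sequences differ. Your treatment of $\Pi_n$ (additivity of $\theta_k$ over direct products plus Chen's formula, giving $\theta_4(\Pi_n)=3\binom{n+2}{5}$) and of $P_n$ (the Cohen--Suciu computation $\theta_k(P_n)=(k-1)\binom{n+1}{4}$ for $k\ge 3$) is correct and matches the paper.

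The genuine gap is the third group: you never actually compute, or even quote, the Chen ranks of $\wP_n^+$, and you explicitly defer this as ``the hard part.'' But that computation \emph{is} the proof for two of the three pairs; without it the pairs $(\gr(P_n),\gr(\wP_n^+))$ and $(\gr(\Pi_n),\gr(\wP_n^+))$ are not separated, and your proposed fallbacks (degree, leading coefficient, or ``first few values'') are not carried out even for $n=4$ --- your explicit lists give $\theta_\bullet(\Pi_4)$ and $\theta_\bullet(P_4)$ but not $\theta_\bullet(\wP_4^+)$. Extracting $\mathcal{R}_1(\wP_n^+)$ or the Chen ranks from McCool's presentation is a substantial piece of work, done in \cite{SW-mccool}; the paper simply cites that result, $\theta_k(\wP_n^+)=\binom{n+1}{4}+\sum_{i=3}^k\binom{n+i-2}{i+1}$ for $k\ge 3$ (Theorem \ref{thm:ChenRanksLCSRanks}), and then compares at the single value $k=4$: one gets $\theta_4(P_n)=3\binom{n+1}{4}$, $\theta_4(\wP_n^+)=2\binom{n+1}{4}+\binom{n+2}{5}$, and $\theta_4(\Pi_n)=3\binom{n+2}{5}$, which are pairwise distinct for all $n\ge 4$ precisely because $\binom{n+1}{4}\neq\binom{n+2}{5}$ once $n\neq 3$. (Note also that your degree heuristic alone would not finish the job: both $\theta_k(\Pi_n)$ and $\theta_k(\wP_n^+)$ grow like polynomials of degree $n-2$ in $k$, so for that pair you would in any case need leading coefficients or explicit values.) To complete your argument, either cite the $\wP_n^+$ computation from \cite{SW-mccool} and \cite{Cohen-Schenck15} or supply it; everything else in your write-up is sound.
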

  
 \subsection{Braid-like groups and further directions} 
\label{subsec:motivation} 
Explicit Taylor expansions have been constructed for 
several classes of filtered-formal groups, including  
finitely generated free groups, free abelian groups, surface groups,
the pure braid groups, and the McCool groups. 

When $G$ is the fundamental group of a smooth manifold $M$,
an important construction for a Taylor expansion arises from Chen's theory of 
formal power series connections and their induced monodromy representations.  
Using this technique, Kohno \cite{Kohno88, Kohno16} gave explicit 
Taylor expansions for the pure braid groups $P_n$.  Using a completely 
different approach, Papadima constructed in \cite{Papadima02} 
 {\em integral}\/ Taylor expansions for the braid groups $B_n$. 
In another direction,  
Hain studied expansions for link groups \cite{Hain85}, fundamental groups 
of algebraic varieties \cite{Hain86}, and the Torelli groups \cite{Hain97}, 
while Lin \cite{Linxiaosong97} further investigated the relationship between 
expansions and link invariants, including Vassiliev invariants, Milnor's link 
variants and the Kontsevich integral.  

There is also a strong interplay between Taylor expansions of 
the pure braid groups and the finite-type (or Vassiliev) 
invariants in knot theory.  In this context, the relevant formal power series 
connection is a version of the Knizhnik--Zamolodchikov connection. 
The Taylor expansions of the groups constructed from Chen's theory of 
formal power series connections yield finite-type invariants for pure braids, 
and provide a prototype for the Kontsevich integral for knots.
For more on all of this, we refer the reader to \cite{Habegger-Massbaum, 
Mostovoy-Willerton, Papadima02,Linxiaosong97, Bar-Natan95, Bar-Natan16}.
 
\section{Hopf algebras and expansions of groups}
\label{sec:expansion}

\subsection{Group algebras, completions, and associated graded algebras}
\label{subsec:grkg}

Let  $G$ be a finitely generated group, and let $\k{G}$ 
be its group algebra over a field $\k$.
Let $\varepsilon\colon \k{G}\to \k$ be the augmentation 
homomorphism, defined by $\varepsilon(g)=1$ for all $g\in G$. 
The powers of the augmentation ideal, $I=\ker(\varepsilon)$, 
define the $I$-adic filtration on the group algebra, $\{I^k\}_{k\ge 0}$. 
This filtration is multiplicative, in the sense that 
$I^k\cdot I^{\ell}\subset I^{k+\ell}$. 
The corresponding completion, 
\begin{equation}
\label{eq:hat-kg}
\widehat{\k{G}}=\varprojlim_k \k{G}/I^k,
\end{equation} 
comes equipped with the inverse limit filtration,
$\{\widehat{I^k}\}_{k\ge 0}$.  The multiplication in $\k{G}$ 
extends to a multiplication in $\widehat{\k{G}}$, compatible 
with this filtration. 

On the other hand, the associated graded group, 
\begin{equation}
\label{eq:grkG}
{\gr}(\k{G})=\bigoplus_{k\geq 0}I^k/I^{k+1},
\end{equation}
is a graded algebra, with multiplication inherited from the 
product in $\k{G}$. 
This algebra comes endowed 
with the degree filtration, 
$\cF_k(\gr(\k{G}))=\bigoplus_{j\geq k}I^j/I^{j+1}$. 
The completion of $\gr(\k{G})$ with respect to this filtration, 
\begin{equation}
\label{eq:completegr}
\widehat{\gr}(\k{G})=\prod_{k\geq 0}I^k/I^{k+1},
\end{equation}
comes endowed with the inverse limit filtration, 
\begin{equation}
\label{eq:completefil}
\widehat{\cF}_k(\widehat{\gr}(\k{G}))=\prod_{j\geq k}I^j/I^{j+1}.
\end{equation}
The associated graded algebra of $\widehat{\gr}(\k{G})$ 
is canonically identified with $\gr(\k{G})$.

For example, if $G=F_n$ is a free group of rank $n\ge 1$, 
then $\gr(\k{G})$ is the tensor $\k$-algebra on $n$ generators 
$t_i$ while the completion $\widehat{\gr}(\k{G})$ is the power series 
ring in $n$ non-commuting variables $x_i=t_i-1$.

\subsection{Hopf algebras}
\label{subsec:hopf}

A {\em Hopf algebra}\/ is an associative and coassociative bialgebra 
over a field $\k$, with multiplication $\nabla\colon A\otimes A\to A$, 
comultiplication $\Delta\colon A\to A\otimes A$, unit 
$\eta\colon \k\to A$, and counit $\varepsilon \colon A\to \k$, 
endowed with a $\k$-linear map $T\colon A\to A$ 
(called the antipode), such that the following diagram commutes:
\[
\xymatrixrowsep{24pt}
\xymatrixcolsep{14pt}
\xymatrix{
& A\otimes A \ar^{T\otimes \id}[rr] && A\otimes A \ar^{\nabla}[dr]\\
A \ar^{\varepsilon}[rr]  \ar^{\Delta}[ur] \ar_{\Delta}[dr]&& \k \ar^{\eta}[rr]  && A\\
& A\otimes A \ar^{\id\otimes T}[rr] && A\otimes A \ar_{\nabla}[ur]
}
\]
 
An element $x\in A$  is called {\em group-like}\/ if 
$\Delta(x)=x\otimes x$, and it is called {\em primitive}\/ if 
$\Delta x=x {\otimes} 1+ 1{\otimes} x$.
The set of group-like elements of $A$ 
form a group, with multiplication inherited from $A$ and 
inverse given by the antipode, 
while the set of primitive elements of $A$ form a Lie algebra, 
with Lie bracket $[x,y]=\nabla(x,y)-\nabla(y,x)$.

For instance, if $\fg$ is a Lie algebra, then its universal enveloping algebra, 
$U(\fg)$, is a Hopf algebra, with $\Delta x=x {\otimes} 1+ 1{\otimes} x$, 
$\varepsilon(x)=0$, and $T(x)=-x$ for all $x\in \fg$. 
By construction, the set of primitive elements in $U(\fg)$ 
coincides with $\fg$.  Suppose now that $\fg\cong \k^n$, 
with Lie bracket equal to $0$. We may then identify $U(\g)$ 
with the polynomial ring $\k[x_1,\dots , x_n]$.  Likewise, 
if $\widehat{U}(\fg)$ denotes the completion of $U(\fg)$ 
with respect to the filtration by powers of the augmentation ideal 
$J=\ker(\varepsilon)$,  we may then identify $\widehat{U}(\fg)$ 
with the power series ring $\k\com{x_1,\dots , x_n}$. 

From now on, we will assume that $\k$ is a field of characteristic $0$. 
As is well-known, the group algebra $\k{G}$ of a group $G$ is a Hopf algebra, 
with comultiplication $\Delta\colon \k{G}\to \k{G}\otimes \k{G}$ 
given by $\Delta(g)=g\otimes g$ for $g\in G$, counit $\varepsilon \colon \k{G}\to \k$ 
the augmentation map, and antipode $T\colon \k{G}\to \k{G}$ 
given by $T(g)=g^{-1}$. 
In \cite{Quillen69}, Quillen showed that the  $I$-adic completion 
of the group algebra,  
$\widehat{\k{G}}$, is a complete Hopf algebra, with comultiplication map 
\begin{equation}
\label{eq:hatdelta}
\xymatrixcolsep{16pt}
\xymatrix{\widehat{\Delta} \colon \widehat{\k{G}}\ar[r]& \widehat{\k{G}}\, 
\hat{\otimes}\,  \widehat{\k{G}}
}.
\end{equation}
where $\hat{\otimes}$ denotes the completed tensor product, defined 
in this case as  $\widehat{\k{G}}\, \hat{\otimes}\,  \widehat{\k{G}}=
\varprojlim_k \k{G}/I^k \otimes \k{G}/I^k$. 
Identifying the associated graded algebra $\gr\big(\k{G}\otimes \k{G}\big)$ 
with $\gr(\k{G})\otimes \gr(\k{G})$, 
we see that the degree completion $\widehat{\gr}(\k{G})$ is also 
a complete Hopf algebra, with comultiplication map 
\begin{equation}
\label{eq:bardelta}
\xymatrixcolsep{18pt}
\xymatrix{\bar{\Delta}:=\widehat{\gr}(\Delta) \colon\widehat{\gr}(\k{G})\ar[r]
&\widehat{\gr}(\k{G})\, \hat{\otimes}\,  \widehat{\gr}(\k{G})}.
\end{equation}

\subsection{Multiplicative expansions and Taylor expansions}
\label{subsec:exp}

Given a map $f\colon G\to R$, where $R$ is a ring, we will 
denote by $\bar{f} \colon \k{G}\to R$ its linear extension to 
the group algebra. 

\begin{definition}
\label{def:exp}
A \emph{\textup{(}multiplicative\textup{)} expansion}\/
of a group $G$ is a map 
\begin{equation}
\label{eq:expansion}
\xymatrixcolsep{16pt}
\xymatrix{
E\colon G \ar[r]& \widehat{\gr}(\k G)
}
\end{equation}
such that the linear extension $\bar{E}\colon\k G\to \widehat{\gr}(\k G)$  
is a filtration-preserving algebra morphism with the property that $\gr(\bar{E})=\id$. 
Furthermore, we say that the expansion $E$ is {\em faithful}\/ if $E$ is injective. 
\end{definition}
 
Alternatively, an expansion of $G$ is a (multiplicative) monoid 
map $E\colon G \to \widehat{\gr}(\k{G})$ such that the following 
property holds:  If $f\in I^k\setminus I^{k+1}$, then $\bar{E}(f)$ 
starts with $[f]\in I^k/I^{k+1}$,  that is, $\bar{E}(f)=(0,\dots,0,[f],*,*,\dots)$.  

Following Bar-Natan \cite{Bar-Natan16}, we make the following definition.

\begin{definition}
\label{def:taylor} 
An expansion $E\colon G \to \widehat{\gr}(\k G)$ is 
called a \emph{Taylor expansion}\/ 
(or, a \emph{group-like}\/ expansion)  if it sends all elements of 
$G$ to group-like elements of $\widehat{\gr}(\k G)$, 
that is, 
\begin{equation}
\label{eq:DeltaBar}
\xymatrix{\bar\Delta (E(g))=E(g) \hat{\otimes} E(g)
}
\end{equation} 
for all $g\in G$.  
\end{definition}

Equivalently, an expansion $E$ is a Taylor expansion if it is \emph{co-multiplicative}, 
i.e., the following diagram commutes:
\begin{equation}
\label{eq:grouplike}
\begin{gathered}
\xymatrix{
\k{G} \ar[d]^{{\bar{E}}}  \ar[r]^(.45){{\Delta}} &\k{G}\otimes \k{G} \ar[d]^{{\bar{E}} {\otimes}{\bar{E}}}\\
\widehat{\gr}(\k G)  \ar[r]^(.4){\bar\Delta} &\widehat{\gr}(\k G) \hat{\otimes}\widehat{\gr}(\k G)   . 
}
\end{gathered}
\end{equation}

\begin{prop}
\label{prop:Taylor}
A Taylor expansion $E\colon G\to \widehat{\gr}(\k{G})$ induces a 
filtration-preserving isomorphism of complete Hopf algebras,  
$\widehat{E}\colon \widehat{\k G}\to \widehat{\gr}(\k G)$, such that 
$\gr(\widehat{E})$ is the identity on $\gr(\k{G})$. 
\end{prop}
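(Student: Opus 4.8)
The plan is to build $\widehat{E}$ by extending $\bar{E}$ to the completion, check it is a filtration-preserving algebra map, show it is an isomorphism using that $\gr(\bar E) = \id$, and finally use the Taylor (group-like) hypothesis to see that it respects comultiplication. First, recall that $\bar E \colon \k G \to \widehat{\gr}(\k G)$ is filtration-preserving, meaning $\bar E(I^k) \subseteq \widehat{\cF}_k(\widehat{\gr}(\k G))$. In particular $\bar E(I^k) \to 0$ in the inverse limit topology, so $\bar E$ is continuous and extends uniquely to a continuous algebra homomorphism $\widehat{E} \colon \widehat{\k G} = \varprojlim_k \k G/I^k \to \widehat{\gr}(\k G)$, which is still filtration-preserving for the inverse-limit filtrations $\{\widehat{I^k}\}$ and $\{\widehat{\cF}_k\}$. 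Passing to associated graded functors, $\gr(\widehat{E}) = \gr(\bar E) = \id_{\gr(\k G)}$, since $\gr(\widehat{\k G})$ and $\gr(\widehat{\gr}(\k G))$ are both canonically $\gr(\k G)$ (the latter by the remark after \eqref{eq:completefil}).

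Next I would prove $\widehat{E}$ is an isomorphism. This is the standard fact that a filtration-preserving morphism of complete filtered objects which induces an isomorphism on associated graded pieces is itself an isomorphism; one argues degree by degree. Concretely, the induced maps $\widehat{\k G}/\widehat{I^k} \to \widehat{\gr}(\k G)/\widehat{\cF}_k$ are isomorphisms by induction on $k$, using the five lemma applied to the short exact sequences relating consecutive quotients, with the base case $k=1$ trivial and the inductive step supplied by $\gr_k(\widehat E)$ being an isomorphism; then pass to the inverse limit. Since both completions are already complete and Hausdorff with respect to these filtrations, $\widehat{E} = \varprojlim_k (\text{iso})$ is an isomorphism of filtered algebras, with continuous inverse.

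Finally, for the Hopf-algebra compatibility, I would use the commuting square \eqref{eq:grouplike}, which is exactly the statement that $\bar E$ intertwines $\Delta$ and $\bar\Delta$ at the level of group algebras. Completing this square --- both $\widehat\Delta$ on $\widehat{\k G}$ and $\bar\Delta$ on $\widehat{\gr}(\k G)$ are defined as the continuous extensions of $\Delta$ and $\widehat{\gr}(\Delta)$, per \eqref{eq:hatdelta}--\eqref{eq:bardelta} --- gives $(\widehat E \mathbin{\hat\otimes} \widehat E)\circ \widehat\Delta = \bar\Delta \circ \widehat E$ by continuity and density of $\k G$ in $\widehat{\k G}$. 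Compatibility with counits is immediate since both are the augmentation and $\bar E$ preserves it (as $\gr_0(\bar E) = \id$); compatibility with antipodes is then automatic, since the antipode of a bialgebra map between Hopf algebras is preserved whenever it exists and is unique. Thus $\widehat E$ is an isomorphism of complete Hopf algebras with $\gr(\widehat E) = \id$.

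I expect the only genuinely delicate point to be the care needed with the completed tensor product $\mathbin{\hat\otimes}$: one must check that $\widehat E \mathbin{\hat\otimes} \widehat E$ is well-defined and continuous on $\widehat{\k G} \mathbin{\hat\otimes} \widehat{\k G} = \varprojlim_k \k G/I^k \otimes \k G/I^k$, and that the identification $\gr(\k G \otimes \k G) \cong \gr(\k G)\otimes \gr(\k G)$ used in \eqref{eq:bardelta} is compatible with all the maps in sight --- but this is routine bookkeeping with inverse limits rather than a conceptual obstacle, and the filtration-preserving property of $\bar E$ makes every step go through.
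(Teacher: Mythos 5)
Your proposal is correct and follows essentially the same route as the paper: extend $\bar{E}$ to the $I$-adic completion, complete the commuting square \eqref{eq:grouplike} to get comultiplicativity, and deduce bijectivity from $\gr(\widehat{E})=\id$ by induction on the filtration quotients together with completeness and separatedness (which is exactly the paper's Lemma~\ref{lem:completefiltration}, reproved inline in your second paragraph). The extra remarks on counits, antipodes, and the completed tensor product are details the paper leaves implicit, not a different argument.
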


\begin{proof}
As in the above definition, the expansion $E$ induces a filtration-preserving algebra morphism,  
$\bar{E}\colon \k G\to \widehat{\gr}(\k G)$.  Applying the $I$-adic completion functor, 
we obtain an algebra morphism, $\widehat{E} \colon \widehat{\k G}\to \widehat{\gr}(\k G)$. 
By the above discussion, the expansion $E$ is group-like if and only if 
$\widehat{E}$ is co-multiplicative.   Applying the completion functor to 
diagram \eqref{eq:grouplike} yields another commuting diagram, 
\begin{equation}
\begin{gathered}
\xymatrix{
\widehat{\k G} \ar[d]^{\widehat{E}}  \ar[r]^(.45){\widehat{\Delta}} 
&\widehat{\k G}\hat{\otimes}\widehat{\k G} 
\ar[d]^{\widehat{E}\hat{\otimes}\widehat{E}}\\
\widehat{\gr}(\k G)  \ar[r]^(.4){\bar{\Delta}} 
&\widehat{\gr}(\k G) \hat{\otimes}\widehat{\gr}(\k G).
}
\end{gathered}
\end{equation}

Since $\bar{E}$ is filtration-preserving and $\gr(\bar{E})=\id$, this implies 
that the Hopf algebra morphism $\widehat{E}$ 
preserves filtrations and that $\gr(\widehat{E})=\id$.  
By induction on $k$, all induced maps 
$\widehat{\k G}/\widehat{I^k} \to\widehat{\gr}(\k G)/\widehat{\cF}_k$ 
are isomorphisms,
where $\widehat{\cF}_k$ is the filtration from display \eqref{eq:completefil}. 
It follows from the next lemma that $\widehat{E}$ is an isomorphism. 
\end{proof}

\begin{lemma}
\label{lem:completefiltration}
Let  $f\colon  A\to B$ be a morphism of 
filtered, complete, and separated algebras. 
If $\gr(f)\colon  \gr^{\cF}(A)\to \gr^{\cG}(B)$ 
is an isomorphism, then $f$ is also an isomorphism.
\end{lemma}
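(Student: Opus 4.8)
The plan is to prove that $f$ is bijective by hand, using separatedness of $A$ for injectivity and completeness of $A$ for surjectivity, and then to observe that the set-theoretic inverse is automatically a morphism of filtered algebras. Throughout, write $\{\cF_k\}_{k\ge 0}$ and $\{\cG_k\}_{k\ge 0}$ for the filtrations on $A$ and $B$, so that $f(\cF_k)\subseteq \cG_k$ for all $k$ and $\gr(f)$ restricts to maps $\gr_k(f)\colon \cF_k/\cF_{k+1}\to \cG_k/\cG_{k+1}$, each of which is assumed to be an isomorphism.

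For injectivity, let $a\in A$ be nonzero. Since $A$ is separated, $\bigcap_k \cF_k=0$, so there is a (unique) integer $k$ with $a\in \cF_k\setminus\cF_{k+1}$; let $[a]\in \cF_k/\cF_{k+1}$ be its nonzero symbol. Then $\gr_k(f)[a]=[f(a)]$ is nonzero in $\cG_k/\cG_{k+1}$, so in particular $f(a)\notin \cG_{k+1}$ and hence $f(a)\neq 0$. For surjectivity, fix $b\in B$ and construct a preimage by successive approximation: set $b_0=b$ and $k_0=0$, and inductively, given $b_j\in\cG_{k_j}$, use surjectivity of $\gr_{k_j}(f)$ to choose $a_j\in\cF_{k_j}$ with $f(a_j)\equiv b_j \pmod{\cG_{k_j+1}}$, and put
\[
b_{j+1}=b_j-f(a_j)\in\cG_{k_j+1},\qquad k_{j+1}=k_j+1 .
\]
Since $a_j\in\cF_{k_j}$ with $k_j\to\infty$, the partial sums of $\sum_{j\ge 0}a_j$ form a Cauchy sequence for the $\cF$-adic topology, so by completeness of $A$ they converge to some $a\in A$. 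Because $f$ is filtration-preserving it is continuous, and telescoping gives $\sum_{j=0}^{N}f(a_j)=b-b_{N+1}$ with $b_{N+1}\in\cG_{k_{N+1}}\to 0$ by separatedness of $B$; passing to the limit yields $f(a)=b$.

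Finally, the inverse bijection $g=f^{-1}$ is automatically a ring homomorphism, and it preserves filtrations: if $f(a)\in\cG_k$ then the symbol argument above forbids $a\in\cF_j\setminus\cF_{j+1}$ for any $j<k$, so $a\in\cF_k$; thus $g(\cG_k)\subseteq\cF_k$, which combined with $f(\cF_k)\subseteq\cG_k$ gives $g(\cG_k)=\cF_k$. Hence $f$ is an isomorphism of filtered algebras. The only step requiring genuine care is the surjectivity argument: one must arrange the correction series $\sum a_j$ to have terms of strictly increasing filtration level and then invoke completeness of $A$ together with continuity of $f$ to pass to the limit; the injectivity and strictness statements are routine bookkeeping with leading symbols.
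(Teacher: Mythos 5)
Your argument is correct, but it is organized differently from the paper's. The paper proves the lemma by showing, via induction on $k$, that all the induced maps on finite quotients $f_k\colon A/\cF_{k+1}A\to B/\cG_{k+1}B$ are isomorphisms, and then concludes by passing to inverse limits: the completion $\hat f=\varprojlim f_k$ is an isomorphism, and the hypotheses that $A$ and $B$ are complete and separated identify $f$ with $\hat f$. You instead work directly with elements: injectivity from separatedness of $A$ together with the nonvanishing of leading symbols, and surjectivity by the successive-approximation construction, which converges by completeness of $A$ and hits $b$ exactly by separatedness of $B$. The two proofs rest on the same mechanism --- your successive approximation is precisely the surjectivity of the map on inverse limits, unrolled --- but your version is self-contained and makes visible exactly where each of the four hypotheses (completeness and separatedness of each of $A$ and $B$) is used, and it also establishes explicitly that $f^{-1}$ is filtration-preserving, a point the paper leaves implicit. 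The paper's version is shorter because it delegates the convergence bookkeeping to the completion functor. One minor remark: in the surjectivity step you should note (as you implicitly do) that $\gr_0(f)$ being an isomorphism is what gets the induction started at $b_0=b\in\cG_0=B$; with that, everything checks out.
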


\begin{proof}
 By assumption, the homomorphisms 
$\gr_k(f)\colon \cF_{k}A/\cF_{k+1}A \to \cG_k B/\cG_{k+1} B$ 
are isomorphisms, for all $k\ge 1$.  An easy induction on $k$ 
shows that all maps $f_k\colon A/\cF_{k+1}A \to B/\cG_{k+1}B $ 
are isomorphisms. Therefore, the map 
$\hat{f} \colon \widehat{A} \to \widehat{A}$ is 
an isomorphism. On the other hand, both $A$ and $B$ are 
complete and separated, and so $A=\widehat{A}$ 
and $B=\widehat{B}$.  Hence $f=\hat{f}$, and we are done.
\end{proof}

\subsection{On the existence of Taylor expansions}
\label{subsec:alt-taylor}

As we shall see, not all finitely generated groups admit a Taylor
expansion.  We conclude this section with an if-and-only-if criterion 
for the existence of a such expansion. 

\begin{prop}
\label{prop:TaylorExp}
A filtration-preserving isomorphism of complete Hopf algebras,  
$\phi\colon \widehat{\k G}\to \widehat{\gr}(\k G)$, 
induces a Taylor expansion $E\colon G\to \widehat{\gr}(\k{G})$.
\end{prop}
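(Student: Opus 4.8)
The plan is to produce $E$ by restricting a suitably normalized version of $\phi$ to $G\subset\k G\subset\widehat{\k G}$, exploiting the standard fact that a morphism of Hopf algebras carries group-like elements to group-like elements. The one thing to arrange first is that $\gr(\phi)$ need not be the identity. Under the canonical identifications $\gr(\widehat{\k G})\cong\gr(\k G)$ and $\gr(\widehat{\gr}(\k G))\cong\gr(\k G)$ recorded in \S\ref{subsec:grkg} and \S\ref{subsec:hopf}, the map $\psi:=\gr(\phi)$ is a graded algebra automorphism of $\gr(\k G)$; it is moreover compatible with the coproduct, since $\phi$ is and since $\gr(\k G\otimes\k G)\cong\gr(\k G)\otimes\gr(\k G)$, so $\psi$ is an automorphism of the graded Hopf algebra $\gr(\k G)$. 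Being degree-preserving, $\psi$ extends to a filtration-preserving automorphism $\widehat{\psi}$ of the degree completion $\widehat{\gr}(\k G)$, and functoriality of completion makes $\widehat{\psi}$ an automorphism of the complete Hopf algebra $\widehat{\gr}(\k G)$, with inverse $\widehat{\psi^{-1}}$. I would then set
\[
\phi':=\widehat{\psi}^{-1}\circ\phi\colon\widehat{\k G}\longrightarrow\widehat{\gr}(\k G),
\]
which is again a filtration-preserving isomorphism of complete Hopf algebras, but now with $\gr(\phi')=\id$ on $\gr(\k G)$.

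Next I would define $E$ as the composite $G\hookrightarrow\k G\hookrightarrow\widehat{\k G}\xrightarrow{\phi'}\widehat{\gr}(\k G)$ and check the two requirements. Since each map in the composite is a multiplicative monoid map, so is $E$, and its linear extension $\bar E\colon\k G\to\widehat{\gr}(\k G)$ is simply $\phi'|_{\k G}$; hence $\bar E$ is a filtration-preserving algebra morphism, and because $\k G\hookrightarrow\widehat{\k G}$ induces the identity on associated graded algebras, $\gr(\bar E)=\gr(\phi')=\id$. Thus $E$ is an expansion in the sense of Definition \ref{def:exp}. For the Taylor property, I would use that every $g\in G$ is group-like in $\widehat{\k G}$, i.e. $\widehat\Delta(g)=g\,\hat{\otimes}\,g$; applying the complete Hopf algebra morphism $\phi'$, which intertwines $\widehat\Delta$ with $\bar\Delta$, gives $\bar\Delta(E(g))=(\phi'\,\hat{\otimes}\,\phi')(g\,\hat{\otimes}\,g)=E(g)\,\hat{\otimes}\,E(g)$, so $E(g)$ is group-like and $E$ is a Taylor expansion by Definition \ref{def:taylor}.

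The one point that needs care — and which I regard as the crux — is the normalization step: without replacing $\phi$ by $\phi'$, the restriction $\phi|_G$ is only a multiplicative expansion ``up to'' the automorphism $\gr(\phi)$, which generally differs from the identity demanded by Definition \ref{def:exp}. Checking that the correcting automorphism $\widehat{\psi}$ is genuinely a morphism of complete Hopf algebras (so that $\phi'$ remains one) relies only on the functoriality of $\gr(-)$ and of degree-completion in the category of complete Hopf algebras, together with the identification $\gr(\k G\otimes\k G)\cong\gr(\k G)\otimes\gr(\k G)$ noted in \S\ref{subsec:hopf}; everything else in the argument is routine bookkeeping with the filtrations.
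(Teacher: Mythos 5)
Your proof is correct and follows essentially the same route as the paper: the normalization $\phi':=\widehat{\psi}^{-1}\circ\phi$ is exactly the paper's $\tilde\phi:=(\widehat{\gr}(\phi))^{-1}\circ\phi$, and the subsequent definition of $E$ as the composite $G\hookrightarrow\k G\hookrightarrow\widehat{\k G}\xrightarrow{\phi'}\widehat{\gr}(\k G)$ together with the verification of multiplicativity, filtration-preservation, $\gr(\bar E)=\id$, and group-likeness matches the paper's argument step for step. Your added detail on why the correcting automorphism is itself a morphism of complete Hopf algebras is a reasonable elaboration of what the paper asserts in one line.
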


\begin{proof}
The isomorphism $\phi$ induces 
a filtration-preserving isomorphism of complete Hopf algebras, 
$\tilde\phi:=(\widehat{\gr}(\phi))^{-1}\circ \phi$, from $\widehat{\k{G}}$ to 
$\widehat{\gr}(\k G)$,  such that $\gr(\tilde\phi)=\id$.
Let $E\colon G\to \widehat{\gr}(\k G)$ be the composite 
\begin{equation}
\label{eq:gkg}
\xymatrixcolsep{20pt}
\xymatrix{
 G\ar@{^{(}->}[r] & \k{G}\ar^{\jmath}[r]  & \widehat{\k{G}}\ar[r]^(.4){\tilde\phi}
 &  \widehat{\gr}(\k G).
}
\end{equation}

Since both $\tilde\phi$ and $\jmath$ are morphisms of Hopf algebras, and 
since the inclusion $G\inj \k{G}$ is a monoid map sending $G$ to 
the group-like elements of $\k{G}$, the composite 
$E$ is also a monoid map. 
It is clear that $\widehat{E}=\tilde\phi$ and $\bar{E}=\tilde\phi\circ \jmath$. 
Since both $\tilde\phi$ and $\jmath$ are filtration-preserving, 
and $\gr(\jmath)=\gr(\widehat{E})=\id$, we infer that $\bar{E}$ is filtration-preserving 
and $\gr(\bar{E})=\id$.
Finally, by construction,  $E$ is a group-like expansion.
 \end{proof}

Propositions \ref{prop:Taylor} and \ref{prop:TaylorExp} can be summarized as follows. 
 
\begin{theorem}
\label{thm:TaylorHopf}
The assignment $E\leadsto \widehat{E}$ establishes a 
one-to-one correspondence between Taylor expansions 
$G\to \widehat{\gr}(\k{G})$ and filtration-preserving isomorphisms 
of complete Hopf algebras $\widehat{\k G}\to \widehat{\gr}(\k G)$ for which  
the associated graded morphism is the identity on $\gr(\k{G})$. 
\end{theorem}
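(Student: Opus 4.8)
The plan is to assemble Theorem \ref{thm:TaylorHopf} directly from the two propositions that precede it, packaging them as the two halves of a bijection. Let $\mathcal{T}$ denote the set of Taylor expansions $E\colon G\to \widehat{\gr}(\k{G})$, and let $\mathcal{H}$ denote the set of filtration-preserving isomorphisms $\phi\colon \widehat{\k G}\to \widehat{\gr}(\k G)$ of complete Hopf algebras with $\gr(\phi)=\id$ on $\gr(\k{G})$. First I would define the forward map $\Phi\colon \mathcal{T}\to \mathcal{H}$ by $E\mapsto \widehat{E}$; that this is well-defined — i.e. that $\widehat{E}$ lands in $\mathcal{H}$ — is precisely the content of Proposition \ref{prop:Taylor}.

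Next I would exhibit the inverse. Given $\phi\in\mathcal{H}$, Proposition \ref{prop:TaylorExp} produces a Taylor expansion; but for a clean bijection I would track the construction in that proof more carefully. Since $\gr(\phi)=\id$ already, one has $\widehat{\gr}(\phi)=\id$, so the auxiliary map $\tilde\phi=(\widehat{\gr}(\phi))^{-1}\circ\phi$ appearing in the proof of Proposition \ref{prop:TaylorExp} is simply $\phi$ itself. Thus the recipe $\phi\mapsto E_\phi$, where $E_\phi$ is the composite $G\inj \k{G}\xrightarrow{\jmath}\widehat{\k{G}}\xrightarrow{\phi}\widehat{\gr}(\k G)$, defines a map $\Psi\colon\mathcal{H}\to\mathcal{T}$, and the proof of Proposition \ref{prop:TaylorExp} already records that $\widehat{E_\phi}=\tilde\phi=\phi$, i.e. $\Phi\circ\Psi=\id_{\mathcal{H}}$.

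It then remains to check $\Psi\circ\Phi=\id_{\mathcal{T}}$, i.e. that an arbitrary Taylor expansion $E$ is recovered from $\widehat{E}$ by the composite in \eqref{eq:gkg}. This is immediate: by the proof of Proposition \ref{prop:Taylor}, $\widehat{E}$ is the $I$-adic completion of $\bar{E}$, so $\widehat{E}\circ\jmath=\bar{E}$ on $\k{G}$; restricting to $G\subset \k{G}$ and using that the original map under consideration is $E$ itself, we get $\widehat{E}\circ\jmath|_G=\bar{E}|_G=E$, which is exactly $E_{\widehat{E}}$. Hence $\Psi(\Phi(E))=E$.

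I do not expect a genuine obstacle here — the theorem is a formal repackaging, and the only point requiring a word of care is the observation that $\gr(\phi)=\id$ forces $\widehat{\gr}(\phi)=\id$, so that the two constructions are mutually inverse on the nose rather than merely up to the $\widehat{\gr}(\phi)$-twist; this is what makes the correspondence a clean bijection and justifies stating it as ``$E\leadsto\widehat{E}$'' with no further bookkeeping. I would close by remarking that naturality in $G$ is evident from the constructions, should it be needed later.
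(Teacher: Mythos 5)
Your proposal is correct and follows the paper's own route: the paper simply states that Theorem \ref{thm:TaylorHopf} summarizes Propositions \ref{prop:Taylor} and \ref{prop:TaylorExp}, and you assemble exactly those two propositions into the two directions of the bijection. Your extra care in checking that the constructions are mutually inverse on the nose — in particular the observation that $\gr(\phi)=\id$ forces $\tilde\phi=\phi$, so no $\widehat{\gr}(\phi)$-twist survives — is a detail the paper leaves implicit, and it is verified correctly.
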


This theorem generalizes a result of Massuyeau (\cite[Proposition~2.10]{Massuyeau12}), 
from finitely generated free groups to arbitrary finitely generated groups.  
Proposition \ref{prop:TaylorExp} and  Theorem \ref{thm:TaylorHopf} have 
as an immediate corollary the aforementioned 
criterion for the existence of a Taylor expansion.

\begin{corollary}
\label{cor:te}
A finitely generated group $G$ has a Taylor expansion if and only if 
there is an isomorphism of filtered Hopf algebras,  
$\widehat{\k G}\cong\widehat{\gr}(\k G)$.
\end{corollary}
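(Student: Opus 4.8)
The statement is essentially a repackaging of Theorem~\ref{thm:TaylorHopf}, so the plan is to read off each implication from Propositions~\ref{prop:Taylor} and~\ref{prop:TaylorExp}, being careful only about matching the notion of ``isomorphism of filtered Hopf algebras'' used here with the notion of ``filtration-preserving isomorphism of complete Hopf algebras'' used there.

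For the forward implication, I would begin with a Taylor expansion $E\colon G\to \widehat{\gr}(\k{G})$. By Proposition~\ref{prop:Taylor}, the induced map $\widehat{E}\colon \widehat{\k G}\to \widehat{\gr}(\k G)$ is a filtration-preserving isomorphism of complete Hopf algebras with $\gr(\widehat{E})=\id$. To see that this is an isomorphism of \emph{filtered} Hopf algebras, I would note that $\gr(\widehat{E}^{-1})=\gr(\widehat{E})^{-1}=\id$ is an isomorphism; since both $\widehat{\k G}$ and $\widehat{\gr}(\k G)$ carry complete, separated filtrations (as recorded in \S\ref{subsec:grkg}), the level-wise induction in the proof of Lemma~\ref{lem:completefiltration} shows that $\widehat{E}^{-1}$ is filtration-preserving as well. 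Hence $\widehat{E}$ furnishes the desired isomorphism $\widehat{\k G}\cong \widehat{\gr}(\k G)$ of filtered Hopf algebras.

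For the converse, suppose $\phi\colon \widehat{\k G}\to \widehat{\gr}(\k G)$ is an isomorphism of filtered Hopf algebras. In particular, $\phi$ is a filtration-preserving isomorphism of complete Hopf algebras, so Proposition~\ref{prop:TaylorExp} applies directly and yields a Taylor expansion $E\colon G\to \widehat{\gr}(\k{G})$. Concretely, that proof first corrects $\phi$ to $\tilde\phi=(\widehat{\gr}(\phi))^{-1}\circ\phi$, which additionally satisfies $\gr(\tilde\phi)=\id$, and then defines $E$ as the composite $G\hookrightarrow \k G\to \widehat{\k G}\xrightarrow{\tilde\phi}\widehat{\gr}(\k G)$, whose image consists of group-like elements.

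As anticipated, there is no genuine obstacle here: once one observes that the two categories of objects in play --- complete Hopf algebras with filtration-preserving isomorphisms, and filtered Hopf algebras with filtered isomorphisms --- coincide on the pair $\widehat{\k G}$, $\widehat{\gr}(\k G)$ (thanks to completeness and separatedness, via Lemma~\ref{lem:completefiltration}), the corollary follows formally from the two propositions it cites. The only mildly delicate point, and the one I would spell out carefully, is this identification of the two notions of isomorphism.
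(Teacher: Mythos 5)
Your proposal is correct and follows essentially the same route as the paper, which derives the corollary immediately from Propositions~\ref{prop:Taylor} and~\ref{prop:TaylorExp} (via Theorem~\ref{thm:TaylorHopf}). The only addition is your explicit check that $\widehat{E}^{-1}$ is filtration-preserving --- a detail the paper leaves implicit, and which your appeal to $\gr(\widehat{E})=\id$ together with separatedness handles correctly.
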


\section{Chen iterated integrals and Taylor expansions}
\label{sec:chenTaylor}

\subsection{Chen iterated integrals}
\label{subsec:chen}
In \cite{Chen73,Chen77}, Chen developed a theory of formal power series 
connections and iterated integrals on smooth manifolds. 
His original motivation was to describe the homology 
of the loop space of a smooth manifold $M$ 
in terms of the differential graded algebra formed by tensoring the de 
Rham algebra $\Omega_{\textrm{DR}}(M)$ with the tensor algebra 
on the vector space $H_{>0}(M,\R)$, completed with respect 
to the powers of the augmentation ideal.  As summarized below, 
Chen's theory leads to monodromy representations of the fundamental 
group of $M$  (see also Lin \cite{Linxiaosong97} and Kohno \cite{Kohno16} 
for further details).

For simplicity, we will assume the manifold $M$ has the homotopy type 
of a connected, finite-type CW-complex. 
Upon choosing a basis $\mathbf{X}=\{X_i\}_i$ for $\widetilde{H}_*(M,\k)$, 
we may identify the algebra 
$\Omega_{\textrm{DR}}(M) \otimes_{\k} \widehat{T}(\widetilde{H}_*(M,\k))$ 
with $\Omega_{\textrm{DR}}(M)\ncom{\mathbf{X}}$. (Here, $\k=\R$ or $\C$.) 
A {\em formal power series connection}\/ 
on $M$ is an element $\omega\in \Omega_{\textrm{DR}}(M)\ncom{\mathbf{X}}$.   
We may write such an element (which may also be viewed as a usual connection 
on the trivial bundle $M\times \k\ncom{\mathbf{X}}$) as 
\begin{equation}
\label{eq:connection}
\omega=
\sum w_iX_i + \cdots + \sum w_{i_1\ldots i_r}X_{i_1}\cdots X_{i_r}+\cdots,
\end{equation}
where the coefficients are smooth forms of positive degree on $M$.  
A connection $\omega$ as above is said to be {\em flat}\/ if it satisfies 
the Maurer--Cartan equation,  $d\omega-\omega\wedge \omega=0$.

For a homology class $X\in \widetilde{H}_{p}(M,\k)$ 
we set $\deg X= p-1$; more generally, we set
$\deg (X_{i_1}\cdots X_{i_r}):=\deg X_{i_1}+\cdots +\deg X_{i_r}$.
We denote by $\omega_0\in \Omega^1_{\textrm{DR}}(M)\otimes_{\k}  
\widehat{T}(H_1(M,\k))$ the degree $0$ part of $\omega$.

Now let $G=\pi_1(M,x_0)$ and suppose $\widehat{\gr}(\k{G})$ 
admits a presentation of the form $\widehat{T}(H_1(M,\k))/ I$, 
for some closed Hopf ideal $I$ in the completed tensor algebra on $H_1(M,\k)$.  
If the connection $\omega_0$ is flat modulo the relations in $I$, 
the corresponding holonomy  homomorphism,  
$J\colon G\to \widehat{T}(H_1(M,\k))/I$, 
may be defined by means of iterated integrals, as follows:
\begin{align}
\label{eq:chenint}
J(g)&=
1+\sum_{k=1}^{\infty} \int_{0\leq t_1\leq \cdots\leq t_k\leq 1}
\omega_0(\dot{\gamma}(t_k))\wedge \cdots\wedge \omega_0(\dot{\gamma}(t_1))\\ \notag
&=1+\sum_{k=1}^{\infty} \int_{0}^{1}
\omega_0(\dot{\gamma}(t_k))    \cdots \int_{0}^{t_3}\omega_0(\dot{\gamma}(t_2))
\int_{0}^{t_2}\omega_0(\dot{\gamma}(t_1))\, ,
\end{align}
where $g\in G$ is represented by a piecewise smooth loop $\gamma\colon [0,1] \to M$ 
at $x_0$.  As shown in \cite{Chen57} (see also  \cite{Linxiaosong97,Kohno16}), 
the holonomy homomorphism $J\colon G\to  \widehat{\gr}(\k{G})$ 
is multiplicative and maps $G$ to group-like elements in
$\widehat{\gr}(\k{G})$; thus, $J$ is a Taylor expansion for $G$.

\subsection{Expansions of free groups}
\label{sec:magnus}

Let $F_n$ be a finitely-generated free group on generators $x_1,\dots,x_n$.
The complete Hopf algebra $\widehat{\gr}(\k F_n)$ can be identified with 
$\k\ncom{\mathbf{X}}=\k\ncom{X_1,\dots,X_n}$, 
the power series ring over $\k$ in $n$ non-commuting variables. 
There are three well-known expansions of this group.

\begin{enumerate}
\item The first one is the Magnus expansion, 
$M\colon F_n\to \k\ncom{\mathbf{X}}$, 
given by $M(x_i)=1+X_i$, see \cite{Magnus-K-S}.
This expansion is multiplicative but not co-multiplicative 
if $n>1$; thus, it is not a Taylor expansion.

\item The second one is the power series expansion, 
$L\colon F_n\to \k\ncom{\mathbf{X}}$, given by 
$L(x_i)= \exp(X_i)$.  As shown by Lin in  \cite{Linxiaosong97},  
this is a Taylor expansion.

\item The third type of expansion arises from the construction outlined in
\S \ref{subsec:chen}, with $\k=\C$. Let $C_n=\C\setminus \{1,\ldots,n\}$ 
be the complex plane $\C$ with $n$ punctures, so that $F_n=\pi_1(C_n, 0)$. Let 
$w_i=\dfrac{1}{2\pi \sqrt{-1}}\cdot\dfrac{dz}{z-i}$
be closed $1$-forms on $C_n$ dual to the cycles $x_i$. 
Then $\omega=\sum_{i=1}^n w_i X_i$ is a degree~$0$ flat connection on the 
trivial bundle $C_n\times \C\ncom{\mathbf{X}}\to C_n$.
The corresponding monodromy representation, 
$J\colon F_n\to \C\ncom{\mathbf{X}}$, 
is given by
\begin{equation}
\label{eq:jeff}
J(f)=1+ \sum_{k=1}^{\infty}\sum\limits_{0\leq i_1, \dots, i_k\leq n} 
\left(\dfrac{1}{2\pi \sqrt{-1}} \right)^k 
\int\limits_{0\leq t_1\leq \cdots \leq t_k\leq 1} \bigwedge_{r=1}^k 
\dfrac{d\gamma(t_r)}{\gamma(t_r)-i_r}\, X_{i_1}\cdots X_{i_k}\, ,
\end{equation}
where $f\in F_n$ is represented by a piecewise smooth loop $\gamma\colon [0,1] \to C_n$ at $0$.
This gives another Taylor expansion over $\C$ for the free group $F_n$.
\end{enumerate}

\subsection{Expansions of free abelian groups}
\label{sec:magnus-abel}
Let $\Z^n$ be the free abelian group of rank $n>0$.  
This group admits a presentation of the form 
$\Z^n=F_n/N$, where $N$ is the normal subgroup of $F_n$ generated
by the commutators $[x_i,x_j]:=x_ix_jx_i^{-1}x_j^{-1}$ for $1\leq i<j\leq n$.

The complete Hopf algebra $\widehat{\gr}(\k \Z^n)$ may be identified with 
$\k\com{\mathbf{X}}=\k\com{X_1,\dots,X_n}$, 
the power series ring over $\k$ in $n$ commuting variables. 
The power series expansion of the free group $F_n$ induces a Taylor expansion 
of the free abelian group $\Z^n$; this expansion, $L\colon \Z^n\to \k\com{\mathbf{X}}$, 
is given by $L(x_i)= \exp(X_i)$. 

\subsection{Taylor expansions for surface groups }
\label{subsec:Surfacegroups}

Let $G=\pi_{1}(S_g)=F_{2g}/\langle r\rangle$ be the fundamental 
group of a compact, connected, orientable surface of genus $g\ge 1$.  
Such a group has a presentation with generators
$x_i, y_i$ for $i=1, \dots, g$ and a single relator
$r=\sum_{i=1}^g[x_i,y_i]$.   It is well-known that $G$ is $1$-formal. 
In particular, there is a Taylor expansion $G\to \widehat{\gr}(\k{G})$, 
for any field $\k$ of characteristic $0$. 
Here, the complete Hopf algebra $\widehat{\gr}(\k G)$ is generated by
$X_i$, $Y_i$ for $i=1, \dots, g$, and subjects to a relation
$\sum_{i=1}^g[X_i,Y_i]=0$. 
However, actually constructing such an expansion is not an easy task. 

Using Chen's theory of iterated integrals, Lin constructed in \cite{Linxiaosong97} 
an explicit Taylor expansion over $\k=\C$ for the group $G=\pi_{1}(S_g)$. 
Let $\alpha_i, \beta_i$ be closed $1$-forms dual to $x_i, y_i$, respectively.
Set $\omega=\sum_{r=1}^{\infty}\omega^{(r)}$, where 
$\omega^{(1)}=\sum_{i=1}^g \alpha_iX_i+ \sum_{i=1}^g \beta_iY_i$,
and $\omega^{(r)}$ is the homogeneous polynomial of degree $r$ 
defined inductively by solving the equation $d \omega -\omega\wedge \omega=0$.
Then $\omega$ is a flat formal power series connection
on $S_g$.  The corresponding expansion, $J\colon G\to \widehat{\gr}(\k G)$, 
is defined by means of the iterated integral
\eqref{eq:chenint}.
By Theorem \ref{thm:Taylorpropagation}\eqref{item:fieldextensionT},
there exists a rational Taylor expansion for $G$.

Recently, Massuyeau \cite{Massuyeau12} constructed 
rational Taylor expansions for the surface groups  $G=\pi_{1}(S_g)$ 
by suitably deforming the power series expansion of the free groups $F_{2g}$. 
  
\section{Lower central series and holonomy Lie algebras}
\label{sec:malcev}

\subsection{Associated graded Lie algebras}
\label{subsec:Associatedgraded}
Let $G$ be a group. 
The \emph{lower central series}\/ of $G$ is the sequence 
of subgroups $\{\Gamma_k G\}_{k\geq 1}$ \/  defined inductively 
by $\Gamma_1G=G$ and 
\begin{equation}
\label{eq:lcs}
\Gamma_{k+1} G=[\Gamma_k G,G]
\end{equation}
for $k\geq 1$. 
Here, for any subgroups $H$ and $K$ of $G$, we denote $[H,K]$ 
the subgroup of $G$ generated by all group commutators
$[h,g]:=hgh^{-1}g^{-1}$ with $h\in H$ and $g\in K$. In particular, 
$\Gamma_{2}G$ equals $G'$, the commutator subgroup of $G$. 
Clearly, each term in the LCS series is a normal subgroup 
(in fact, a characteristic subgroup) of $G$.  Moreover, 
$\Gamma_{k+1} G$ contains the commutator subgroup of $\Gamma_k G$, 
and so the quotient group, $\gr_k(G):=\Gamma_k G/\Gamma_{k+1} G$,  
is abelian. 

Let us fix a coefficient field $\k$ of characteristic $0$.  The 
associated graded Lie algebra of $G$ over $\k$ is defined by
\begin{equation}
\label{eq:gradedLiealgebra}
\gr(G,\k):=\bigoplus\limits_{k\geq 1} \gr_k(G) \otimes \k,
\end{equation}
with the Lie bracket induced by the group commutator. 
This construction is functorial: if $\varphi\colon G\to H$ 
is a group homomorphism, then $\varphi$ preserves the respective 
lower central series, and so it induces a morphism of graded Lie algebras, 
 $\gr(\varphi, \k)\colon \gr(G,\k) \to \gr(H,\k)$.

Assume now that $G$ is a finitely generated group.  Then each 
LCS quotient $\gr_k(G)$ is a finitely generated abelian group.   
Furthermore, $\gr(G,\k)$ is a finitely generated graded Lie algebra, 
that can be presented as $\gr(G,\k)=\Lie(V)/\fr$, where $\Lie(V)$ 
is the free Lie algebra on a finite-dimensional $\k$-vector space $V$ 
(with non-zero elements  in degree $1$), and $\fr$ is a homogeneous 
Lie ideal.  We let $\phi_k(G):=\dim \gr_k(G,\k)$ be the LCS ranks of $G$.

\subsection{Chen Lie algebras}
\label{subsec:ChenLie}
Another descending series associated to a group $G$ is the 
{\em derived series}, starting at 
$G^{(0)}=G$,  $G^{(1)}=G'$,  and $G^{(2)}=G''$, 
and defined inductively by $G^{(i+1)}=[G^{(i)},G^{(i)}]$.  Note 
that any homomorphism $G\to H$ takes $G^{(i)}$ to $H^{(i)}$. 
The quotient groups, $G/G^{(i)}$, are solvable; in particular, $G/G'=G_{\ab}$, 
while $G/G''$ is the maximal metabelian quotient of $G$. 

Assume now that $G$ is finitely generated. 
For each $i\ge 2$, the \textit{$i$-th Chen Lie algebra}\/ of $G$ is defined to 
be the associated graded Lie algebra of the corresponding solvable quotient, 
\begin{equation}
\label{eq:Chen Lie}
\gr(G/G^{(i)},\k).
\end{equation} 
Clearly, this construction is functorial. 
The quotient map, $p_i\colon G\surj G/G^{(i)}$, induces a surjective 
morphism $\gr(p_i)$ between associated graded Lie algebras
$\gr_k(G,\k)$ and  $\gr_k(G/G^{(i)},\k)$.  Plainly,  
this morphism is the canonical identification in degree $1$.  
In fact, the map  $\gr(p_i)$ is an isomorphism for each 
$k\leq 2^i-1$, see \cite{SW-holo}.

We now specialize to the case when $i=2$, 
originally studied by K.-T. Chen in \cite{Chen51}. 
The {\em Chen ranks}\/ of $G$ are defined as 
$\theta_k(G):=\dim_{\k} (\gr_k(G/G^{''},\k))$. 
By the above remarks, $\phi_k(G)\ge \theta_k(G)$, with 
equality for $k\le 3$. 

\subsection{Holonomy Lie algebras}
\label{subsec:holoLie}
Once again, let $G$ be a finitely generated group. Write $V=H_1(G,\k)$ 
and let $\mu_G^{\vee}\colon H_2(G,\k)\to V\wedge V$ be the dual of 
the cup product map $\mu_G\colon H^1(G,\k)\wedge H^1(G,\k) \to H^2(G,\k)$.
The \emph{holonomy Lie algebra}\/ of $G$ is the 
quadratic Lie algebra defined as
\begin{equation}
\label{eq:HolonomyLieAlgebra}
\fh(G,\k)=\lie(V)/\langle \im \mu_G^{\vee} \rangle\, .
\end{equation}
Clearly, this construction is functorial. Furthermore, 
there is a natural surjective morphism of graded Lie algebras, 
\begin{equation}
\label{eq:ComparisonMap}
\xymatrixcolsep{16pt}
\xymatrix{
\psi_G\colon \fh(G,\k) \ar@{->>}[r]& \gr(G,\k)\, ,
}
\end{equation}
inducing isomorphisms in degree $1$ and $2$. (See \cite[Lemma 6.1]{SW-holo} 
and references therein.) If the map $\psi_G$ is an isomorphism, then we say that 
the group $G$ is \emph{graded-formal} (over $\k$). 

\subsection{Free groups and surface groups}
\label{subsec:free surf}

We conclude this section with some simple examples illustrating 
the above concepts. 

\begin{example}
\label{ex:gr-free}
Let $F_n=\langle x_1,\dots, x_n\rangle$ be the free group 
of rank $n$.  Then $\gr(F_n,\k)=\Lie(x_1,\dots,x_n)$, the free 
Lie algebra on $n$ generators, and the map $\psi\colon 
\h(F_n, \k)\to \gr(F_n,\k)$ is an isomorphism.  Moreover, as 
shown by Witt \cite{Witt37} and Magnus \cite{Magnus37},  the LCS ranks 
are given by  
\begin{equation}
\label{eq:lcs-free}
\prod\nolimits_{k\geq 1}(1-t^k)^{\phi_k(F_n)}=1-n t ,
\end{equation}
or, equivalently, $\phi_k(F_n)=\tfrac{1}{k}\sum_{d\mid k} \mu(d) n^{k/d}$, 
where $\mu$ denotes the M\"{o}bius function.  Finally, as shown in 
\cite{Chen51}, the Chen ranks of the free groups are given by $\theta_1(F_n)=n$ 
and 
\begin{equation}
\label{eq:chen-free}
\theta_k(F_n)=(k-1)\binom{n+k-2}{k}
\quad
\text{for $k\ge 2$}.
\end{equation}
\end{example}

\begin{example}
\label{ex:gr-surface}
Let $S_g$ be a closed, orientable surface of genus $g\ge 1$. 
Its fundamental group, $\Pi_g=\pi_1(S_g)$, has a presentation 
with generators $ x_1,y_1,\dots, x_g,y_g$ and a single relator, 
$[x_1,y_1]\cdots [x_g,y_g]$.  As shown by Labute \cite{Labute70}, 
$\gr(\Pi_g,\k)= \Lie(x_1,y_1,\dots,x_g,y_g )/\langle 
\sum_{i=1}^{g} [x_i,y_i]\rangle$. Again, it is readily seen that 
$\h(\Pi_g), \k)\cong \gr(\Pi_g,\k)$. Furthermore, the LCS ranks 
of $\Pi_g$ are given by 
\begin{equation}
\label{eq:lcs-surf}
\prod\nolimits_{k\geq 1}(1-t^k)^{\phi_k(\Pi_g)}=1-2gt+t^2, 
\end{equation}
while the Chen ranks are given by $\theta_1(\Pi_g)=2g$, 
$\theta_2(\Pi_g)=2g^2-g-1$, and 
\begin{equation}
\label{eq:chen-surf}
\theta_k(\Pi_g)=(k-1)\binom{2g+k-2}{k}-\binom{2g+k-3}{k-2}
\quad\text{for $k\ge 3$}.
\end{equation}
\end{example}

\section{Malcev Lie algebras and formality properties}
\label{sect:malcev-formal}

\subsection{Malcev Lie algebras}
\label{subsec:malcev}
As before, let  $G$ be a finitely generated group, let $\k$ be a field 
of characteristic $0$, and let $\widehat{\k{G}}$ 
be the $I$-adic completion of the group algebra of $G$, where $I$ is the 
augmentation ideal of $\k{G}$.  
Following Quillen \cite{Quillen69}, we define the \emph{Malcev Lie algebra}\/ 
of $G$ as the set $\fm(G,\k)$ of all primitive elements in $\widehat{\k{G}}$, 
with bracket $[x,y]=xy-yx$.  By construction, $\fm(G,\k)$ is a complete, filtered 
Lie algebra.  Moreover, if we complete 
 the universal enveloping algebra $U(\fm(G,\k))$ 
with respect to the powers of its augmentation ideal, then 
$\widehat{U}(\fm(G,\k))\cong \widehat{\k{G}}$, as
complete Hopf algebras.

The set of all primitive elements in $\gr(\k{G})$ forms a graded Lie algebra,
which is isomorphic to $\gr(G,\k)$.
An important connection between the Malcev Lie algebra $\fm(G,\k)$ 
and the associated graded Lie algebra $\gr(G;\k)$ was discovered 
by Quillen, who showed in \cite{Quillen68} that there is an isomorphism 
of graded Lie algebras, 
\begin{equation}
\gr(\fm(G,\k))\cong \gr(G,\k).
\end{equation}

The set of all group-like elements in 
$\widehat{\k{G}}$ forms a group, denoted $\fM(G;\k)$.  This group comes 
endowed with a complete, separated filtration, whose $k$-th term 
is $\fM(G;\k) \cap (1+\widehat{I^k})$.  
As explained for instance in \cite{Massuyeau12}, 
there is a one-to-one, filtration-preserving 
correspondence between primitive elements 
and group-like elements of $\widehat{\k{G}}$ via the exponential 
and logarithmic maps 
\begin{equation}
\label{eq:explog}
\xymatrix{
 \fM(G;\k)\subset 1+\widehat{I} \ar@/_0.9pc/[rr]|{~\log~} &  
  & \widehat{I} \supset\fm(G;\k)\ar@/_1.2pc/[ll]|{~\exp~} }.
\end{equation} 

Let $G$ be a group which admits a finite presentation of the form $G=F/R$. 
Using a Taylor expansion for the finitely generated free group $F$, we may find 
a presentation for the Malcev Lie algebra 
$\fm(G;\k)$, using the approach of Papadima \cite{Papadima95} and 
Massuyeau \cite{Massuyeau12}, which may be summarized in 
the following theorem.

\begin{theorem}[\cite{Massuyeau12, Papadima95}]
\label{thm:Massuyeau}
Let $G$ be a group with generators $x_1,\dots,x_n$ and relators $r_1,\dots, r_m$.
Let $E$ be a Taylor expansion of the free group $F=\langle x_1,\dots,x_n\rangle$.
There exists then a unique filtered Lie algebra isomorphism
\[
\fm(G;\k) \cong\widehat{\Lie}(\k^n)/
\langle\!\langle W\rangle\!\rangle,
\]
where $\langle\!\langle W \rangle\!\rangle$ denotes 
the closed ideal of the completed free Lie algebra $\widehat{\Lie}(\k^n)$ generated 
by the subset $\{\log(E(r_1)),\dots,\log(E(r_m))\}$. 
\end{theorem}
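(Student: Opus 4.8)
The plan is to combine the Taylor expansion $E$ of the free group $F$ with the structure theory developed in Section~\ref{sec:expansion} to transport a presentation of $G$ from the group level to the Malcev Lie algebra level. First I would use Proposition~\ref{prop:Taylor}: the Taylor expansion $E$ of $F$ induces a filtration-preserving isomorphism of complete Hopf algebras $\widehat{E}\colon \widehat{\k F}\to \widehat{\gr}(\k F)$ with $\gr(\widehat{E})=\id$. Since $\widehat{\gr}(\k F)=\k\ncom{\mathbf{X}}$ and the primitive elements of a complete Hopf algebra form a Lie subalgebra stable under $\widehat{E}$, restricting $\widehat{E}$ to primitives yields a filtered Lie algebra isomorphism $\fm(F;\k)\isom \widehat{\Lie}(\k^n)$, under which $\log\circ\, E$ corresponds to the canonical map $F\to \widehat{\Lie}(\k^n)$ sending each $x_i$ to the degree-one generator. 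This is where the exponential/logarithmic correspondence \eqref{eq:explog} is used: $E(r_j)$ is group-like, so $\log(E(r_j))$ is a well-defined primitive element, i.e.\ an element of $\widehat{\Lie}(\k^n)$.

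Next I would exploit the right-exactness of the Malcev functor with respect to the presentation $G=F/R$, where $R$ is the normal closure of $\{r_1,\dots,r_m\}$. The key input is that $\fm(G;\k)$ is the quotient of $\fm(F;\k)$ by the closed ideal generated by $\{\log\kappa(r_j)\}$, where $\kappa\colon F\to \fM(F;\k)\hookrightarrow \widehat{\k F}$ is the canonical map; this is the Malcev-Lie-algebra incarnation of the right-exact sequence $R\to F\to G$ and is precisely the content of the Papadima--Massuyeau construction. Combining this with the isomorphism $\fm(F;\k)\cong \widehat{\Lie}(\k^n)$ from the previous paragraph, and observing that under $\widehat{E}$ the element $\log\kappa(r_j)$ maps to $\log(E(r_j))$ (because $\widehat{E}\circ \jmath$ restricted to $G$ is $E$, by the discussion preceding Theorem~\ref{thm:TaylorHopf}, and $\widehat{E}$ intertwines $\log$ with $\log$), we obtain the stated isomorphism $\fm(G;\k)\cong \widehat{\Lie}(\k^n)/\ncom{W}$ with $W=\{\log(E(r_1)),\dots,\log(E(r_m))\}$.

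For \emph{uniqueness} of the filtered Lie algebra isomorphism, I would argue that any such isomorphism is determined by where it sends the images of the generators $x_i$, which must go to the degree-one generators of $\widehat{\Lie}(\k^n)/\ncom{W}$ (up to the prescribed normalization coming from $\gr(\bar E)=\id$); then rigidity of complete filtered Lie algebras generated in degree one, together with Lemma~\ref{lem:completefiltration} applied on the enveloping-algebra level, forces the isomorphism to be unique.

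The main obstacle, and the step deserving the most care, is the right-exactness statement: that forming $\fm(-;\k)$ sends the exact sequence $1\to R\to F\to G\to 1$ to the presentation of $\fm(G;\k)$ as $\fm(F;\k)$ modulo the \emph{closed} ideal generated by the logarithms of the normal generators of $R$. This requires knowing that $\widehat{\k G}=\widehat{\k F}/\overline{\widehat{\k F}\cdot \ker(\k F\to \k G)\cdot \widehat{\k F}}$ as complete Hopf algebras, and then that passing to primitive elements is compatible with this quotient — the subtlety being that primitives of a quotient Hopf algebra need not be the image of primitives in general, so one must use the characteristic-zero hypothesis and the fact that everything is pro-nilpotent and filtered. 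Since this is exactly the Papadima--Massuyeau result cited as Theorem~\ref{thm:Massuyeau} in its original free-generator formulation, for the purposes of this exposition I would invoke it and concentrate the new work on the transport of structure via the Taylor expansion $E$, which is the genuinely novel ingredient here.
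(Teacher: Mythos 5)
The paper does not actually prove this statement: Theorem~\ref{thm:Massuyeau} is quoted from Papadima \cite{Papadima95} and Massuyeau \cite{Massuyeau12}, so there is no in-paper argument to compare yours against. With that caveat, your outline is a faithful reconstruction of the argument in those references: identify $\fm(F;\k)$ with $\widehat{\Lie}(\k^n)$ by restricting the Hopf algebra isomorphism $\widehat{E}$ of Proposition~\ref{prop:Taylor} to primitives; present $\fm(G;\k)$ as the quotient of $\fm(F;\k)$ by the closed Lie ideal generated by the $\log\kappa(r_j)$; and observe that $\widehat{E}$ carries $\log\kappa(r_j)$ to $\log E(r_j)$, since $E=\widehat{E}\circ\kappa$ and a filtered Hopf algebra isomorphism commutes with the $\log$ series of \eqref{eq:explog}. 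This is exactly the route the cited sources take.

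Two points need tightening. First, your final paragraph is partially circular: you propose to ``invoke'' the theorem itself to justify the right-exactness step. That step is the genuine content here (the rest is transport of structure), and if the proof is to be self-contained you must supply it: one shows $\widehat{\k G}\cong\widehat{\k F}/\overline{J}$, where $J$ is the two-sided ideal generated by the elements $r_j-1$, and then uses the characteristic-zero equivalence between complete Hopf algebras and their pronilpotent Lie algebras of primitives to convert the closed Hopf ideal $\overline{J}$ into the closed Lie ideal generated by the $\log\kappa(r_j)$ --- you correctly name this subtlety but do not discharge it. Second, the uniqueness assertion cannot hold for an arbitrary filtered Lie algebra isomorphism (one may always post-compose with a nontrivial filtered automorphism of the target); it is true only after a normalization, e.g.\ requiring the induced map on associated graded Lie algebras to be the canonical one sending the class of $x_i$ to the $i$-th degree-one generator. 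Your sketch gestures at this but should state the normalization explicitly before claiming rigidity.
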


\subsection{Formality and filtered-formality}
\label{subsec:ff}

The notion of formality first appeared in the study of rational 
homotopy types of topological spaces initiated by Sullivan \cite{Sullivan, DGMS}. 
Since then, it has been broadly used in investigating a variety 
of differential graded objects.   We recall now a formality notion 
introduced in \cite{SW-formal}.

\begin{definition}
\label{def:MalcevLie}
A finitely generated group $G$ is called \emph{filtered-formal} (over $\k$), 
if there is a filtered Lie algebra isomorphism from the Malcev Lie algebra $\fm(G;\k)$ 
to the degree completion $\widehat{\gr}(G;\k)$ inducing the identity on associated 
graded Lie algebras.
\end{definition}
 
As shown in \cite[Lemma 2.4]{SW-formal}, the following holds: 
if $\fm(G;\k)$ is isomorphic (as a filtered Lie algebras) to the 
degree completion of a graded Lie algebra $\fg$,
then the group $G$ is  filtered-formal  (over $\k$). 
The notion of filtered-formality satisfies the following 
propagation properties. 

\begin{theorem}[\cite{SW-formal}]
\label{thm:propagation}
Let $G$ be a finitely generated group. 
\begin{enumerate}
\item \label{item:splitinjection} Suppose there is a split 
monomorphism $\iota\colon K\rightarrow G$.  
If $G$ is filtered-formal, then $K$ is also filtered-formal. 
\item \label{item:fieldextension}   The group $G$ is filtered-formal over 
a field $\k$ of characteristic $0$ if and only if $G$ is filtered-formal over $\Q$.
\item \label{item:products}  
$G_1$ and $G_2$ are filtered-formal if and only if
 $G_1* G_2$  is  filtered-formal if and only if
 $G_1\times G_2$ is filtered-formal. 
\end{enumerate} 
\end{theorem}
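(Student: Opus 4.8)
The plan is to prove the three parts in the order \eqref{item:splitinjection}, \eqref{item:products}, \eqref{item:fieldextension}, since the product/coproduct statement reduces quickly to the split‑injection statement, and the field‑extension statement contains the only genuine difficulty.

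\textbf{Part \eqref{item:splitinjection}.} First I would exploit the functoriality of both Lie‑algebra constructions. A split monomorphism $\iota\colon K\to G$, with retraction $\rho\colon G\to K$ so that $\rho\circ\iota=\id_K$, induces filtered Lie algebra maps $\fm(\iota)\colon\fm(K,\k)\to\fm(G,\k)$ and $\fm(\rho)\colon\fm(G,\k)\to\fm(K,\k)$ with $\fm(\rho)\circ\fm(\iota)=\id$, and likewise $\widehat{\gr}(\iota)$ and $\widehat{\gr}(\rho)$ on degree completions of associated graded Lie algebras, again composing to the identity. If $\Phi\colon\fm(G,\k)\xrightarrow{\,\sim\,}\widehat{\gr}(G,\k)$ is a filtered isomorphism inducing the identity on associated graded, then the composite
\[
\Psi\colon\fm(K,\k)\xrightarrow{\,\fm(\iota)\,}\fm(G,\k)\xrightarrow{\,\Phi\,}\widehat{\gr}(G,\k)\xrightarrow{\,\widehat{\gr}(\rho)\,}\widehat{\gr}(K,\k)
\]
is a filtered Lie algebra morphism whose associated graded is $\gr(\rho)\circ\gr(\iota)=\id$ on $\gr(K,\k)$; since source and target are complete and separated, the evident Lie‑algebra version of Lemma \ref{lem:completefiltration} shows $\Psi$ is an isomorphism, so $K$ is filtered‑formal.

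\textbf{Part \eqref{item:products}.} Here I would first record the compatibilities $\fm(G_1\times G_2,\k)\cong\fm(G_1,\k)\times\fm(G_2,\k)$ and $\gr(G_1\times G_2,\k)\cong\gr(G_1,\k)\times\gr(G_2,\k)$, together with the coproduct versions $\fm(G_1*G_2,\k)\cong\fm(G_1,\k)\,\widehat{*}\,\fm(G_2,\k)$ and $\gr(G_1*G_2,\k)\cong\gr(G_1,\k)*\gr(G_2,\k)$ (as filtered, resp.\ graded, Lie algebras). These follow from $\k[G_1\times G_2]\cong\k G_1\otimes\k G_2$, $\k[G_1*G_2]\cong\k G_1*\k G_2$, and the compatibility of $I$‑adic completion with these operations, via Quillen's identification of primitives recalled in \S\ref{subsec:malcev}. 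Given them, if $G_1$ and $G_2$ are filtered‑formal, the product (resp.\ the coproduct, using functoriality of $\widehat{*}$) of the two filtered isomorphisms is again a filtered isomorphism which is the identity on associated graded, so $G_1\times G_2$ (resp.\ $G_1*G_2$) is filtered‑formal. For the converses, each $G_i$ is a retract of $G_1\times G_2$ and of $G_1*G_2$ (inclusion of a factor, followed by the evident projection), so Part \eqref{item:splitinjection} applies.

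\textbf{Part \eqref{item:fieldextension}.} The implication "filtered‑formal over $\Q$ $\Rightarrow$ over $\k$" is base change: since each $\gr_k(G)$ is a finitely generated abelian group, $\gr(G,\k)\cong\gr(G,\Q)\otimes_\Q\k$, and $\fm(G,\k)\cong\fm(G,\Q)\,\widehat{\otimes}_\Q\,\k$ as complete filtered Lie algebras, so a $\Q$‑isomorphism tensors up to a $\k$‑isomorphism. The reverse implication is the descent statement, and I expect it to be the main obstacle. The strategy is to work level by level: for each $n$, let $\mathbf{X}_n$ be the affine $\Q$‑scheme whose points parametrize filtered Lie algebra isomorphisms $\fm(G,\Q)/\cF_{n+1}\to\widehat{\gr}(G,\Q)/\cF_{n+1}$ inducing the identity on associated graded; these finite‑dimensional data are cut out inside an affine space by the (polynomial) bracket‑preservation equations together with the (linear) normalization on $\gr$. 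The unipotent $\Q$‑group $\mathbf{U}_n$ of filtered automorphisms of $\widehat{\gr}(G,\Q)/\cF_{n+1}$ that are the identity on $\gr$ acts freely on $\mathbf{X}_n$, and transitively after any field extension over which $\mathbf{X}_n$ acquires a point; filtered‑formality over $\k$ gives $\mathbf{X}_n(\k)\neq\varnothing$, hence a geometric point, so $\mathbf{X}_n$ is a $\mathbf{U}_n$‑torsor over $\Q$ and therefore $\mathbf{X}_n(\Q)\neq\varnothing$ because $H^1(\Q,\mathbf{U}_n)$ vanishes for unipotent groups in characteristic zero. A parallel argument applied to the unipotent kernels of $\mathbf{U}_{n+1}\to\mathbf{U}_n$ shows the transition maps $\mathbf{X}_{n+1}(\Q)\to\mathbf{X}_n(\Q)$ are surjective, so $\varprojlim_n\mathbf{X}_n(\Q)\neq\varnothing$, and any element of this limit is the desired filtered Lie algebra isomorphism $\fm(G,\Q)\to\widehat{\gr}(G,\Q)$ over $\Q$ inducing the identity on associated graded. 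The technical heart is verifying the torsor structure and the surjectivity of the transition maps; no single deep input is needed beyond the vanishing of $H^1$ for unipotent groups, but the bookkeeping must be done carefully.
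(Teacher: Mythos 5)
First, a remark on the comparison itself: the paper does not actually prove this theorem --- its ``proof'' is a pointer to Theorems 5.11, 6.6 and 7.17 of \cite{SW-formal} --- so your argument can only be measured against what those citations must contain. Your treatment of parts \eqref{item:splitinjection} and \eqref{item:products} is correct and is essentially the standard argument: functoriality of $\fm(-)$ and $\gr(-)$ applied to a retraction, the Lie-algebra analogue of Lemma \ref{lem:completefiltration}, the identifications $\fm(G_1\times G_2;\k)\cong\fm(G_1;\k)\times\fm(G_2;\k)$, $\fm(G_1*G_2;\k)\cong\fm(G_1;\k)\,\hat{*}\,\fm(G_2;\k)$ (and their $\gr$ counterparts), plus the observation that each factor is a retract. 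These steps are fine.

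The genuine gap is in the descent half of part \eqref{item:fieldextension}, at the inverse-limit stage. Your torsor argument does show $\mathbf{X}_n(\Q)\neq\varnothing$ for each $n$: each $\mathbf{X}_n$ is an affine $\Q$-variety of finite type, a $\k$-point gives a geometric point, the action of the unipotent group $\mathbf{U}_n$ is simply transitive, and $H^1(\Q,\mathbf{U}_n)=0$. But the claim that the transition maps $\mathbf{X}_{n+1}(\Q)\to\mathbf{X}_n(\Q)$ are surjective does not follow from ``a parallel argument applied to the kernels.'' The fibre of $\mathbf{X}_{n+1}\to\mathbf{X}_n$ over a given $\Q$-point $x$ is a pseudo-torsor under $\ker(\mathbf{U}_{n+1}\to\mathbf{U}_n)$, and the Galois-cohomology vanishing only helps once you know that fibre is \emph{geometrically nonempty}. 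That nonemptiness would follow if $\mathbf{U}_{n+1}(\k)\to\mathbf{U}_n(\k)$ were surjective (so that the $\k$-point of $\mathbf{X}_{n+1}$ could be moved to lie over $x$), but a filtered automorphism of the truncation $\widehat{\gr}(G;\Q)/\cF_{n+1}$ inducing the identity on the associated graded need not lift to the next truncation --- equivalently, a filtration-raising derivation of a truncated graded Lie algebra can fail to extend one step further; there is a genuine obstruction. So some $\Q$-points of $\mathbf{X}_n$ may simply not be in the image of $\mathbf{X}_{n+1}$, over any field, and $\varprojlim_n\mathbf{X}_n(\Q)$ could a priori be empty even though each term is not. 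To repair this you would have to replace $\mathbf{X}_n$ by the (constructible, in fact $\mathbf{U}_n$-invariant, hence locally closed) images $\mathbf{X}'_n=\im\bigl(\mathbf{X}_m\to\mathbf{X}_n\bigr)$ for $m\gg n$, check these stabilize, and rerun the torsor argument for the induced system --- this is exactly the bookkeeping you flag as ``technical heart,'' and as written it is missing rather than routine. (For the record, \cite{SW-formal} takes a different route to Theorem 6.6, via the completed tensor identification $\fm(G;\k)\cong\fm(G;\Q)\,\hat{\otimes}_{\Q}\,\k$ together with a descent statement for gradings on complete filtered Lie algebras in the spirit of \cite{Cornulier14}, rather than a level-by-level torsor argument.)
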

\begin{proof}
This theorem is a combination of the following results 
from \cite{SW-formal}:  
Theorem 5.11 for \eqref{item:splitinjection};
Theorem 6.6 for \eqref{item:fieldextension}; 
Theorem 7.17 for \eqref{item:products}.
\end{proof} 

In particular, if a finitely generated group $G$ if filtered-formal
over $\C$, then it also filtered-formal over $\Q$.

A finitely generated group  group $G$ is said to be \emph{$1$-formal}\/ (over $\k$)
if $\fm(G,\k)\cong\widehat{\fh}(G,\k)$ as filtered Lie algebras. It is readily seen 
that $G$ is $1$-formal if and only if it is 
graded-formal and filtered-formal.

\subsection{Chen Lie algebras and formality}
\label{subsec:chen formal}

The next theorem is the Lie algebra version of Theorem 3.5 from \cite{PS04-imrn}, 
which describes the relationship between the Malcev Lie algebras of the derived quotients 
of a group $G$ and the corresponding quotients of the Malcev Lie algebra of $G$. 

\begin{theorem}[\cite{PS04-imrn}]
\label{thm:DerivedQuotientMalcev}
Let $G$ be a finitely generated group. 
There is an isomorphism of complete, separated filtered Lie algebras,
\begin{equation*}
\fm(G/G^{(i)};\k)\cong \fm(G;\k)/\overline{\fm(G;\k)^{(i)}},
\end{equation*}
for each $i\ge 2$, where $\overline{\fm(G;\k)^{(i)}}$ is the closure of $\fm(G;\k)^{(i)}$ 
with respect to the filtration topology on $\fm(G;\k)$.
\end{theorem}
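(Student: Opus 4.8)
The plan is to deduce Theorem~\ref{thm:DerivedQuotientMalcev} from the analogous statement at the level of group algebras, namely Theorem~3.5 of \cite{PS04-imrn}, which asserts a filtered Hopf-algebra isomorphism $\widehat{\k[G/G^{(i)}]} \cong \widehat{\k G}/\overline{\widehat{J^{(i)}}}$, where $J$ is the two-sided ideal of $\widehat{\k G}$ generated by the augmentation ideal of the subgroup $G^{(i)}$ (and the bar denotes closure). First I would recall that, by Quillen's results recalled in \S\ref{subsec:malcev}, the Malcev Lie algebra $\fm(G;\k)$ is recovered as the set of primitive elements of $\widehat{\k G}$, and conversely $\widehat{U}(\fm(G;\k)) \cong \widehat{\k G}$ as complete Hopf algebras; the same holds for $G/G^{(i)}$. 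So the filtered Hopf-algebra isomorphism from \cite{PS04-imrn} restricts to a filtered Lie-algebra isomorphism on primitives, $\fm(G/G^{(i)};\k) \cong \Prim\bigl(\widehat{\k G}/\overline{\widehat{J^{(i)}}}\bigr)$.

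The crux is then to identify the primitive elements of the quotient Hopf algebra $\widehat{\k G}/\overline{\widehat{J^{(i)}}}$ with $\fm(G;\k)/\overline{\fm(G;\k)^{(i)}}$. The key step is to show that $\overline{\widehat{J^{(i)}}}$, intersected with the primitives $\fm(G;\k)$, is precisely $\overline{\fm(G;\k)^{(i)}}$, and that passing to primitives commutes with taking this quotient. For the first part I would argue that $\fm(G;\k)^{(i)}$, the $i$-th derived Lie ideal, generates (as a closed two-sided ideal in $\widehat{U}(\fm(G;\k)) = \widehat{\k G}$) exactly the ideal $\overline{\widehat{J^{(i)}}}$: indeed $J^{(i)}$ is generated by $\{g-1 : g \in G^{(i)}\}$, and under $\log/\exp$ the subgroup $G^{(i)}$ corresponds, after completion, to the subgroup of $\fM(G;\k)$ whose associated Lie algebra is $\overline{\fm(G;\k)^{(i)}}$ --- this last identification uses the compatibility of the derived series of the group with the derived series of its Malcev Lie algebra, which is part of the content of \cite{PS04-imrn} and the exponential correspondence \eqref{eq:explog}. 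For the second part, I would invoke the fact that a surjection of complete Hopf algebras over a characteristic-zero field induces a surjection on primitives (a standard consequence of the Milnor--Moore / Cartier theory for complete filtered Hopf algebras), together with the observation that the kernel of $\fm(G;\k) \to \Prim(\widehat{\k G}/\overline{\widehat{J^{(i)}}})$ is $\fm(G;\k) \cap \overline{\widehat{J^{(i)}}} = \overline{\fm(G;\k)^{(i)}}$.

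Finally, I would check that the isomorphism so obtained is filtration-preserving and that source and target are complete and separated. The filtrations on both sides are the inherited $I$-adic/degree filtrations, and completeness and separatedness of the quotient follow because one takes the closure $\overline{\fm(G;\k)^{(i)}}$ in the filtration topology; separatedness of $\fm(G;\k)/\overline{\fm(G;\k)^{(i)}}$ is exactly why the closure is needed. One then concludes with Lemma~\ref{lem:completefiltration}-style reasoning, or simply transports the completeness from the group-algebra statement.

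The main obstacle I expect is the identification $\fm(G;\k) \cap \overline{\widehat{J^{(i)}}} = \overline{\fm(G;\k)^{(i)}}$: the inclusion $\supseteq$ is straightforward, but $\subseteq$ requires care because a primitive element lying in the closed ideal generated by the $i$-th derived subgroup need not obviously lie in the closure of the $i$-th derived \emph{Lie} ideal. The cleanest way around this is to avoid computing the intersection directly and instead observe that both $\fm(G/G^{(i)};\k)$ and $\fm(G;\k)/\overline{\fm(G;\k)^{(i)}}$ are the Malcev Lie algebras of solvable-by-finite quotients whose enveloping algebras are identified by \cite{PS04-imrn}; since over a field of characteristic zero a complete filtered Lie algebra is determined by its completed enveloping Hopf algebra (via $\Prim$), the two Lie algebras must coincide. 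This reduces everything to the already-established Hopf-algebra statement plus the functoriality of $\Prim$.
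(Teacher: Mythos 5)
The paper does not prove this statement: it is introduced as ``the Lie algebra version of Theorem 3.5 from \cite{PS04-imrn}'' and stated with that citation in place of a proof. There is therefore no in-paper argument to compare yours against; what can be judged is whether your sketch stands on its own, and there it has a genuine gap at exactly the point you flag.

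Your framing --- pass to completed group algebras, use $\widehat{U}(\fm(G;\k))\cong\widehat{\k G}$, and apply $\Prim$ --- is the natural one, and you correctly isolate the crux: the identification $\fm(G;\k)\cap\overline{\widehat{J^{(i)}}}=\overline{\fm(G;\k)^{(i)}}$, i.e.\ that the closed Hopf ideal of $\widehat{\k G}$ generated by $\{g-1 : g\in G^{(i)}\}$ coincides with the closed ideal generated by the Lie ideal $\overline{\fm(G;\k)^{(i)}}$. But the escape route you offer is circular. You propose to recognize both $\fm(G/G^{(i)};\k)$ and $\fm(G;\k)/\overline{\fm(G;\k)^{(i)}}$ as Malcev Lie algebras ``whose enveloping algebras are identified by \cite{PS04-imrn}'' and let $\Prim$ do the rest; yet $\fm(G;\k)/\overline{\fm(G;\k)^{(i)}}$ is not a priori the Malcev Lie algebra of anything --- that is the content of the theorem --- and the claim that its completed enveloping algebra equals $\widehat{\k G}/\overline{\widehat{J^{(i)}}}$ is precisely the ideal identification you set out to avoid. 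To actually close the gap one must relate the group-theoretic derived series to the Lie-theoretic one inside $\widehat{\k G}$: for instance, show via the Baker--Campbell--Hausdorff formula that $\log\big(G^{(i)}\big)$ topologically generates $\overline{\fm(G;\k)^{(i)}}$ as a closed Lie ideal (this is where the exactness properties of Malcev completion enter, typically by first treating the nilpotent quotients $G/\Gamma_k G$ and passing to the inverse limit), and then use the general fact that for a closed Lie ideal $\fh$ of a complete Lie algebra $\fg$ one has $\widehat{U}(\fg)/\langle\fh\rangle\cong\widehat{U}(\fg/\fh)$ and $\fg\cap\langle\fh\rangle=\fh$. None of this is supplied in your sketch, so as written the argument reduces the theorem to an unproved statement of essentially the same depth.
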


One important application of Theorem \ref{thm:DerivedQuotientMalcev} is the 
next theorem, which delineates the relationship between associated graded Lie algebras of 
derived quotients and derived quotients of associated graded Lie algebras. 
This theorem also shows that filtered-formality is preserved under the operation 
of taking derived quotients. 

\begin{theorem}[\cite{SW-formal}]
\label{thm:DerivedQuotientMalcevISO}
The quotient map $p_i\colon G\surj G/G^{(i)}$ induces a 
natural epimorphism of graded $\k$-Lie algebras, 
\begin{equation*}
\xymatrix{\Psi_G^{(i)}\colon \gr(G;\k)/\gr(G;\k)^{(i)} \ar@{->>}[r]
& \gr(G/G^{(i)};\k)},
\end{equation*}
for each $i\ge 2$.
Moreover, if the group $G$ is filtered-formal, then $\Psi_G^{(i)}$ is 
an isomorphism and the derived quotient $G/G^{(i)}$ is filtered-formal.
\end{theorem}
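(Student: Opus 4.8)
The plan is to handle the two assertions separately: the existence of the natural epimorphism $\Psi_G^{(i)}$ is elementary and functorial, while the isomorphism statement and the filtered-formality of $G/G^{(i)}$ are deduced from Theorem~\ref{thm:DerivedQuotientMalcev} together with the criterion recalled after Definition~\ref{def:MalcevLie} (namely, \cite[Lemma~2.4]{SW-formal}), by transporting derived-series data through the filtered-formality isomorphism of $G$.

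First, for the epimorphism: by functoriality of $\gr(-,\k)$, the quotient map $p_i\colon G\surj G/G^{(i)}$ induces a surjective morphism of graded Lie algebras $\gr(p_i)\colon \gr(G;\k)\surj \gr(G/G^{(i)};\k)$, and, since any Lie algebra homomorphism carries the $i$-th derived subalgebra into the $i$-th derived subalgebra, $\gr(p_i)$ sends $\gr(G;\k)^{(i)}$ into $\gr(G/G^{(i)};\k)^{(i)}$. I would next prove the sublemma that $\gr(H;\k)^{(i)}=0$ for any group $H$ with $H^{(i)}=1$: this follows by induction on $j$ from the degree-wise inclusion $\gr(H;\k)^{(j)}_k\subseteq L^{(j)}_k$, where $L^{(j)}_k$ denotes the $\k$-span of the image of $H^{(j)}\cap\Gamma_k H$ in $\gr_k(H;\k)$, the inductive step using that a bracket $[\bar x,\bar y]$ with $x\in H^{(j)}\cap\Gamma_a H$ and $y\in H^{(j)}\cap\Gamma_b H$ is the class of the group commutator $[x,y]\in [H^{(j)},H^{(j)}]\cap[\Gamma_a H,\Gamma_b H]\subseteq H^{(j+1)}\cap\Gamma_{a+b}H$. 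Applied to $H=G/G^{(i)}$, this gives $\gr(G/G^{(i)};\k)^{(i)}=0$, so $\gr(p_i)$ annihilates $\gr(G;\k)^{(i)}$ and factors through an epimorphism $\Psi_G^{(i)}\colon \gr(G;\k)/\gr(G;\k)^{(i)}\surj \gr(G/G^{(i)};\k)$, whose naturality is inherited from that of $\gr(p_i)$.

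Now suppose $G$ is filtered-formal, and fix a filtered Lie algebra isomorphism $\varphi\colon\fm(G;\k)\isom\widehat{\fg}$ inducing the identity on associated graded, where $\fg:=\gr(G;\k)$. Being a filtration-preserving isomorphism of complete, separated filtered Lie algebras, $\varphi$ is a homeomorphism, so it carries $\fm(G;\k)^{(i)}$ onto $\widehat{\fg}^{(i)}$ and the closure $\overline{\fm(G;\k)^{(i)}}$ onto $\overline{\widehat{\fg}^{(i)}}$, and therefore induces a filtered isomorphism $\fm(G;\k)/\overline{\fm(G;\k)^{(i)}}\isom\widehat{\fg}/\overline{\widehat{\fg}^{(i)}}$. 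The crux is the identification of $\overline{\widehat{\fg}^{(i)}}$ with $\prod_k\fg^{(i)}_k$, the degree completion of the graded ideal $\fg^{(i)}\subseteq\fg$: the inclusion $\widehat{\fg}^{(i)}\subseteq\prod_k\fg^{(i)}_k$ follows by induction, because the degree-$k$ component of a bracket in $\widehat{\fg}$ of elements of $\prod_j\fg^{(m)}_j$ lies in $\fg^{(m+1)}_k$; conversely, each truncation $(z_1,\dots,z_N,0,\dots)$ of an element $(z_k)\in\prod_k\fg^{(i)}_k$ already lies in $\fg^{(i)}\subseteq\widehat{\fg}^{(i)}$ and these truncations converge in the degree topology, while $\prod_k\fg^{(i)}_k$ is closed. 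Hence $\widehat{\fg}/\overline{\widehat{\fg}^{(i)}}$ is, as a filtered Lie algebra, the degree completion of the graded Lie algebra $\fg/\fg^{(i)}=\gr(G;\k)/\gr(G;\k)^{(i)}$, and, combining with Theorem~\ref{thm:DerivedQuotientMalcev}, we conclude that $\fm(G/G^{(i)};\k)$ is isomorphic, as a filtered Lie algebra, to the degree completion of $\gr(G;\k)/\gr(G;\k)^{(i)}$.

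Two conclusions then follow. First, \cite[Lemma~2.4]{SW-formal} applies directly and shows $G/G^{(i)}$ is filtered-formal. Second, taking associated graded Lie algebras in the last isomorphism and using Quillen's isomorphism $\gr(\fm(H;\k))\cong\gr(H;\k)$ with $H=G/G^{(i)}$ gives $\gr(G/G^{(i)};\k)\cong\gr(G;\k)/\gr(G;\k)^{(i)}$; since $G$, hence $G/G^{(i)}$, is finitely generated, both sides are finite-dimensional in each degree, so they agree dimension-wise in each degree, and therefore the degree-wise surjection $\Psi_G^{(i)}$ is an isomorphism. I expect the main obstacle to be the topological bookkeeping in the third paragraph — verifying that the closure of the abstract $i$-th derived subalgebra of $\widehat{\fg}$ is exactly the degree completion of the graded ideal $\fg^{(i)}$, and that $\varphi$ genuinely intertwines all the closures, quotients, and filtration topologies in play; a secondary subtlety is that the isomorphism $\gr(G/G^{(i)};\k)\cong\gr(G;\k)/\gr(G;\k)^{(i)}$ produced via Malcev Lie algebras is not a priori $\Psi_G^{(i)}$ itself, which the dimension count circumvents.
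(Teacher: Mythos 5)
The paper does not actually prove this theorem here --- it is imported from \cite{SW-formal} with only a citation --- but your argument is correct and follows essentially the same route as the cited source: the epimorphism comes from functoriality of $\gr(-;\k)$ together with the vanishing of $\gr(G/G^{(i)};\k)^{(i)}$, and the isomorphism and filtered-formality claims come from combining Theorem~\ref{thm:DerivedQuotientMalcev} with the identification of $\overline{\widehat{\gr}(G;\k)^{(i)}}$ as the degree completion of the graded ideal $\gr(G;\k)^{(i)}$ and then invoking \cite[Lemma~2.4]{SW-formal}. Your closing degree-wise dimension count, which upgrades the abstract isomorphism produced via Malcev Lie algebras to the specific surjection $\Psi_G^{(i)}$, is a sound way to finish.
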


\subsection{filtered-formality and Chen Lie algebras}
\label{subsec:ff-chen}
As mentioned previously, any homomorphism $G_1\to G_2$ induces morphisms 
of graded Lie algebras, 
$\gr(G_1;\k)\to \gr(G_2;\k)$ and $\gr(G_1/G_1^{(i)};\k)\to \gr(G_2^{(i)};\k)$.
On the other hand, it is not {\it a priori}\/ clear that a morphism 
$\gr(G_1;\k)\to \gr(G_2;\k)$ should induce 
morphisms between the corresponding Chen Lie algebras. 
Nevertheless, as the next theorem shows, this happens 
for filtered-formal groups.

\begin{theorem}
\label{thm:Chenobstruction}
Let $G_1$ and $G_2$ be two $\k$-filtered-formal groups. 
Then every morphism of graded Lie algebras,  
$\alpha \colon \gr(G_1;\k)\to \gr(G_2,\k)$, induces 
morphisms $\alpha_i\colon \gr(G_1/G_1^{(i)};\k)\to \gr(G_2/G_2^{(i)};\k)$  
for all $i\ge 1$. Consequently, if  $\gr(G_1;\k)\cong \gr(G_2;\k)$, then
$\gr(G_1/G_1^{(i)};\k)\cong \gr(G_2/G_2^{(i)};\k)$, for all $i$.
\end{theorem}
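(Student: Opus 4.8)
The plan is to reduce the statement about graded Lie algebras of derived quotients to a statement about Malcev Lie algebras, where the corresponding functoriality is automatic, and then to transfer back using filtered-formality. First I would invoke Theorem~\ref{thm:DerivedQuotientMalcev}, which identifies $\fm(G_j/G_j^{(i)};\k)$ with the quotient $\fm(G_j;\k)/\overline{\fm(G_j;\k)^{(i)}}$. Given a morphism of graded Lie algebras $\alpha\colon \gr(G_1;\k)\to \gr(G_2;\k)$, I would use the filtered-formality hypothesis to lift $\alpha$ to a filtered Lie algebra morphism $\widehat{\alpha}\colon \fm(G_1;\k)\to \fm(G_2;\k)$: indeed, by Definition~\ref{def:MalcevLie} there are filtered isomorphisms $\fm(G_j;\k)\cong\widehat{\gr}(G_j;\k)$ inducing the identity on associated graded, so $\widehat{\alpha}$ may be taken to be the completion of $\alpha$ transported across these isomorphisms. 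The key point is that $\widehat{\alpha}$, being a Lie algebra morphism, automatically carries the derived series into the derived series, hence carries the closure $\overline{\fm(G_1;\k)^{(i)}}$ into $\overline{\fm(G_2;\k)^{(i)}}$ (continuity of $\widehat{\alpha}$ with respect to the filtration topologies is what makes the closures behave). Therefore $\widehat{\alpha}$ descends to a filtered morphism of the quotients, and via Theorem~\ref{thm:DerivedQuotientMalcev} to a filtered morphism $\fm(G_1/G_1^{(i)};\k)\to\fm(G_2/G_2^{(i)};\k)$.

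Next I would pass to associated graded Lie algebras. Applying Quillen's isomorphism $\gr(\fm(H;\k))\cong\gr(H;\k)$ to $H=G_j/G_j^{(i)}$, the graded map associated to the filtered morphism constructed above is precisely a morphism $\alpha_i\colon\gr(G_1/G_1^{(i)};\k)\to\gr(G_2/G_2^{(i)};\k)$. To see that $\alpha_i$ depends only on $\alpha$ and not on the chosen lift, I would observe that the associated graded of $\widehat{\alpha}$ is $\alpha$ itself (by construction of the lift), and that $\gr$ is functorial along the quotient maps; alternatively, one identifies $\gr(G_j/G_j^{(i)};\k)$ with $\gr(G_j;\k)/\gr(G_j;\k)^{(i)}$ via the isomorphism $\Psi_{G_j}^{(i)}$ of Theorem~\ref{thm:DerivedQuotientMalcevISO} (valid since $G_j$ is filtered-formal), under which $\alpha_i$ is simply the map induced by $\alpha$ on these derived-series quotients. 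This second description also immediately gives the naturality and the compatibility $\alpha_1=\alpha$. Having produced $\alpha_i$, the final consequence is formal: if $\alpha$ is an isomorphism, so is its inverse, and functoriality of the construction $\alpha\mapsto\alpha_i$ shows $(\alpha^{-1})_i=(\alpha_i)^{-1}$, whence $\gr(G_1/G_1^{(i)};\k)\cong\gr(G_2/G_2^{(i)};\k)$.

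The main obstacle I anticipate is the continuity/closure bookkeeping: to conclude that $\widehat{\alpha}(\overline{\fm(G_1;\k)^{(i)}})\subseteq\overline{\fm(G_2;\k)^{(i)}}$ one needs $\widehat{\alpha}$ to be continuous for the filtration topologies, which follows because $\widehat{\alpha}$ is filtration-preserving, and one needs $\widehat{\alpha}(\fm(G_1;\k)^{(i)})\subseteq\fm(G_2;\k)^{(i)}$, which follows because any Lie algebra homomorphism respects the derived series; then taking closures and using continuity gives the inclusion on closures. A secondary subtlety is checking that the lift $\widehat{\alpha}$ genuinely exists with associated graded equal to $\alpha$ — here I rely on the fact that filtered-formality gives an isomorphism $\fm(G_j;\k)\xrightarrow{\ \simeq\ }\widehat{\gr}(G_j;\k)$ inducing the identity on $\gr$, so $\widehat{\alpha}$ can be defined as the conjugate of the degreewise completion $\widehat{\gr}(\alpha)$, which is manifestly filtration-preserving with $\gr(\widehat{\alpha})=\alpha$. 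Once these two points are in place, the rest is routine diagram-chasing with the isomorphisms of Theorems~\ref{thm:DerivedQuotientMalcev} and~\ref{thm:DerivedQuotientMalcevISO}.
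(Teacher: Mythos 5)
Your argument is correct, but your primary route is longer than the one the paper takes. The paper's proof stays entirely at the graded level: since any morphism of Lie algebras preserves the derived series, $\alpha$ automatically induces $\beta_i\colon \gr(G_1;\k)/\gr(G_1;\k)^{(i)}\to \gr(G_2;\k)/\gr(G_2;\k)^{(i)}$, and filtered-formality enters only through Theorem~\ref{thm:DerivedQuotientMalcevISO}, which says the comparison maps $\Psi_{G_j}^{(i)}$ are isomorphisms; one then sets $\alpha_i=\Psi_{G_2}^{(i)}\circ\beta_i\circ\bigl(\Psi_{G_1}^{(i)}\bigr)^{-1}$ and is done. You instead lift $\alpha$ to a filtered morphism of Malcev Lie algebras, push it through Theorem~\ref{thm:DerivedQuotientMalcev} via the continuity-and-closure argument, and pass back to associated graded objects --- in effect re-proving the relevant part of Theorem~\ref{thm:DerivedQuotientMalcevISO} rather than citing it (you do note the shorter identification via $\Psi_{G_j}^{(i)}$ parenthetically when discussing well-definedness, which is exactly the paper's proof). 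Your detour is sound: the lift $\widehat{\alpha}$ exists because filtered-formality provides isomorphisms $\fm(G_j;\k)\cong\widehat{\gr}(G_j;\k)$ inducing the identity on $\gr$, the closure bookkeeping works since filtration-preserving maps are continuous, and the final isomorphism statement follows from compatibility with composition. What the Malcev-level route buys is a self-contained construction that makes the dependence on Theorem~\ref{thm:DerivedQuotientMalcev} explicit; what the paper's route buys is brevity, since the hard analytic content is already packaged in Theorem~\ref{thm:DerivedQuotientMalcevISO}.
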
 

\begin{proof}
Fix an index $i\ge 1$, and consider the following diagram of 
graded Lie algebras:  
\begin{equation}
\begin{gathered}
\xymatrix{
\gr(G_1;\k)\ar[d]^{\alpha} \ar@{->>}[r] &\gr(G_1;\k)/\gr(G_1;\k)^{(i)} 
\ar[r]^(.58){\Psi_{G_1}^{(i)}}\ar@{.>}[d]^{\beta_i} &\gr(G_1/G_1^{(i)};\k)\ar@{.>}[d]^{\alpha_i}\\ 
\gr(G_2;\k) \ar@{->>}[r]  &\gr(G_2;\k)/\gr(G_2;\k)^{(i)} 
\ar[r]^(.58){\Psi_{G_2}^{(i)}} &\gr(G_2/G_2^{(i)};\k)
}
\end{gathered}
\end{equation}
The morphism $\alpha$ induces a morphism $\beta_i$ between the 
respective solvable quotients.  By Theorem \ref{thm:DerivedQuotientMalcevISO}, 
the maps $\Psi_{G_1}^{(i)}$ and $\Psi_{G_2}^{(i)}$
are isomorphisms. We define the desired morphism $\alpha_i$
to be the composition $\Psi_{G_2}^{(i)}\circ\beta_i\circ \big(\Psi_{G_1}^{(i)}\big)^{-1}$. 
The last claim follows at once.
\end{proof}

Taking $i=2$ in the above theorem, we obtain the following corollary.

\begin{corollary}
\label{cor:noniosLie}
Suppose $G_1$ and $G_2$ are two $\k$-filtered-formal groups. 
If $\theta_k(G_1)\neq \theta_k(G_2)$ for some $k\geq 1$, 
then $\gr(G_1,\k)\not\cong \gr(G_2,\k)$, as graded Lie algebras.
\end{corollary}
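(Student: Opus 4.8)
The plan is to argue by contraposition, deriving from an isomorphism $\gr(G_1,\k)\cong \gr(G_2,\k)$ the equality of all Chen ranks, which contradicts the hypothesis. First I would invoke Theorem \ref{thm:Chenobstruction} with the index $i=2$: since $G_1$ and $G_2$ are both $\k$-filtered-formal, an isomorphism $\alpha\colon \gr(G_1,\k)\xrightarrow{\ \sim\ } \gr(G_2,\k)$ induces a morphism $\alpha_2\colon \gr(G_1/G_1'';\k)\to \gr(G_2/G_2'';\k)$ between the corresponding (second) Chen Lie algebras. Applying the same theorem to $\alpha^{-1}$ produces a morphism in the opposite direction, and one checks from the explicit construction $\alpha_2=\Psi_{G_2}^{(2)}\circ\beta_2\circ(\Psi_{G_1}^{(2)})^{-1}$ that these two induced maps are mutually inverse, because $\beta_2$ (the map on metabelian quotients induced by $\alpha$) is functorial in $\alpha$ and $\Psi_{G_j}^{(2)}$ are fixed isomorphisms. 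Hence $\gr(G_1/G_1'';\k)\cong \gr(G_2/G_2'';\k)$ as graded Lie algebras.

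Next I would pass to dimensions: an isomorphism of graded Lie algebras restricts to a linear isomorphism in each degree $k$, so $\dim_\k \gr_k(G_1/G_1'';\k)=\dim_\k \gr_k(G_2/G_2'';\k)$ for every $k\ge 1$. By the definition of the Chen ranks, $\theta_k(G_j)=\dim_\k \gr_k(G_j/G_j'';\k)$, so this says precisely $\theta_k(G_1)=\theta_k(G_2)$ for all $k$. This contradicts the assumption that $\theta_k(G_1)\neq\theta_k(G_2)$ for some $k$, and therefore no such isomorphism $\alpha$ can exist; that is, $\gr(G_1,\k)\not\cong \gr(G_2,\k)$.

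In fact, since Theorem \ref{thm:Chenobstruction} already records the implication ``$\gr(G_1;\k)\cong \gr(G_2;\k)$ $\Rightarrow$ $\gr(G_1/G_1^{(i)};\k)\cong \gr(G_2/G_2^{(i)};\k)$'' directly, the cleanest write-up is simply to specialize that implication to $i=2$, take graded dimensions, and read off the Chen ranks. The only point that needs a word of care is the reduction to an honest two-sided isomorphism of Chen Lie algebras rather than just a one-sided morphism, and this is immediate from the functoriality built into the proof of Theorem \ref{thm:Chenobstruction}; there is no substantive obstacle here, as the corollary is essentially a bookkeeping consequence of that theorem.
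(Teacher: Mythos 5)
Your argument is correct and matches the paper's proof, which likewise just specializes Theorem \ref{thm:Chenobstruction} to $i=2$ and compares graded dimensions to read off the Chen ranks. The extra remark on the two induced maps being mutually inverse is a reasonable point of care, but it is already covered by the final assertion of that theorem.
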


\section{Taylor expansions and formality properties}
\label{sec:expformal}

In this section we relate the notions of Taylor expansion and filtered-formality 
for a finitely generated group $G$.

\subsection{Taylor expansions and isomorphisms of filtered Lie algebras}
\label{subsec:tff}

As the next theorem shows, Taylor expansions of $G$ 
are intimately related to isomorphisms between the Malcev Lie 
algebra $\fm(G;\k)$ and the LCS completion of the associated 
graded Lie algebra $\gr(G;\k)$.

\begin{theorem}
\label{thm:expansionFiltered}
There is a one-to-one correspondence between Taylor expansions 
$G\to \widehat{\gr}(\k G)$ and filtration-preserving Lie algebra isomorphisms 
$\fm(G;\k)\to \widehat{\gr}(G;\k)$ inducing the identity on $\gr(G,\k)$.  
\end{theorem}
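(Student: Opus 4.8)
The plan is to bootstrap Theorem \ref{thm:expansionFiltered} from the Hopf-algebra correspondence already established in Theorem \ref{thm:TaylorHopf}, passing between the group-algebra picture and the Lie-algebra picture via primitive elements. Recall that Theorem \ref{thm:TaylorHopf} gives a bijection between Taylor expansions $E\colon G\to\widehat{\gr}(\k G)$ and filtration-preserving isomorphisms of complete Hopf algebras $\Phi\colon\widehat{\k G}\to\widehat{\gr}(\k G)$ inducing the identity on $\gr(\k G)$. The key observation is that such a Hopf-algebra isomorphism $\Phi$ restricts to a map on primitive elements. On the source side, the primitives of $\widehat{\k G}$ are by definition the Malcev Lie algebra $\fm(G;\k)$. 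On the target side, since $\widehat{\gr}(\k G)=\widehat{U}(\widehat{\gr}(G;\k))$ (this follows from Quillen's identification $\gr(\k G)\cong U(\gr(G;\k))$ together with completion, as recorded in Section \ref{subsec:malcev}), the primitives of $\widehat{\gr}(\k G)$ are exactly $\widehat{\gr}(G;\k)$. Thus $\Phi$ restricts to a filtration-preserving Lie algebra morphism $\fm(G;\k)\to\widehat{\gr}(G;\k)$, and since $\gr(\Phi)=\id$ on the enveloping algebras, passing to primitives shows the restriction induces the identity on $\gr(G,\k)$; a morphism of complete filtered Lie algebras inducing the identity on associated graded is an isomorphism, by the Lie-algebra analogue of Lemma \ref{lem:completefiltration}.

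Conversely, given a filtration-preserving Lie algebra isomorphism $\varphi\colon\fm(G;\k)\to\widehat{\gr}(G;\k)$ inducing the identity on $\gr(G,\k)$, I would apply the functor ``completed universal enveloping algebra'' to obtain a filtered Hopf algebra isomorphism $\widehat{U}(\varphi)\colon\widehat{U}(\fm(G;\k))\to\widehat{U}(\widehat{\gr}(G;\k))$, i.e.\ $\widehat{\k G}\to\widehat{\gr}(\k G)$, which induces the identity on $\gr(\k G)=U(\gr(G,\k))$ because $U$ is functorial and $\gr(\varphi)=\id$. Feeding this Hopf-algebra isomorphism into Theorem \ref{thm:TaylorHopf} (equivalently Proposition \ref{prop:TaylorExp}) produces a Taylor expansion of $G$. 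The two constructions are mutually inverse: restricting to primitives and then re-enveloping returns the original Hopf isomorphism because a filtered Hopf algebra map between these completed enveloping algebras is determined by its action on primitives (in characteristic zero, every element of $\widehat{U}(\fg)$ is built from primitives, and the comultiplication-compatibility pins the map down); and enveloping and then restricting to primitives returns $\varphi$ by construction. Chaining this bijection with the one in Theorem \ref{thm:TaylorHopf} yields the desired one-to-one correspondence.

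The main obstacle, and the step I would treat most carefully, is the identification of primitives of $\widehat{\gr}(\k G)$ with $\widehat{\gr}(G;\k)$ together with the claim that the passage ``primitives $\leftrightarrow$ completed enveloping algebra'' is an equivalence compatible with filtrations. For the group-algebra side this is Quillen's theorem, quoted in the excerpt as $\widehat{U}(\fm(G,\k))\cong\widehat{\k G}$. For the associated-graded side one needs that $\gr(\k G)\cong U(\gr(G,\k))$ (Quillen \cite{Quillen68}, also quoted) upgrades after completion to $\widehat{\gr}(\k G)\cong\widehat{U}(\widehat{\gr}(G;\k))$ with matching filtrations, and that the graded Milnor--Moore theorem identifies the primitives of the graded Hopf algebra $\gr(\k G)$ with $\gr(G,\k)$ — which is also stated in the excerpt (``the set of all primitive elements in $\gr(\k G)$ forms a graded Lie algebra, which is isomorphic to $\gr(G,\k)$''). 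One then has to check that taking primitives commutes with the inverse limits defining the completions, so that primitives of $\widehat{\gr}(\k G)$ are the degree completion of $\gr(G,\k)$; this is a diagram-chase with the filtration, using that the filtration is complete and separated on both sides.

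Finally, I would note the two compatibility checks that make the correspondence genuinely bijective rather than merely a pair of maps: first, that restricting a filtered Hopf isomorphism to primitives preserves the filtration and the associated-graded-identity condition (immediate, since the Lie filtration on $\fm(G;\k)$ and on $\widehat{\gr}(G;\k)$ is induced from the ambient algebra filtration); and second, that $\widehat{U}$ applied to a filtered Lie isomorphism with $\gr=\id$ again has $\gr=\id$ (functoriality of $U$ plus compatibility of $\gr$ with $U$). With these in hand, the theorem follows by composing the two established bijections, and in particular both halves of Theorem \ref{thm:intro expansion}\eqref{tt1} and its companion statements in the introduction are recovered.
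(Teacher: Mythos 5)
Your proposal is correct and follows essentially the same route as the paper: pass through the Hopf-algebra correspondence of Propositions \ref{prop:Taylor} and \ref{prop:TaylorExp} (i.e.\ Theorem \ref{thm:TaylorHopf}), then move between $\widehat{\k G}\cong \widehat{U}(\fm(G;\k))$ and $\widehat{\gr}(\k G)\cong \widehat{U}(\widehat{\gr}(G;\k))$ by taking primitives in one direction and completed universal enveloping algebras in the other. The paper's proof is terser — it does not spell out that $\Prim$ and $\widehat{U}$ are mutually inverse or that primitives commute with completion — so your extra care there is a refinement rather than a deviation.
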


\begin{proof}
First suppose $E\colon G\to \widehat{\gr}(\k G)$ is a Taylor expansion.  
Then, by Proposition \ref{prop:Taylor}, 
there is a filtration-preserving Hopf algebra isomorphism   
$\widehat{E}\colon \widehat{\k{G}} \to \widehat{\gr}(\k G)$, inducing the identity 
on $\gr(\k{G})$.   
Recall that $\widehat{\k{G}}\cong U(\fm(G;\k))$ and $\widehat{\gr}(\k G)\cong
U(\widehat{\gr}(G;\k))$, as filtered Hopf algebras.   Taking primitives, 
we obtain a filtration-preserving  isomorphism of complete Lie algebras, 
$\Prim(\widehat{E})\colon \fm(G;\k)\to \widehat{\gr}(G;\k)$,  inducing the identity 
on $\gr(G;\k)$.   
 
Now suppose there is an isomorphism of filtered, complete Lie algebras, 
$\alpha\colon \fm(G;\k)\to \widehat{\gr}(G;\k)$, such that $\gr(\alpha)=\id$.
Taking universal enveloping algebras, we obtain an 
isomorphism of filtered, complete Hopf algebras,  
$U(\alpha)\colon\widehat{\k G}\isom\widehat{\gr}(\k G)$, 
such that $\gr(\phi)=\id$. 
By Proposition \ref{prop:TaylorExp}, the map $U(\alpha)$ induces  
a Taylor expansion $E\colon G\to \widehat{\gr}(\k G)$.
\end{proof}

Using this theorem, we obtain in Corollaries \ref{cor:TaylorFilteredFormal} 
and \ref{cor:expansionFormal} alternate interpretations of filtered-formality and
$1$-formality.

\begin{corollary}
\label{cor:TaylorFilteredFormal}
A finitely generated group $G$ has a Taylor expansion if and only if $G$ is filtered-formal. 
\end{corollary}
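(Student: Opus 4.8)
The plan is to deduce Corollary \ref{cor:TaylorFilteredFormal} directly from Theorem \ref{thm:expansionFiltered} together with the definition of filtered-formality (Definition \ref{def:MalcevLie}). The key observation is that both sides of the ``if and only if'' are, after unwinding definitions, statements about the existence of a filtration-preserving Lie algebra isomorphism $\fm(G;\k)\to\widehat{\gr}(G;\k)$ inducing the identity on associated graded Lie algebras.

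First I would prove the forward direction. Suppose $G$ has a Taylor expansion $E\colon G\to\widehat{\gr}(\k G)$. By Theorem \ref{thm:expansionFiltered}, the existence of such an expansion produces a filtration-preserving Lie algebra isomorphism $\fm(G;\k)\to\widehat{\gr}(G;\k)$ inducing the identity on $\gr(G,\k)$. This is precisely the condition in Definition \ref{def:MalcevLie}, so $G$ is filtered-formal.

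For the converse, suppose $G$ is filtered-formal. By Definition \ref{def:MalcevLie}, there is a filtered Lie algebra isomorphism $\fm(G;\k)\to\widehat{\gr}(G;\k)$ inducing the identity on associated graded Lie algebras. Invoking again the one-to-one correspondence of Theorem \ref{thm:expansionFiltered}, this isomorphism corresponds to a Taylor expansion $G\to\widehat{\gr}(\k G)$. Hence $G$ admits a Taylor expansion.

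There is really no main obstacle here: once Theorem \ref{thm:expansionFiltered} is in hand, the corollary is a matter of matching its conclusion against the chosen definition of filtered-formality. The only point requiring a word of care is that Definition \ref{def:MalcevLie} already builds in the ``inducing the identity on associated graded Lie algebras'' clause, so the correspondence of Theorem \ref{thm:expansionFiltered} applies on the nose; if one started instead from a weaker-looking formulation of filtered-formality, one would cite \cite[Lemma 2.4]{SW-formal} to upgrade an abstract isomorphism $\fm(G;\k)\cong\widehat{\gr}(\g)$ to one with the identity on associated gradeds, but that is not needed given the definition used in this paper.
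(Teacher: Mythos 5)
Your proposal is correct and follows exactly the paper's own route: the paper proves this corollary by noting that it ``follows at once from Theorem \ref{thm:expansionFiltered} and Definition \ref{def:MalcevLie},'' which is precisely the matching of the two sides that you carry out. Your added remark about \cite[Lemma 2.4]{SW-formal} being unnecessary given the definition used here is accurate.
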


\begin{proof}
 Follows at once from Theorem \ref{thm:expansionFiltered} and Definition \ref{def:MalcevLie}. 
\end{proof}

\begin{theorem} 
\label{thm:Taylorpropagation}
Let $G$ be a finitely generated group. 
\begin{enumerate}
\item \label{item:splitinjectionT} 
Suppose there is a split monomorphism $\iota\colon K\rightarrow G$.  
If $G$ has a Taylor expansion, then $K$ also has a Taylor expansion. 

\item \label{item:fieldextensionT}   The group $G$ has a Taylor expansion 
over a field $\k$ of characteristic $0$ if and only if $G$ has a Taylor expansion 
over $\Q$.

\item \label{item:productsT}  
$G_1$ and $G_2$ have a Taylor expansion if and only if
 $G_1* G_2$  has a Taylor expansion if and only if
 $G_1\times G_2$ has a Taylor expansion. 
 
 \item \label{item:solvablequo}  If $G$ has a Taylor expansion, then 
all the solvable quotients $G/G^{(i)}$ have a Taylor expansion.
\end{enumerate} 
\end{theorem}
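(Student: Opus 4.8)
The plan is to reduce everything to the corresponding propagation statements for filtered-formality established in \cite{SW-formal} and recalled here as Theorem \ref{thm:propagation}, together with the Lie-algebra description of Chen-type quotients in Theorem \ref{thm:DerivedQuotientMalcevISO}. The bridge between the two worlds is Corollary \ref{cor:TaylorFilteredFormal}: a finitely generated group has a Taylor expansion if and only if it is filtered-formal. Once one observes that all four operations in the statement preserve finite generation, each assertion becomes a purely formal transcription. Concretely: for \eqref{item:splitinjectionT}, if $\iota\colon K\to G$ is split injective and $G$ has a Taylor expansion, then $G$ is filtered-formal by Corollary \ref{cor:TaylorFilteredFormal}, hence $K$ is filtered-formal by Theorem \ref{thm:propagation}\eqref{item:splitinjection}, hence $K$ has a Taylor expansion, again by Corollary \ref{cor:TaylorFilteredFormal}. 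Parts \eqref{item:fieldextensionT} and \eqref{item:productsT} follow identically from parts \eqref{item:fieldextension} and \eqref{item:products} of Theorem \ref{thm:propagation}, noting only that $G_1*G_2$ and $G_1\times G_2$ are finitely generated when $G_1$ and $G_2$ are.

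For part \eqref{item:solvablequo}, I would argue as follows. Suppose $G$ has a Taylor expansion; then $G$ is filtered-formal. By Theorem \ref{thm:DerivedQuotientMalcevISO}, for every $i\ge 2$ the derived quotient $G/G^{(i)}$ is again filtered-formal. Since $G/G^{(i)}$ is a quotient of the finitely generated group $G$, it is finitely generated, so Corollary \ref{cor:TaylorFilteredFormal} applies once more and yields a Taylor expansion $G/G^{(i)}\to \widehat{\gr}(\k(G/G^{(i)}))$. The case $i\le 1$ is vacuous, since $G/G^{(0)}$ and $G/G^{(1)}$ are trivial, respectively abelian; for the abelianization one may also invoke the explicit expansion of free abelian groups from \S\ref{sec:magnus-abel}, but this is not needed.

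I do not expect a serious obstacle here, since the real content has already been isolated in Theorem \ref{thm:propagation} and Theorem \ref{thm:DerivedQuotientMalcevISO}; the only point requiring a moment's care is that Corollary \ref{cor:TaylorFilteredFormal} is stated for finitely generated groups, so in each case one must check that the group produced by the operation is still finitely generated. For split injections this is part of the hypothesis that $K$ be finitely generated (which one should state, or deduce from $K$ being a retract of $G$), for products and coproducts it is immediate, and for solvable quotients it is automatic. If one wishes, the correspondence can be made more explicit by tracking, via Theorem \ref{thm:expansionFiltered}, how the filtered Lie algebra isomorphism $\fm(G;\k)\to\widehat{\gr}(G;\k)$ restricts or descends under each operation — for instance, for the solvable quotient one passes to the quotient by the closure of the $i$-th derived ideal, using Theorems \ref{thm:DerivedQuotientMalcev} and \ref{thm:DerivedQuotientMalcevISO} — but for the existence statement the black-box argument above suffices.
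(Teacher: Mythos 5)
Your proposal is correct and follows exactly the paper's own argument: parts (1)--(3) are deduced from Corollary \ref{cor:TaylorFilteredFormal} together with Theorem \ref{thm:propagation}, and part (4) from Corollary \ref{cor:TaylorFilteredFormal} together with Theorem \ref{thm:DerivedQuotientMalcevISO}. The only difference is that you spell out the (routine) finite-generation checks, which the paper leaves implicit.
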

\begin{proof}
The first three claims follow from Corollary \ref{cor:TaylorFilteredFormal}
and Theorem \ref{thm:propagation}. 
Claim \eqref{item:solvablequo} follows from Corollary \ref{cor:TaylorFilteredFormal}
and Theorem \ref{thm:DerivedQuotientMalcevISO}. 
\end{proof}
 
\begin{corollary}
\label{cor:expansionFormal}
A finitely generated group $G$ is $1$-formal if and only if there is  
a Taylor expansion $G \to  \widehat{\gr}(\k{G})$ 
and $\gr(\k G)$ is a quadratic algebra.
\end{corollary}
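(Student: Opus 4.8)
The plan is to derive Corollary \ref{cor:expansionFormal} by combining the characterization of filtered-formality via Taylor expansions (Corollary \ref{cor:TaylorFilteredFormal}) with the graded side of the story, using the fact recorded in \S\ref{subsec:ff} that a group is $1$-formal precisely when it is both graded-formal and filtered-formal. So the first step is to recall that $1$-formality of $G$ means, by definition, that $\fm(G,\k)\cong\widehat{\fh}(G,\k)$ as filtered Lie algebras, and that this is equivalent to the conjunction of graded-formality ($\psi_G\colon\fh(G,\k)\to\gr(G,\k)$ is an isomorphism) and filtered-formality ($\fm(G,\k)\cong\widehat{\gr}(G,\k)$).

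Next I would translate the graded-formality condition into a statement about $\gr(\k G)$ being quadratic. The holonomy Lie algebra $\fh(G,\k)=\lie(V)/\langle\im\mu_G^\vee\rangle$ is quadratic by construction, so if $\psi_G$ is an isomorphism then $\gr(G,\k)$ is quadratic, and since (by Quillen \cite{Quillen68}) $\gr(\k G)\cong U(\gr(G,\k))$, the PBW theorem tells us $\gr(\k G)$ is the universal enveloping algebra of a quadratic Lie algebra, hence is itself a quadratic algebra. Conversely, if $\gr(\k G)$ is a quadratic algebra, one recovers that $\gr(G,\k)$—its Lie algebra of primitives—is quadratic; since $\psi_G$ is always a surjection inducing isomorphisms in degrees $1$ and $2$ between a quadratic Lie algebra $\fh(G,\k)$ and the now-quadratic Lie algebra $\gr(G,\k)$, and both are generated in degree $1$ with relations in degree $2$ matching under $\psi_G$, the map $\psi_G$ is forced to be an isomorphism, i.e.\ $G$ is graded-formal.

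Putting the pieces together: $G$ is $1$-formal $\iff$ $G$ is graded-formal and filtered-formal $\iff$ ($\gr(\k G)$ is quadratic) and ($G$ has a Taylor expansion), where the last equivalence uses Corollary \ref{cor:TaylorFilteredFormal} for the filtered-formal half and the quadraticity discussion above for the graded-formal half. This is exactly the assertion of the corollary.

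The main obstacle I anticipate is the careful bookkeeping in the step ``$\gr(\k G)$ quadratic $\Rightarrow$ $\psi_G$ is an isomorphism.'' One must be precise that ``quadratic algebra'' for an associative algebra (generated in degree $1$, relations generated in degree $2$) passes correctly through the universal enveloping functor to ``quadratic Lie algebra,'' and that the comparison map $\psi_G$, known only to be surjective and a degree-$\le 2$ isomorphism a priori, is actually an isomorphism once the target is also quadratic on the same generators and relations. This is where one should either cite the relevant statement from \cite{SW-holo} or spell out that a surjection between quadratic objects presented by the same degree-$1$ generators and inducing an isomorphism on degree-$2$ relation spaces must be an isomorphism. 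Everything else is a formal consequence of results already in the excerpt.
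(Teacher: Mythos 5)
Your proof follows the same route as the paper: decompose $1$-formality as filtered-formality plus graded-formality, handle the first half via Corollary \ref{cor:TaylorFilteredFormal}, and identify graded-formality with quadraticity of $\gr(\k G)$. The one delicate point you flag---that quadraticity of the associative algebra $\gr(\k G)$ is equivalent to quadraticity of the Lie algebra $\gr(G;\k)$ (equivalently, to $\psi_G$ being an isomorphism)---is exactly what the paper outsources to \cite[\S 2.2.3]{Lee} (rather than to \cite{SW-holo}), so your instinct to cite this step rather than reprove it is the right one.
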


\begin{proof}
We know that $G$ is $1$-formal if 
and only if $G$ is filtered-formal and graded-formal.  By 
Corollary \ref{cor:TaylorFilteredFormal}, 
$G$ is filtered-formal if and only if it has a Taylor expansion.
On the other hand, $G$ is graded-formal if and only if $\gr(G;\k)$ admits a
quadratic presentation.  
As shown in \cite[\S 2.2.3]{Lee}, this latter condition is equivalent to
the quadraticity of $\gr(\k{G})$. This completes the proof. 
\end{proof}

\begin{example}
\label{ex:ReducedFree}
The reduced free group $RF_n$, introduced by J.~Milnor 
in his study of link homotopy \cite{Milnor54} is the quotient of the free group 
 $F_n=\langle x_1,\dots, x_n\rangle$ by the  normal subgroup 
generated by all elements of the form $[x_i, gx_ig^{-1}]$ with $g\in F_n$.
The relations in $RF_n$ can be reduced to multiple group commutators 
in $x_1, \dots, x_n$ with some $x_i$ appears at least twice.
In \cite{Linxiaosong97}, Lin showed that $RF_n$ has 
Taylor expansions induced from certain expansions of the 
free group $F_n$ (the power series expansion and 
the expansion arising from formal power series connections, 
as described in \S\ref{sec:magnus}).
It follows from Corollary \ref{cor:TaylorFilteredFormal} that the 
group $RF_n$ is filtered-formal.
\end{example}

\subsection{Taylor expansions of nilpotent groups }
\label{sec: TaylorNilpotent}

As before, let $G$ be a finitely generated group. 
The next result shows that the Taylor expansions of $G$ are 
inherited by the nilpotent quotients $G/\Gamma_{i}G$.

\begin{theorem}
\label{thm:nilp-taylor}
Suppose $G$ admits a 
Taylor expansion $E\colon G\to \widehat{\gr}(\k G)$.  
Then each nilpotent quotient $G/\Gamma_{i}G$ admits 
an induced Taylor expansion,  
$E_i\colon G/\Gamma_{i}G\to \widehat{\gr}(\k [G/\Gamma_iG])$.
\end{theorem}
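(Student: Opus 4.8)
The plan is to use the characterization of Taylor expansions in terms of filtered-formality. By Corollary~\ref{cor:TaylorFilteredFormal}, the hypothesis that $G$ admits a Taylor expansion is equivalent to $G$ being filtered-formal, so it suffices to show that each nilpotent quotient $G/\Gamma_i G$ is filtered-formal; the desired expansion $E_i$ is then produced by applying Corollary~\ref{cor:TaylorFilteredFormal} in the other direction. Thus the real content is an assertion about filtered-formality rather than about expansions per se, and I would state and prove it as such.

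First I would recall the standard facts relating the two filtered Lie algebras in play. Since $\Gamma_i G$ is a characteristic subgroup, the quotient map $q_i\colon G\surj G/\Gamma_i G$ induces a surjection $\gr(q_i)\colon \gr(G;\k)\surj \gr(G/\Gamma_i G;\k)$, which is an isomorphism in degrees $<i$ and has kernel the ideal generated by everything in degree $\ge i$; concretely $\gr(G/\Gamma_i G;\k)$ is the truncation $\gr(G;\k)/\gr(G;\k)_{\ge i}$ (one should check this elementary identification, e.g.\ as in \cite{SW-formal} or the references therein). On the Malcev side, one has the classical fact that $\fm(G/\Gamma_i G;\k)\cong \fm(G;\k)/\overline{\Gamma_i \fm(G;\k)}$, the quotient by the closure of the $i$-th term of the lower central series of the Malcev Lie algebra — this is the nilpotent analogue of Theorem~\ref{thm:DerivedQuotientMalcev}, and for nilpotent quotients it is unconditional (it holds for every finitely generated group).

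Now I would feed in filtered-formality of $G$: choose a filtered Lie algebra isomorphism $\varphi\colon \fm(G;\k)\isom \widehat{\gr}(G;\k)$ inducing the identity on associated graded. Because $\varphi$ is filtration-preserving and $\gr(\varphi)=\id$, it carries $\overline{\Gamma_i\fm(G;\k)}$ onto $\overline{\Gamma_i\widehat{\gr}(G;\k)}$, so it descends to an isomorphism of the respective quotients, still inducing the identity on associated graded Lie algebras. Combining with the two identifications of the previous paragraph, the left-hand quotient is $\fm(G/\Gamma_i G;\k)$ and the right-hand quotient is (the degree completion of) the truncated graded Lie algebra $\gr(G;\k)/\gr(G;\k)_{\ge i}\cong \gr(G/\Gamma_i G;\k)$; note that for a nilpotent graded Lie algebra the degree completion agrees with the algebra itself, so no completion subtleties intervene. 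Hence $\fm(G/\Gamma_i G;\k)\cong \widehat{\gr}(G/\Gamma_i G;\k)$ as filtered Lie algebras, with the identity on associated graded, i.e.\ $G/\Gamma_i G$ is filtered-formal. Applying Corollary~\ref{cor:TaylorFilteredFormal} yields the Taylor expansion $E_i$, and a little extra bookkeeping (tracking $\widehat{E}$ through Theorem~\ref{thm:TaylorHopf} and the quotient map) shows $E_i$ may be taken to be the expansion \emph{induced} by $E$, as claimed.

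The main obstacle I anticipate is purely a matter of verifying compatibilities rather than any deep difficulty: one must make sure that the closure $\overline{\Gamma_i\fm(G;\k)}$ in the filtration topology is exactly what matches the kernel of $\gr(G;\k)\surj\gr(G/\Gamma_i G;\k)$ under the isomorphism $\gr(\fm(G;\k))\cong\gr(G;\k)$, and that $\varphi$ genuinely respects these closures (which follows from $\varphi$ being a filtered isomorphism with $\gr(\varphi)=\id$, but deserves a sentence). A secondary point to get right is that the ``induced'' expansion on $G/\Gamma_i G$ is well-defined at the level of $\widehat{\k G}$: the natural map $\widehat{\k G}\surj\widehat{\k[G/\Gamma_i G]}$ together with $\widehat{E}$ should be shown to factor compatibly through the corresponding quotient $\widehat{\gr}(\k G)\surj\widehat{\gr}(\k[G/\Gamma_i G])$, which again is routine given that $\gr(\widehat E)=\id$. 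Everything else is formal.
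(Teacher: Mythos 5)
Your proposal is correct and follows essentially the same route as the paper: both translate the Taylor expansion into a filtered Lie algebra isomorphism $\fm(G;\k)\isom\widehat{\gr}(G;\k)$ inducing the identity on associated graded (via Theorem~\ref{thm:expansionFiltered}, equivalently Corollary~\ref{cor:TaylorFilteredFormal}), show that this isomorphism descends to the nilpotent quotients, and translate back. The only real difference is that the paper outsources the descent step to the proof of \cite[Theorem~7.13]{SW-formal}, whereas you prove it directly via the identifications $\fm(G/\Gamma_iG;\k)\cong\fm(G;\k)/\overline{\Gamma_i\fm(G;\k)}$ and $\gr(G/\Gamma_iG;\k)\cong\gr(G;\k)/\gr(G;\k)_{\ge i}$, which are precisely the facts underlying that cited proof.
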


\begin{proof}
By Theorem \ref{thm:expansionFiltered}, the Taylor expansion 
$E\colon G\to \widehat{\gr}(\k G)$ determines a filtered 
Lie algebra isomorphism, $\alpha\colon \fm(G;\k)\to \widehat{\gr}(G;\k)$. 
From the proof of \cite[Theorem 7.13]{SW-formal}, we deduce that 
$\alpha$ induces filtered Lie algebra isomorphisms,
$\alpha_i\colon \fm(G/\Gamma_iG;\k)\to \widehat{\gr}(G/\Gamma_iG;\k)$.
Using Theorem \ref{thm:expansionFiltered} again, we obtain the desired 
Taylor expansions, $E_i\colon G/\Gamma_{i}G \to \widehat{\gr}(G/\Gamma_{i}G;\k)$.
\end{proof}

\begin{example}
\label{ex:nstep}
As noted in \S \ref{sec:magnus}, the finitely generated free group $F$ admits 
Taylor expansions. By Theorem \ref{thm:nilp-taylor}, 
the $k$-step, free nilpotent group $F/\Gamma_{k+1}F$ admits Taylor expansions
for each $k\geq 1$. 
\end{example}
 
\begin{example}
\label{ex:step2nilpotent}
Let $G$ be a finitely generated, torsion-free, $2$-step nilpotent group, 
and suppose $G_{\ab}$ is also torsion-free.  As shown in \cite{SW-formal}, 
the group $G$ is filtered-formal.  Thus, by Corollary \ref{cor:TaylorFilteredFormal}, 
$G$ admits a Taylor expansion. 
\end{example}

\begin{example}
\label{ex:Cornulier}
Let $\fm$ be the $5$-dimensional, nilpotent Lie algebra with non-zero 
Lie brackets given by $[e_1,e_3]=e_4$ and $[e_1,e_4]=[e_2,e_3]=e_5$. 
This Lie algebra may be realized as the Malcev Lie algebra of 
a finitely generated, torsion-free nilpotent group $G$.  
As noted in \cite{Cornulier14, SW-formal}, this group is not filtered-formal. 
Thus, the group $G$ admits no Taylor expansion.
\end{example}

\section{Automorphisms of free groups and almost-direct products}
\label{sec:exp-rtfn}

\subsection{Braid groups}
\label{subsec:BraidGroups}

An automorphism of the free group $F_n=\langle x_1,\dots, x_n\rangle$ 
is a permutation-conju\-gacy automorphism if it sends each generator 
$x_i$ to a conjugate of some other generator $x_j$. 
The Artin braid group $B_n$ is the subgroup of $\Aut(F_n)$ 
consisting of all permutation-conjugacy automorphisms 
which fix the product $x_1\cdots x_n$.   As shown for instance in \cite{Birman74},   
the group $B_n$ is generated by the elementary braids 
$\sigma_1,\dots ,\sigma_{n-1}$ (where $\sigma_i$ sends $x_i$ to $x_{i+1}$ 
and $x_{i+1}$ to $x_{i+1}^{-1}x_ix_{i+1}$ while fixing the other $x_k$'s), 
subject to the relations 
\begin{equation}
\label{eq:braidrelations}
\begin{cases}
\sigma_{i} \sigma_{j}= 
\sigma_{j} \sigma_{i}, &\abs{i-j}\geq 2,\\
\sigma_{i} \sigma_{i+1} \sigma_{i} = 
\sigma_{i+1} \sigma_{i} \sigma_{i+1},
& 1\le i\le n-2.
\end{cases}
\end{equation}

The pure braid group $P_n$ is the kernel of the canonical projection  
$B_n\to S_n$ that sends a generator $\sigma_i$ to the 
transposition $(i,i+1)$.  This group is generated by the braids 
\begin{equation}
\label{eq:GeneratorsPureBraid}
A_{ij}:=(\sigma_{j-1}\sigma_{j-2}\cdots\sigma_{i+1}) \sigma_i^2 
(\sigma_{j-1}\sigma_{j-2}\cdots\sigma_{i+1})^{-1}, 
\textrm{ for } 1\leq i<j\leq n.
\end{equation}
The pure braid group $P_n$ decomposes as a semidirect product, 
$P_n=F_{n-1}\rtimes P_{n-1}$, where $P_{n-1}$ acts  on $F_{n-1}$ 
by restriction of the Artin representation $B_{n-1}\subset \Aut(F_{n-1})$.  
The group $P_n$ is $1$-formal.  The associated graded Lie algebra $\gr(P_n,\k)$ 
is generated by $\{t_{ij} \mid 1\leq i < j\leq n\}$ subject to the relations 
$[t_{ij}, t_{kl}]=0$ and $[t_{ij}, t_{ik}+t_{jk}]=0$ whenever $i,j,k,l$ are distinct.

\subsection{Taylor expansions for the pure braid groups}
\label{subsec:TaylorPn}

Explicit Taylor expansions for the pure braid groups $P_n$ over 
$\k=\C$ can be constructed using Chen's method of iterated integrals, 
see e.g. \cite{Linxiaosong97, Bar-Natan16, Kohno16}.  
Let $\textrm{Conf}_n(\C)=\C^n\setminus \bigcup_{1\leq i< j\leq n} \{ z_i= z_j \}$ 
be the configuration space of $n$ ordered points in $\C$, 
so that $P_n=\pi_1(\textrm{Conf}_n(\C), 0)$. Consider 
the logarithmic $1$-forms on $\textrm{Conf}_n(\C)$ given by
\begin{equation}
\label{eq:log-forms}
w_{ij}=w_{ji}=\dfrac{1}{2\pi \sqrt{-1}}\cdot d \log(z_i-z_j)\, .
\end{equation}
Clearly, these $1$-forms are closed.  Furthermore, as shown by Arnold 
\cite{Arnold69}, these $1$-forms satisfy the relations
$w_{ij}\wedge w_{jl}+w_{jl}\wedge w_{li}+w_{li}\wedge w_{ij}=0$. 

As shown by Kohno \cite{Kohno85}, 
the complete Hopf algebra $\widehat{\gr}(\k{P_n})$ admits a presentation 
with generators $\{X_{ij}=X_{ji}; 1\leq i<j\leq n\}$, 
subject to the infinitesimal pure braid relations 
\begin{equation}
\label{eq:purebraidalgebrarelations}
\begin{cases}
[X_{ij}, X_{kl}]=0 \\
[X_{ij}, X_{il}+X_{lj}]=0\, .
\end{cases}
\end{equation}

The formal power series connection $\omega=\sum_{1\leq i<j\leq n} w_{ij} X_{ij}$ 
on $\textrm{Conf}_n(\C)$ is flat.  The corresponding monodromy representation
yields a (faithful) Taylor expansion for the pure braid group, 
$J\colon P_n\to \widehat{\gr}(\k P_n)$, given by \eqref{eq:chenint}, more explicitly, 
as stated in \cite{Bar-Natan16}, (first appeared  in \cite{Kohno88} )
\begin{equation}
\label{eq:chenint-pn}
J(g)=1+ \sum_{k=1}^{\infty}\sum\limits_{1\leq i_1<j_1, \dots, i_k<j_k\leq n} \left(\dfrac{1}{2\pi \sqrt{-1}} \right)^k 
\int\limits_{0\leq t_1\leq \cdots \leq t_k\leq 1} \bigwedge_{s=1}^k 
 d \log(z_{i_s}-z_{j_s} )\, X_{i_1,j_1}\cdots X_{i_k,j_k}\, ,
\end{equation}
where $g\in P_n$ is represented by a piecewise smooth loop 
$\gamma\colon [0,1] \to \textrm{Conf}_n(\C)$ at $0$,
and $z_i$ is the $i$-th coordinate of the loop $\gamma$.

The Taylor expansion $J$ is called the monodromy of the flat connection in \cite{Kohno85}, 
and the holonomy homomorphism in \cite{Kohno16}. This expansion is a finite type invariant 
for the pure braid groups, and a prototype for the Kontsevich integral in knot theory.

\subsection{Welded braid groups}
\label{subsec:WeldedBraidGroups}

The welded braid group (or, the braid-permutation group) 
$\wB_n$ is the subgroup of $\Aut(F_n)$ 
consisting of all permutation-conjugacy automorphisms of $F_n$.  
The welded pure braid group (also known as the group of 
basis-conjugating automorphisms, or McCool group) $\wP_n$ 
is the kernel of the canonical projection $\wP_n\to S_n$. 
In \cite{McCool86}, McCool gave a finite presentation for  
$\wP_n$; the generators are the automorphisms $\alpha_{ij}$ 
($1\leq i\neq j\leq n$) sending $x_i$ to $x_jx_ix_j^{-1}$. 

The subgroup of $\wP_n$ generated by the elements $\alpha_{ij}$ with 
$i>j$ is called the {\em upper welded pure braid group} (or, upper 
triangular McCool group), and is denoted by $\wP_n^+$. 
As shown in \cite{Cohen-P-V-Wu08}, the upper welded 
pure braid group $\wP_n^+$ also decomposes as a 
semidirect product, $\wP_n^+=F_{n-1}\rtimes \wP_{n-1}^+$. 

Work of Berceanu and Papadima from \cite{Berceanu-Papadima09} 
establishes the $1$-formality of the groups $\wP_n$  and $\wP_n^+$.  
Bar-Natan and Dancso, in \cite{Bar-Natan14}, investigate expansions
of welded braid groups. The Chen ranks of the 
groups $P_n$, $\wP_n$,  and $\wP_n^+$ were computed 
in \cite{Cohen-Suciu95},  \cite{Cohen-Schenck15}, and  \cite{SW-mccool}, 
respectively.  We summarize those results, as follows. 

\begin{theorem}[\cite{Cohen-Suciu95,Cohen-Schenck15,SW-mccool}]
\label{thm:ChenRanksLCSRanks}
The Chen ranks of $P_n$, $\wP_n$, and $\wP_n^+$ are given by 
\begin{enumerate}
\item
$\theta_1(P_n)=\binom{n}{2}$, $\theta_2(P_n)=\binom{n}{3}$, and 
$\theta_k(P_n)= (k-1)\binom{n+1}{4}$ for $k\geq 3$.
\item
$\theta_k(\wP_n) = (k-1)\binom{n}{2} + (k^2-1) \binom{n}{3}$  for $k\gg 0$.
\item
$\theta_1(\wP_n^+)=\binom{n}{2}$,  $\theta_2(\wP_n^+)=\binom{n}{3}$, 
and $\theta_k(\wP_n^+)= \binom{n+1}{4} + \sum_{i=3}^k\binom{n+i-2}{i+1}$ 
for $k\geq 3$.
\end{enumerate} 
\end{theorem}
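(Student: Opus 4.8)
This statement is a synthesis of three known computations, so the plan is to first reduce all three cases to one type of question about holonomy Lie algebras, and then to indicate how each of the resulting Hilbert-series computations is carried out. The key reductive input is $1$-formality: the group $P_n$ is classically $1$-formal, while $\wP_n$ and $\wP^+_n$ are $1$-formal by the theorem of Berceanu--Papadima \cite{Berceanu-Papadima09}. Thus each of the three groups $G$ under consideration is simultaneously graded-formal and filtered-formal. Filtered-formality lets me apply Theorem \ref{thm:DerivedQuotientMalcevISO} with $i=2$, which identifies the Chen Lie algebra with the maximal metabelian quotient of the associated graded Lie algebra, $\gr(G/G'',\k)\cong\gr(G,\k)/\gr(G,\k)''$; and graded-formality identifies $\gr(G,\k)$ with the holonomy Lie algebra $\fh(G,\k)=\lie(V)/\langle\im\mu_G^\vee\rangle$, where $V=H_1(G,\k)$. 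This is a finitely presented quadratic Lie algebra: its relations are the infinitesimal pure braid relations \eqref{eq:purebraidalgebrarelations} when $G=P_n$, and for $G=\wP_n$ or $\wP^+_n$ they are extracted from the McCool presentation \cite{McCool86} together with the semidirect-product decomposition $\wP^+_n=F_{n-1}\rtimes\wP^+_{n-1}$ of \cite{Cohen-P-V-Wu08}. Consequently $\theta_1(G)=\dim_\k H_1(G,\k)$, while for $k\ge 2$,
\[
\theta_k(G)=\dim_\k\bigl(\fh(G,\k)'/\fh(G,\k)''\bigr)_k .
\]

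The next step is to make the right-hand side computable via the \emph{infinitesimal Alexander invariant} $\fB(\fh):=\fh'/\fh''$. For a quadratic Lie algebra $\fh=\lie(V)/\langle\fr\rangle$ with $\fr\subseteq\Lambda^2 V$, the space $\fB(\fh)$ is naturally a finitely generated graded module over the polynomial ring $A:=\Sym(V)$, and it has a free presentation
\[
A\otimes_\k\bigl(\Lambda^3 V\oplus\fr\bigr)\longrightarrow A\otimes_\k\Lambda^2 V\longrightarrow\fB(\fh)\longrightarrow 0 ,
\]
where the first arrow is the Koszul differential of $V$ on the $\Lambda^3 V$ summand and the $A$-linear extension of the inclusion $\fr\hookrightarrow\Lambda^2 V$ on the $\fr$ summand (this is the Lie-algebra incarnation of the classical description of Chen ranks through the Alexander invariant; cf. \cite{PS04-imrn} and the references therein). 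Together with the identity from the previous paragraph, this reduces the theorem to computing the Hilbert series of $\fB(\fh(G,\k))$ for the three explicit quadratic Lie algebras at hand.

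From here the three cases diverge. For $P_n$ one uses that the resonance variety $\cR_1(P_n)\subseteq H^1(P_n,\k)$ is a union of pairwise-transverse linear $2$-planes --- $\binom n3$ of local type (one per triple of strands) and $\binom n4$ of non-local type, so $\binom n3+\binom n4=\binom{n+1}{4}$ altogether --- and stratifying $\fB(\fh(P_n))$ over these components shows that, for $k\ge 3$, $\dim_\k\fB(\fh(P_n))_k=\binom{n+1}{4}\,\theta_k(F_2)=(k-1)\binom{n+1}{4}$, the low-degree values being $\theta_1(P_n)=\binom n2$ and $\theta_2(P_n)=\binom n3$ \cite{Cohen-Suciu95}. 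For $\wP_n$ the resonance variety has $\binom n2$ components of dimension $2$ and $\binom n3$ of dimension $3$, and the Chen ranks formula --- which governs only the stable behaviour of the infinitesimal Alexander invariant, hence holds for $k\gg 0$ --- gives $\theta_k(\wP_n)=\binom n2\,\theta_k(F_2)+\binom n3\,\theta_k(F_3)=(k-1)\binom n2+(k^2-1)\binom n3$ \cite{Cohen-Schenck15}, using the free-group values $\theta_k(F_2)=k-1$ and $\theta_k(F_3)=k^2-1$ from Example \ref{ex:gr-free}. For $\wP^+_n$ the resonance variety is more intricate, so instead one combines the explicit presentation of $\fh(\wP^+_n,\k)$ afforded by the iterated semidirect-product structure with an induction on $n$ to evaluate $\dim_\k\fB(\fh(\wP^+_n,\k))_k$ exactly, obtaining $\theta_k(\wP^+_n)=\binom{n+1}{4}+\sum_{i=3}^{k}\binom{n+i-2}{i+1}$ for $k\ge 3$ \cite{SW-mccool}.

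I expect the genuine difficulty to be concentrated entirely in this last stage: the reduction above is soft, but pinning down the complete resonance decomposition for $P_n$ and $\wP_n$ (equivalently, the support and fine multigraded structure of $\fB(\fh)$ over $A$), and running the degree-by-degree bookkeeping in the $\wP^+_n$ induction, are each substantial combinatorial computations --- indeed, they are the main content of the three cited papers. A further point to watch when assembling the statement is the mismatch of ranges: the resonance/Chen-ranks route yields the $\wP_n$ formula only for $k\gg 0$, whereas the more hands-on arguments for $P_n$ and $\wP^+_n$ give theirs from $k=3$ onward (with $\theta_1$ and $\theta_2$ recorded separately), which is why the three items are phrased differently.
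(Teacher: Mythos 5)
The paper gives no proof of this statement at all: it is quoted verbatim as a summary of results from \cite{Cohen-Suciu95}, \cite{Cohen-Schenck15}, and \cite{SW-mccool} (``We summarize those results, as follows''), so there is no internal argument to compare yours against. Judged as a reconstruction of how those references proceed, your outline is essentially accurate and correctly locates the real work: the reduction of $\theta_k(G)$ to the graded pieces of the infinitesimal Alexander invariant $\fB(\fh)=\fh'/\fh''$ of the holonomy Lie algebra, via $1$-formality and Theorem \ref{thm:DerivedQuotientMalcevISO}, is exactly the framework of \cite{PS04-imrn}, and your free presentation of $\fB(\fh)$ over $\Sym(V)$ is the standard one. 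Your numerics also check out: $\binom n3+\binom n4=\binom{n+1}{4}$, $\theta_k(F_2)=k-1$, and $\theta_k(F_3)=k^2-1$ from \eqref{eq:chen-free}. Two small caveats on attribution and range. First, \cite{Cohen-Suciu95} predates the resonance-variety formulation of the Chen ranks; the $P_n$ computation there is an exact, all-degrees determination of the Alexander invariant by explicit (Gr\"obner-basis-type) linear algebra, and it is this exactness --- not the resonance stratification, which by itself only controls $k\gg 0$ --- that delivers the formula already at $k=3$; the resonance reading you give is the later interpretation that \cite{Cohen-Schenck15} turns into a theorem for $k\gg 0$, which is precisely why item (2) carries the weaker range. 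Second, for $\wP_n^+$ the point of \cite{SW-mccool} is in part that the naive resonance prediction fails, and the exact answer again comes from a direct Gr\"obner-basis analysis of $\fB(\fh(\wP_n^+,\k))$; your description is right in spirit, though the ``induction on $n$'' is a guess rather than the stated mechanism. None of this affects the correctness of your reduction; as you say yourself, the substance lives in the three cited papers, which is also the position the paper under review takes.
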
 

\subsection{Distinguishing some related Lie algebras}
\label{subsec:discuss}

Both the pure braid groups $P_n$ and the upper McCool groups $\wP_n^+$ 
are iterated semidirect products of the form $F_{n-1}\rtimes \dots \rtimes F_2\rtimes F_1$. 
Clearly, $P_1=\wP_1^+ =\{1\}$ and $P_2=\wP_2^+ =\Z$; it is also known that 
$P_3\cong \wP_3^+\cong F_2\times F_1$.  Furthermore, 
both $P_n$ and $\wP_n^+$ share the same LCS ranks and the same Betti numbers
as the corresponding direct product of free groups, $\Pi_n= \prod_{i=1}^{n-1}F_{i}$, see
\cite{Arnold69, Cohen-P-V-Wu08, FalkRandell, Kohno85}.  

\begin{prop}
\label{prop:theta4}
For each $n\geq 4$, the graded Lie algebras $\gr(P_n,\k)$, $\gr(\wP^+_n,\k)$, 
and $\gr(\Pi_n,\k)$ are pairwise non-isomorphic. 
\end{prop}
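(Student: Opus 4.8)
The plan is to distinguish the three graded Lie algebras by invariants that are already available in the excerpt, using the fact that all three groups are $1$-formal (hence filtered-formal), so that Corollary \ref{cor:noniosLie} applies: if the Chen ranks differ in some degree, the graded Lie algebras cannot be isomorphic. First I would record that $\gr(P_n,\k)$, $\gr(\wP_n^+,\k)$, and $\gr(\Pi_n,\k)$ all have the same Hilbert series (the LCS ranks $\phi_k$ agree for all three, by the references cited just before the statement), so this coarse invariant is useless and one genuinely needs a finer one. The Chen ranks $\theta_k$ are the natural candidate, and Theorem \ref{thm:ChenRanksLCSRanks} together with Example \ref{ex:gr-free} gives closed formulas for all three.

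The key computation is then purely arithmetic: compare $\theta_k(P_n)=(k-1)\binom{n+1}{4}$ (for $k\ge 3$), $\theta_k(\wP_n^+)=\binom{n+1}{4}+\sum_{i=3}^k\binom{n+i-2}{i+1}$ (for $k\ge 3$), and $\theta_k(\Pi_n)=\sum_{i=1}^{n-1}\theta_k(F_i)=\sum_{i=1}^{n-1}(k-1)\binom{i+k-2}{k}$ (for $k\ge 2$), using $\theta_k(\Pi_n)=\sum\theta_k(F_i)$ because Chen ranks are additive over direct products (the maximal metabelian quotient of a product is the product of maximal metabelian quotients, so the Chen Lie algebra of $\Pi_n$ is the direct sum of those of the $F_i$). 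Concretely I would exhibit a single degree, say $k=4$, where all three values are pairwise distinct for every $n\ge 4$: $\theta_4(P_n)=3\binom{n+1}{4}$, whereas $\theta_4(\wP_n^+)=\binom{n+1}{4}+\binom{n}{4}+\binom{n+1}{5}$, and $\theta_4(\Pi_n)=3\sum_{i=3}^{n-1}\binom{i+2}{4}=3\binom{n+2}{5}$ (using the hockey-stick identity). Then $\theta_4(P_n)\ne\theta_4(\Pi_n)$ reduces to $\binom{n+1}{4}\ne\binom{n+2}{5}$, i.e. $5\ne n-2$, which fails only at $n=7$; so for $n=7$ I would fall back to another degree (e.g. $k=5$, or compare $\theta_k$ asymptotically in $k$, where $\theta_k(P_n)$ grows linearly in $k$ while $\theta_k(\wP_n^+)$ and $\theta_k(\Pi_n)$ grow polynomially of higher degree) — in fact the cleanest uniform argument is to note that $\theta_k(P_n)$ is a linear polynomial in $k$ for $k\ge 3$, while $\theta_k(\Pi_n)$ and $\theta_k(\wP_n^+)$ are not (they grow like $k^{n-2}$ and like a polynomial of degree $\ge 2$ in $k$ respectively, for $n\ge 4$), which instantly separates $P_n$ from the other two, and then one degree-by-degree comparison separates $\wP_n^+$ from $\Pi_n$.

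The main obstacle I anticipate is purely bookkeeping: getting the hockey-stick summations and the ranges of validity of the $\theta_k$ formulas exactly right (the formulas in Theorem \ref{thm:ChenRanksLCSRanks} and Example \ref{ex:gr-free} hold only for $k\ge 3$ or $k\ge 2$, so the small-$k$ cases where $P_3\cong\wP_3^+\cong F_2\times F_1$ must be excluded, which is why the hypothesis is $n\ge 4$), and making sure that the ``growth rate in $k$'' argument is stated rigorously — e.g. by checking that the leading term of $\theta_k(\Pi_n)$ as a polynomial in $k$ is $\frac{(n-2)}{(n-1)!}k^{n-1}$ coming from the top free factor $F_{n-1}$, which has degree $n-1\ge 3>1$, while $\theta_k(P_n)=(k-1)\binom{n+1}{4}$ has degree $1$. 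Once the growth-degree dichotomy is in place, $\gr(P_n,\k)\not\cong\gr(\Pi_n,\k)$ and $\gr(P_n,\k)\not\cong\gr(\wP_n^+,\k)$ follow from Corollary \ref{cor:noniosLie}, and for $\gr(\Pi_n,\k)\not\cong\gr(\wP_n^+,\k)$ it suffices to produce one $k$ with $\theta_k(\Pi_n)\ne\theta_k(\wP_n^+)$; I expect $k=4$ to work for all $n\ge 4$ (comparing $3\binom{n+2}{5}$ with $\binom{n+1}{4}+\binom{n}{4}+\binom{n+1}{5}$), with $k=5$ as a backup if some sporadic coincidence occurs, and I would verify the absence of such a coincidence by a short polynomial-in-$n$ identity check.
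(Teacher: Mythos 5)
Your approach is exactly the paper's: compute the Chen ranks in degree $4$ from Theorem \ref{thm:ChenRanksLCSRanks} and Example \ref{ex:gr-free}, observe they are pairwise distinct for $n\ge 4$, and invoke Corollary \ref{cor:noniosLie}. The strategy is sound, but two arithmetic slips in your execution should be corrected. First, the formula $\theta_k(\wP_n^+)=\binom{n+1}{4}+\sum_{i=3}^{k}\binom{n+i-2}{i+1}$ gives, at $k=4$, the terms $\binom{n+1}{4}$ (for $i=3$) and $\binom{n+2}{5}$ (for $i=4$), so $\theta_4(\wP_n^+)=2\binom{n+1}{4}+\binom{n+2}{5}$, not $\binom{n+1}{4}+\binom{n}{4}+\binom{n+1}{5}$. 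Second, your worry about $n=7$ is spurious: since $\binom{n+2}{5}=\tfrac{n+2}{5}\binom{n+1}{4}$, the equality $\binom{n+1}{4}=\binom{n+2}{5}$ forces $n=3$, not $n=7$ (you computed the ratio as $\tfrac{n-2}{5}$). With the corrected values $\theta_4(P_n)=3\binom{n+1}{4}$, $\theta_4(\wP_n^+)=2\binom{n+1}{4}+\binom{n+2}{5}$, and $\theta_4(\Pi_n)=3\binom{n+2}{5}$, each pairwise coincidence reduces to $\binom{n+1}{4}=\binom{n+2}{5}$, which holds only at $n=3$ (consistent with $P_3\cong \wP_3^+\cong \Pi_3$); hence $k=4$ alone settles all of $n\ge 4$ and no fallback to $k=5$ or to the growth-rate argument is needed. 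Your backup observation that $\theta_k(P_n)$ is linear in $k$ while the other two grow faster is a valid (and robust) alternative for separating $P_n$ from the other two, though note that the degree of $\theta_k(\Pi_n)$ in $k$ is $n-2$, not $n-1$; and your additivity claim $\theta_k(\Pi_n)=\sum_{i=1}^{n-1}\theta_k(F_i)$ is correct, but the hockey-stick sum should start at $i=1$ (or $i=2$), since $\theta_4(F_2)=3\binom{4}{4}=3\neq 0$.
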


\begin{proof}
Using the computations recorded in Theorem \ref{thm:ChenRanksLCSRanks}, 
we find that $\theta_4(P_n)=3\binom{n+1}{4}$ and 
$\theta_{4}(\wP_n^+)= 2\binom{n+1}{4}+\binom{n+2}{5}$. 
Furthermore, the computation of K.-T.~Chen recorded in Example \ref{ex:gr-free} 
implies that $\theta_4(\Pi_n)=3\binom{n+2}{5}$, cf. \cite{Cohen-Suciu95}. 

Comparing these ranks and using Corollary \ref{cor:noniosLie} 
shows that the graded Lie algebras $\gr(P_n,\k)$, $\gr(\Pi_n,\k)$, and 
$\gr(\wP^+_n,\k)$ are pairwise non-isomorphic, as claimed
\end{proof}

This proposition recovers (in stronger form) the following 
result from \cite{SW-mccool}: For each $n\geq 4$, 
the groups $P_n$,  $\wP_n^+$, and $\Pi_n$ are 
pairwise non-isomorphic.   

\subsection{Almost-direct products}
\label{subsec:Almostdirect}

A semi-direct product of groups, $H \rtimes Q$, is called
an \emph{almost-direct product} of $H$ and $Q$,
if the action of $Q$ on $H$ induces a trivial action on 
the abelianization $H_{\ab}$, that is,
$qhq^{-1} \equiv h$ modulo $[H,H]$
for any $q\in Q$ and $h\in H$.

\begin{theorem}
Let $G=H \rtimes Q$ be a almost-direct product. Then, 
\begin{enumerate}
\item[(1)]    $\gr(G;\k)\cong \gr(H;\k) \rtimes \gr(Q;\k)$  as graded Lie algebras. 

\item[(2)]   $\widehat{\gr}(\k G)\cong \widehat{\gr}(\k H)\hat{\otimes}
\widehat{\gr}(\k Q)$ as graded vector spaces.
\end{enumerate}
\end{theorem}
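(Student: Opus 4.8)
The plan is to establish~(1) by a lower-central-series analysis of the split extension
$1\to H\xrightarrow{\iota} G\xrightarrow{p} Q\to 1$ (with section $s\colon Q\to G$), and then to deduce~(2) formally from~(1), using Quillen's identification $\gr(\k G)\cong U(\gr(G;\k))$ together with the Poincar\'e--Birkhoff--Witt theorem.

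For~(1), the key point is the classical lemma of Falk and Randell: for every $k\ge 1$ one has $p(\Gamma_k G)=\Gamma_k Q$ and, more importantly, $\Gamma_k G\cap H=\Gamma_k H$. I would prove the latter by induction on $k$, in the sharper form $\Gamma_k G=\Gamma_k H\cdot s(\Gamma_k Q)$: the right-hand side is a subgroup, since $\Gamma_k H$ is characteristic in $H$ and hence normal in $G$, and it meets $H$ precisely in $\Gamma_k H$ because $s(\Gamma_k Q)\cap\ker p=\{1\}$. The base case is the hypothesis $G=H\rtimes Q$; for the inductive step one writes $\Gamma_{k+1}G=[\Gamma_k G,G]$, expands it by the standard commutator identities $[ab,c]={}^{a}[b,c]\,[a,c]$ and $[a,bc]=[a,b]\,{}^{b}[a,c]$ into the four families of commutators built from $\Gamma_k H$, $s(\Gamma_k Q)$, $H$, and $s(Q)$, and checks that each such commutator, together with its $G$-conjugates, lies in $\Gamma_{k+1}H\cdot s(\Gamma_{k+1}Q)$. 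This last check is where the almost-direct hypothesis is indispensable, and it is the main obstacle: the hypothesis says exactly that every conjugation automorphism $\theta_q\in\Aut(H)$ ($q\in Q$) lies in the first stage $\mathcal{A}_1$ of the Johnson filtration $\Aut(H)\supseteq\mathcal{A}_1\supseteq\mathcal{A}_2\supseteq\cdots$, where $\mathcal{A}_m=\{\phi:\phi(y)y^{-1}\in\Gamma_{m+1}H\text{ for all }y\in H\}$; since that filtration is central (the classical fact $[\mathcal{A}_i,\mathcal{A}_j]\subseteq\mathcal{A}_{i+j}$) and $q\mapsto\theta_q$ is a homomorphism, $\theta_w$ lies in $\mathcal{A}_k$ for $w\in\Gamma_k Q$, and an automorphism in $\mathcal{A}_m$ raises lower-central weights by $m$, i.e.\ $\phi(x)x^{-1}\in\Gamma_{m+\ell}H$ for $x\in\Gamma_\ell H$. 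Feeding these facts into the commutator expansion makes all four families land where required. (Alternatively, one may simply invoke \cite{FalkRandell} for this lemma.)

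Granting the lemma, the inclusions $\Gamma_k H\hookrightarrow\Gamma_k G$ and the surjections $\Gamma_k G\twoheadrightarrow\Gamma_k Q$ descend to subquotients and assemble into short exact sequences of abelian groups $0\to\gr_k(H)\to\gr_k(G)\to\gr_k(Q)\to 0$, naturally split by $\gr_k(s)$; being split, they remain exact after $\otimes\,\k$. Hence $\gr(G;\k)=\gr(H;\k)\oplus\gr(Q;\k)$ as graded $\k$-vector spaces, with $\gr(\iota;\k)$ a monomorphism of graded Lie algebras onto the ideal $\ker\gr(p;\k)$ and $\gr(s;\k)$ a monomorphism of graded Lie algebras onto a subalgebra. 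This is precisely the statement $\gr(G;\k)\cong\gr(H;\k)\rtimes\gr(Q;\k)$, the action of $\gr(Q;\k)$ on $\gr(H;\k)$ being the restriction of the adjoint action (which one checks agrees with the linearization of the $Q$-action on $H$).

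For~(2), recall Quillen's isomorphisms of graded algebras $\gr(\k G)\cong U(\gr(G;\k))$, and similarly for $H$ and $Q$. By part~(1) and the Poincar\'e--Birkhoff--Witt theorem, ordering a homogeneous basis so that the $\gr(H;\k)$-part precedes the $\gr(Q;\k)$-part shows that the multiplication map $U(\gr(H;\k))\otimes U(\gr(Q;\k))\to U(\gr(H;\k)\rtimes\gr(Q;\k))$ is an isomorphism of graded vector spaces. Hence $\gr(\k G)\cong\gr(\k H)\otimes\gr(\k Q)$ as graded vector spaces; since $G$ is finitely generated, all graded pieces are finite-dimensional, so passing to degree completions yields $\widehat{\gr}(\k G)\cong\widehat{\gr}(\k H)\,\hat{\otimes}\,\widehat{\gr}(\k Q)$, as claimed.
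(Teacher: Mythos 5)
Your argument is correct, and it is worth noting that the paper itself disposes of both claims purely by citation: part (1) to \cite[Theorem (3.1)]{FalkRandell} and part (2) to \cite[Theorem 3.1]{Papadima02}. What you have written is essentially the content behind those citations. For (1) your route coincides with Falk--Randell's: the inductive identity $\Gamma_k G=\Gamma_k H\cdot s(\Gamma_k Q)$ is exactly their key lemma, and your packaging of the commutator estimates via the Andreadakis--Johnson filtration (the almost-direct hypothesis puts each $\theta_q$ in $\mathcal{A}_1$, centrality of that filtration pushes $\theta_w$ into $\mathcal{A}_k$ for $w\in\Gamma_k Q$, and elements of $\mathcal{A}_m$ raise lower-central weight by $m$) is a clean way to make all four families of commutators land in $\Gamma_{k+1}H\cdot s(\Gamma_{k+1}Q)$. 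For (2) you genuinely diverge from the paper: rather than quoting Papadima's result (an integral statement about the $I$-adic associated graded of the group ring of an almost-direct product), you deduce the splitting formally from (1) via Quillen's isomorphism $\gr(\k G)\cong U(\gr(G;\k))$ and the Poincar\'e--Birkhoff--Witt factorization $U(\fh\rtimes\mathfrak{q})\cong U(\fh)\otimes U(\mathfrak{q})$, then complete degreewise. Over a field of characteristic $0$ this is the more transparent argument and makes the dependence of (2) on (1) explicit; what the citation buys is a stronger (integral) statement and no need for the finite-dimensionality of the graded pieces that you invoke when identifying the degree completion of $\gr(\k H)\otimes\gr(\k Q)$ with $\widehat{\gr}(\k H)\,\hat{\otimes}\,\widehat{\gr}(\k Q)$ (harmless here, since $G$, and hence $Q$, is finitely generated and $H$ is in all the paper's applications).
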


\begin{proof}
The first claim follows from \cite[Theorem (3.1)]{FalkRandell}, while the second claim 
follows from  \cite[Theorem 3.1]{Papadima02}. 
\end{proof}

In general, an almost-direct product of $1$-formal groups need not 
be $1$-formal, or even filtered-formal.

\begin{example}
\label{ex:semiproduct}
Let $L$ be the link of $5$ great circles in $S^3$ corresponding to the 
arrangement of transverse planes through the origin of $\R^4$ 
denoted as $\mathcal{A}(31425)$ in Matei--Suciu \cite{MateiSuciu00}.  
The link group $G=\pi_1(S^3\setminus L)$ is isomorphic to the almost-direct 
product $F_4\rtimes_{\alpha} F_1$, where $\alpha=A_{1,3}A_{2,3}A_{2,4}\in P_4$. 

From \cite{SW-holo}, based on the work of Berceanu and Papadima 
\cite{Berceanu-Papadima94}, the group $G$ is graded-formal.  On the 
other hand, as noted by Dimca, Papadima, and Suciu in 
\cite[Example 8.2]{Dimca-Papadima-Suciu}, the Tangent Cone theorem 
does not hold for this group, and thus $G$ is not $1$-formal.
Consequently, $G$ is not filtered-formal.
\end{example}

\section{Faithful Taylor expansions and the RTFN property}
\label{sect:RTFN}

\subsection{Residually torsion free-nilpotent groups}
\label{subsec:rtfn}

A group $G$ is said to be \emph{residually torsion-free nilpotent}\/ 
(for short RTFN) if for any $g\in G$, $g\neq 1$,
there exists a torsion-free nilpotent group $Q$, and an epimorphism
$\psi\colon G\to Q$ such that $\psi(g)\neq 1$.
Equivalently,  $G$ is residually torsion-free nilpotent if and only if
$\bigcap_{k\geq 1} \tau_{k}G=\{1\}$, where 
\begin{equation}
\label{eq:tau-filtration}
\tau_{k}G=\{g\in G\mid \text{$g^n \in \Gamma_{k}G$, for some $n\in \N$} \}.
\end{equation}
For a group $G$, the property of being residually torsion-free nilpotent 
is inherited by all subgroups, and is preserved under direct products and free products.

By \cite[Ch.~VI, Thm.~2.26]{Passi79}, a group $G$ is residually 
torsion-free nilpotent if and only if the group-algebra 
$\k{G}$ is residually nilpotent, that is, $\bigcap_{k\geq 1}I^k=\{0\}$, 
where $I$ is the augmentation ideal. 
Therefore, if $G$ is finitely generated, the RTFN condition is 
equivalent to the injectivity of the canonical map to the prounipotent 
completion, $\kappa\colon G\to \fM(G,\k)$, where
recall $\fM(G,\k)$ is the set of group-like elements in $\widehat{\k G}$.

If $G$ is residually nilpotent and $\gr_{k} (G)$ is torsion-free for 
$k\ge 1$, then $G$ is residually torsion-free nilpotent. 
Residually torsion-free nilpotent implies residually nilpotent, which
in turn implies residually finite. 
Examples of residually torsion-free nilpotent groups include 
torsion-free nilpotent groups, free groups and surface groups;  
more examples will be discussed below. 
 
\subsection{Torelli groups}
\label{subsec:torelli}
 
Let $G$ be a finitely generated group, and let $\Aut(G)$ be its 
group of automorphisms.  The {\em Torelli group}\/ of $G$ is  
the subgroup of $\Aut(G)$ consisting of all automorphisms 
inducing the identity on abelianization; that is, 
\begin{equation}
\label{eq:iag}
\IA(G)= \ker (\Aut(G) \to \Aut(G/[G,G]).
\end{equation}

\begin{example}
Let $F_n$ be the free group of rank $n$, 
and let $\Z^n$ be its abelianization.   Identify the automorphism 
group $\Aut(\Z^n)$ with the general linear group $\GL_n(\Z)$.   
As is well-known, the map $\Aut(F_n)\to \GL_n (\Z)$ 
which sends an automorphism to the induced map on the
abelianization is surjective.      
The Torelli group $\IA(F_n)=\ker (\Aut(F_n) \surj \GL_n(\Z))$ is 
classically denoted by $\IA_n$.   Magnus showed that this group is 
finitely generated.  
Clearly, $\IA_1=\{1\}$, while, as noted by Magnus, 
$\IA_2=\Inn(F_2)\cong F_2$.  
On the other hand,  Krsti\'{c} and McCool showed  
that $\IA_3$ admits no finite presentation.  It is still unknown 
whether $\IA_n$ admits a finite presentation for $n\ge 4$.
\end{example}

\begin{example}
Let $\Sigma_g$ be a Riemann surface of genus $g$, 
and let $\mathcal{I}_g=\IA(\pi_1(\Sigma_g))$ be the associated 
Torelli group.   For $g \le 1$, the group $\mathcal{I}_g$ is trivial, 
while for $g=2$, it is not finitely generated.  On the other hand, 
 it is known that $\mathcal{I}_g$ is finitely generated for $n\ge 3$.
\end{example}

As noted by Hain \cite{Hain97} in the case of the Torelli group of 
a Riemann surface and proved by Berceanu and Papadima 
\cite{Berceanu-Papadima09} in full generality, a stronger 
assumption on $G$ leads to a stronger conclusion on $\IA(G)$.

\begin{theorem}[\cite{Hain97, Berceanu-Papadima09}]
\label{thm:hbp}
Let $G$ be a finitely generated, residually nilpotent group, and suppose 
$\gr_k(G)$ is torsion-free for all $k\ge 1$.  Then the Torelli group 
$\IA(G)$ is residually torsion-free nilpotent. 
\end{theorem}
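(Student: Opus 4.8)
The plan is to deduce Theorem~\ref{thm:hbp} from the residual torsion-free nilpotence of the free group together with an induced embedding of $\IA(G)$ into an automorphism group of a suitable nilpotent-type object built from $G$. The strategy hinges on the observation that an automorphism inducing the identity on $G_{\ab}$ acts, in a controlled way, on the lower central series quotients of $G$. First I would recall that since $G$ is finitely generated, residually nilpotent, and each $\gr_k(G)$ is torsion-free, the group $G$ is itself residually torsion-free nilpotent (as noted in \S\ref{subsec:rtfn}), so the canonical maps $G\to G/\Gamma_{k+1}G$ separate points and each quotient $N_k := G/\Gamma_{k+1}G$ is a finitely generated torsion-free nilpotent group. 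The key algebraic input is that $\bigcap_k \Gamma_{k+1}G = \{1\}$, which will let us detect nontrivial elements of $\IA(G)$ through their effect on the tower $\{N_k\}$.

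Second, I would set up the filtration on $\IA(G)$ by defining, for each $k\ge 1$, the subgroup $\mathcal{A}_k$ consisting of those automorphisms $\varphi$ of $G$ such that $\varphi(g)g^{-1}\in \Gamma_{k+1}G$ for all $g\in G$; thus $\mathcal{A}_1 = \IA(G)$. These subgroups are normal in $\Aut(G)$, they form a descending chain, and the main claim to establish is $\bigcap_{k\ge 1}\mathcal{A}_k = \{1\}$, which follows because $G$ is residually torsion-free nilpotent: if $\varphi \in \bigcap_k \mathcal{A}_k$ then $\varphi(g)g^{-1}\in \bigcap_k \Gamma_{k+1}G = \{1\}$ for every $g$, so $\varphi = \id$. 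The crucial structural step is then to analyze the successive quotients $\mathcal{A}_k/\mathcal{A}_{k+1}$: the assignment $\varphi \mapsto (g \mapsto \varphi(g)g^{-1} \bmod \Gamma_{k+2}G)$ should define an injective homomorphism from $\mathcal{A}_k/\mathcal{A}_{k+1}$ into $\hom(G_{\ab}, \gr_{k+1}(G))$, using that $\varphi(g)g^{-1}$ modulo $\Gamma_{k+2}G$ only depends on $g$ modulo $\Gamma_2 G$ (here torsion-freeness of the $\gr_k(G)$ ensures these Hom-groups are torsion-free abelian). Consequently each $\mathcal{A}_k/\mathcal{A}_{k+1}$ is a torsion-free abelian group, and one deduces that $\mathcal{A}_1/\mathcal{A}_{N}$ is torsion-free nilpotent for every $N$.

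Third, I would assemble these pieces: given $1\ne \varphi \in \IA(G)$, pick $N$ with $\varphi \notin \mathcal{A}_N$ (possible by the intersection being trivial), and note that the quotient $\IA(G)/\mathcal{A}_N$ is a finitely generated torsion-free nilpotent group in which the image of $\varphi$ is nontrivial; this exhibits $\varphi$ in a torsion-free nilpotent quotient, establishing the RTFN property for $\IA(G)$. The finite generation of $\IA(G)/\mathcal{A}_N$ needs the finite generation of $G_{\ab}$ together with finiteness of the ranks $\phi_k(G)$, both consequences of $G$ being finitely generated.

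The main obstacle, I expect, is proving that the filtration quotients $\mathcal{A}_k/\mathcal{A}_{k+1}$ are torsion-free — equivalently, that the Johnson-type homomorphism into $\hom(G_{\ab},\gr_{k+1}(G))$ is injective and lands in a torsion-free target. Injectivity of the Johnson homomorphism is essentially formal from the definition of $\mathcal{A}_k$, but verifying that $\varphi(g)g^{-1} \bmod \Gamma_{k+2}G$ is a \emph{homomorphism} in $g$ (i.e.\ respects products modulo the next lower central term) and depends only on $g \bmod \Gamma_2 G$ requires the standard but somewhat delicate commutator-collection identities in the lower central series, and the torsion-freeness of the codomain is exactly where the hypothesis that $\gr_k(G)$ is torsion-free for all $k$ is used in an essential way. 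I would expect the argument to follow the template of Andreadakis and Bass--Lubotzky for $\IA_n$, with $F_n$ replaced throughout by $G$ and the torsion-freeness hypothesis standing in for the freeness of $F_n$.
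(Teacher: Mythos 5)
The paper does not prove Theorem~\ref{thm:hbp}: it is imported verbatim from \cite{Hain97} and \cite{Berceanu-Papadima09}, so there is no internal proof to compare yours against. On its own merits, your argument is correct and is essentially the standard Andreadakis--Johnson filtration argument that underlies the cited proofs: the filtration $\mathcal{A}_k=\ker\bigl(\Aut(G)\to\Aut(G/\Gamma_{k+1}G)\bigr)$ is separated because $G$ is residually nilpotent, and the Johnson-type homomorphism embeds $\mathcal{A}_k/\mathcal{A}_{k+1}$ into $\hom(G_{\ab},\gr_{k+1}(G))$, which is torsion-free exactly because each $\gr_{k+1}(G)$ is. One point you gloss over: to conclude that $\IA(G)/\mathcal{A}_N$ is \emph{nilpotent} (not merely an iterated extension of torsion-free abelian groups) you must check that the filtration is central, i.e.\ $[\mathcal{A}_1,\mathcal{A}_k]\subseteq\mathcal{A}_{k+1}$. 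This follows from the same auxiliary lemma you already need for well-definedness of the Johnson map --- namely that $\varphi\in\mathcal{A}_i$ and $c\in\Gamma_jG$ imply $\varphi(c)c^{-1}\in\Gamma_{i+j}G$, proved by induction on $j$ using commutator identities --- but it should be stated explicitly, since without it the chain $\mathcal{A}_1\supseteq\mathcal{A}_2\supseteq\cdots$ is only a normal series with torsion-free abelian quotients. With that addition, the proof is complete; the finite-generation discussion at the end is harmless but unnecessary, as the paper's definition of residual torsion-free nilpotence does not require the nilpotent quotients to be finitely generated.
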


As shown by Magnus, all free group $F_n$ are residually torsion-free nilpotent. 
Hence, the Torelli groups $\IA(F_n)$ are  residually torsion-free nilpotent.  
Furthermore, all its subgroups, such as the pure braid group
$P_n$, the McCool group $wP_n$, and the 
upper McCool group $wP_n^+$ are also residually torsion-free nilpotent.
We refer to \cite{BB09, Marin12, SW-braids} for more details and references 
on this subject. 

\subsection{The RTFN property and Taylor expansions}
\label{subsec:rtfn-taylor}

The next result relates the RTFN property of a filtered-formal group 
to the injectivity of the corresponding Taylor expansion. 

\begin{prop}
\label{prop:fftaylor}
A finitely generated group $G$ has a faithful Taylor expansion 
if and only if $G$ is residually torsion-free nilpotent and filtered-formal. 
\end{prop}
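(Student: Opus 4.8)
The plan is to prove the equivalence in two directions, using the correspondence between Taylor expansions and filtered Lie algebra isomorphisms established in Theorem \ref{thm:expansionFiltered}, together with the characterization of the RTFN property via injectivity of the map $\kappa\colon G\to \fM(G,\k)$ into the Malcev group completion.

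First I would prove the forward direction: assume $G$ has a faithful Taylor expansion $E\colon G\to \widehat{\gr}(\k G)$. By Corollary \ref{cor:TaylorFilteredFormal}, the mere existence of a Taylor expansion gives filtered-formality, so that half is immediate. For the RTFN property, recall from Proposition \ref{prop:Taylor} that $E$ induces a filtration-preserving isomorphism of complete Hopf algebras $\widehat{E}\colon \widehat{\k G}\to \widehat{\gr}(\k G)$, and that $\bar{E}=\widehat{E}\circ \jmath$ where $\jmath\colon \k G\to \widehat{\k G}$ is the canonical map. Since $E$ is injective, the composite $G\inj \k G\xrightarrow{\jmath}\widehat{\k G}\xrightarrow{\widehat{E}}\widehat{\gr}(\k G)$ is injective; as $\widehat{E}$ is an isomorphism, the map $G\to \widehat{\k G}$, i.e. $\kappa$, is injective. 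By the discussion in \S\ref{subsec:rtfn} (Passi's theorem, \cite[Ch.~VI, Thm.~2.26]{Passi79}), injectivity of $\kappa$ for a finitely generated group is equivalent to $G$ being residually torsion-free nilpotent.

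Conversely, suppose $G$ is RTFN and filtered-formal. By Corollary \ref{cor:TaylorFilteredFormal} (or Theorem \ref{thm:expansionFiltered} together with Definition \ref{def:MalcevLie}), filtered-formality guarantees the existence of \emph{some} Taylor expansion $E\colon G\to \widehat{\gr}(\k G)$, which by Proposition \ref{prop:Taylor} induces a filtration-preserving Hopf algebra isomorphism $\widehat{E}\colon \widehat{\k G}\isom\widehat{\gr}(\k G)$. Now $\bar{E}(g)=\widehat{E}(\kappa(g))$ for $g\in G$, so $E=\widehat{E}\circ\kappa$ as maps $G\to\widehat{\gr}(\k G)$. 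Since $G$ is RTFN and finitely generated, Passi's theorem shows $\kappa\colon G\to \widehat{\k G}$ is injective; composing with the isomorphism $\widehat{E}$ preserves injectivity, so $E$ is a faithful Taylor expansion.

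The main subtlety — and the only place one must be a little careful — is the bookkeeping identifying $\kappa$ with the map $G\to\widehat{\k G}$ that appears in the factorization $\bar E=\widehat E\circ\jmath$, and making sure the definition of $\kappa$ as "inclusion into the group-like elements of $\widehat{\k G}$" matches the composite $G\inj\k G\to\widehat{\k G}$; this is essentially a triviality once unwound, since $\jmath$ sends $g\in G\subset\k G$ to its image in the completion, which is exactly $\kappa(g)$. There is no hard estimate or construction here: the statement is really a formal consequence of Theorem \ref{thm:expansionFiltered}, Proposition \ref{prop:Taylor}, and the Passi/Quillen characterization of RTFN groups, and the proof should be only a few lines.
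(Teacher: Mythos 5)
Your proposal is correct and follows essentially the same route as the paper's proof: both rest on Corollary \ref{cor:TaylorFilteredFormal} for the filtered-formality part, and on the factorization $E=\widehat{E}\circ\kappa$ (with $\widehat{E}$ an isomorphism by Proposition \ref{prop:Taylor}) together with the Passi/Quillen characterization of the RTFN property to identify faithfulness of $E$ with injectivity of $\kappa$. The paper merely states the two directions more compactly as a single ``$E$ is injective if and only if $\kappa$ is injective.''
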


\begin{proof}
By Corollary \ref{cor:TaylorFilteredFormal}, the group 
$G$ is filtered-formal if and only if there is 
a Taylor expansion $E\colon G\to \widehat{\gr}(\k G)$. 
In this case, by Propositions \ref{prop:Taylor} and \ref{prop:TaylorExp}, 
the map $\widehat{E}\colon \widehat{\k G}\to \widehat{\gr}(\k G)$ is an  
isomorphism of filtered Hopf algebras, which fits into the commuting diagram
\begin{equation}
\begin{gathered}
\xymatrix{
G \ar[r]^{\kappa}\ar[rd]_{E} &  \widehat{\k G} \ar[d]^{\widehat{E}}  \\
& \widehat{\gr}(\k G)\, .
}
\end{gathered}
\end{equation}
Hence, $E$ is injective if and only if $\kappa$ is injective.  
That is to say, the expansion $E$ is faithful if and only if 
the group $G$ is RTFN. 
\end{proof}

\begin{example}
\label{ex:braid-rtfn}
Consider the braid group $B_n$, with $n\ge 3$. Let us identify the complete Hopf algebra 
$\widehat{\gr}(\k B_n)$ with $\k\com{X}$, the power series ring over $\k$ in one variable.
The homomorphism $L\colon B_n\to \k\com{X}$ given by $L(\sigma_i)= \exp(X)$ 
is a Taylor expansion of the braid group, since 
$\log(\exp(\sigma_i)\exp(\sigma_j))=2X$ 
is a group-like element in $\k\com{X}$.
It is clear that this expansion is not faithful, since
$L([\sigma_1,\sigma_2])=0$ but $[\sigma_1,\sigma_2]\neq 1\in B_n$. 
In fact, it is known that the braid groups $B_n$ ($n\ge 3$) are 
not RTFN, see \cite{RolfsenZhu-98}.
\end{example}

\begin{ack}
We wish to thank Dror Bar-Natan for an inspiring conversation that led us to 
work on this project. 
\end{ack}

\newcommand{\arxiv}[1]
{\texttt{\href{http://arxiv.org/abs/#1}{arXiv:#1}}}
\newcommand{\arx}[1]
{\texttt{\href{http://arxiv.org/abs/#1}{arxiv:}}
\texttt{\href{http://arxiv.org/abs/#1}{#1}}}
\newcommand{\doi}[1]
{\texttt{\href{http://dx.doi.org/#1}{doi:#1}}}
\renewcommand*\MR[1]{%
\StrBefore{#1 }{ }[\temp]%
\href{http://www.ams.org/mathscinet-getitem?mr=\temp}{MR#1}}

\def\cprime{$'$}

\bibliographystyle{amsplain}

\end{document}